\theoremstyle{plain}
\newtheorem{lema}{Lemma}
\newtheorem{prop}[lema]{Proposition}
\newtheorem{teo}[lema]{Theorem}
\newtheorem{coro}[lema]{Corollary}
\theoremstyle{definition}
\newtheorem{defi}[lema]{Definition}
\newtheorem{obs}[lema]{Remark}
\newcommand{\multiline}[1]{%
  \begin{tabularx}{\dimexpr\linewidth-\ALG@thistlm}[t]{@{}X@{}}
    #1
  \end{tabularx}
}  
\newcommand{\m}{\mathrm{m}}
\begin{document}

\title[Weak harmonic labeling of graphs and multigraphs]{Weak harmonic labeling of graphs and multigraphs}

\author[P. L. Bonucci]{Pablo L. Bonucci}
\author[N. A. Capitelli]{Nicol\'as A. Capitelli\\
\textit{\scriptsize
U\MakeLowercase{niversidad} N\MakeLowercase{acional de} L\MakeLowercase{uj\'an}, D\MakeLowercase{epartamento de} C\MakeLowercase{iencias} B\MakeLowercase{\'asicas}, A\MakeLowercase{rgentina.}}}

\thanks{\textit{Corresponding address:} {\color{blue}ncapitelli@unlu.edu.ar}}

\subjclass[2020]{05C78, 05C63, 05C75, 05C85}

\keywords{Graph labeling, Harmonic functions on graphs, Harmonic labeling}

\thanks{\textit{This research was partially supported by the Department of Basic Sciences, UNLu (Disposici\'on CDD-CB 148/18) and the University of Luj\'an (Resoluci\'on PHCS 66/18)}}

\begin{abstract} In this article we introduce the notion of \emph{weak harmonic labeling} of a graph, a generalization of the concept of harmonic labeling defined recently by Benjamini et al. that allows extension to finite graphs and graphs with leaves. We present various families of examples and provide several constructions that extend a given weak harmonic labeling to larger graphs. In particular, we use finite weak models to produce new examples of (strong) harmonic labelings. As a main result, we provide a characterization of weakly labeled graphs in terms of \emph{harmonic subsets} of $\mathbb{Z}$ and use it to compute every such graphs of up to ten vertices. In particular, we characterize harmonically labeled graphs as defined by Benjamini et al. We further extend the definitions and main results to the case of multigraphs and total labelings.\end{abstract}

\maketitle

\section{Introduction}

The notion of harmonic labeling of an infinite (simple) graph was introduced recently by Benjamini, Cyr, Procaccia and Tessler in \cite{BCPT}. If $G=(V,E)$ is an infinite graph of bounded degree then an \textit{harmonic labeling} of $G$ is a bijective function $\ell:V\rightarrow \mathbb{Z}$ such that\begin{equation}\label{Def:IntroHL}\ell(v)=\frac{1}{\deg(v)}\sum_{\{v,w\}\in E}\ell(w)\end{equation} for every $v\in V$. In \cite{BCPT} the authors provide some examples of harmonic labelings and prove the existence of such labelings for regular trees and the lattices $\mathbb{Z}^d$ and the non-existence  for cylinders $G\times \mathbb{Z}$ for non-trivial $G$. Graph labeling is a widely developed topic and has a broad range of applications (see, e.g., \cite{BG1,BG2,Gallian}).

%Los tipos al principio del paper dicen :\textit{we hope that a more general structure will emerge, making this project the first step towards a wider theory}. Nosotros medio que hicimos eso. Hay que marcarlo. Algo así como \emph{this results in a more general structure which provides a far wider theory, as it was intented in \cite{BCPT}}.

%The link between harmonic functions and geometric properties of graphs is also well studied, see e.g.  [1], [2], [3].

Harmonically labellable graphs seem to have a rather restrictive configuration. Par\-ti\-cu\-lar\-ly, these graphs do not have leaves (pendant vertices) since there are no one to one functions verifying harmonicity on such vertices. Actually, this turns out to be the main obstacle for a generalization of this concept to the context of finite graphs, which is a natural extension taking into account the fruitful link between harmonic functions and geometric properties of finite graphs (see e.g. \cite[\S 4]{Lov}). Furthermore, finite examples might be useful as local models to produce new harmonically labeled (infinite) graphs.

In this paper we propose a two-way generalization of the notion of harmonic labeling, introducing the concept of \textit{weak harmonic labeling}. On one hand, we require satisfying equation \eqref{Def:IntroHL} only for $v\in V\setminus S$, where $S$ is the set of leaves of $G$. On the other hand, we let the function $\ell$ be a bijection with an integer interval $I$ (finite or infinite). These conditions permit a straighforward extension of harmonic labelings to the finite setting. This results in a more general structure which provides a far wider theory, which was one the of the ambitions in \cite{BCPT}.

%As a result we obtain not only a much richer (yet interesting) family of graphs to study thus providing a far more general framework for the theory

We present several examples of weak harmonic labelings and show the non-existence of this type of labelings for various families of (finite and infinite) graphs. We further introduce constructions to obtain new examples from given ones. In particular, we define the notion of inner cylinder and a way to extend any weakly labeled finite graph into an infinite one. We use weak finite models to construct new families of harmonic labelings. In particular, we exhibit a non-numerable collection of harmonically labeleled graphs, which additionally contains an infinite number of examples spanned by finite sets of vertices, thus answering a question raised in \cite{BCPT} (see Remark \ref{Remark:OpenProblems}).

%One of the main motivations for the introduction of weak harmonic labelings was the aim to use finite examples of weak harmonic labellable graphs in order to construct harmonic labellable infinte graphs as defined in \cite{BCPT}, thing that we do in several examples.

The main result of this article is the characterization of weakly  labeled graphs in terms of certain families of collections of finite subsets of $\mathbb{Z}$ called \emph{harmonic subsets}. Since the statement of this result without many preliminary conventions would be too lengthy, the reader is invited to turn to Lemma \ref{Lemma: 4 Properties} and Theorem \ref{Theorem:MAIN} for a first impression. This characterization provides a way to compute all weakly labeled graphs, thing which we do for graphs of up to ten vertices (see Appendix). In particular, we obtain a characterization of harmonically labeled graphs, as defined in \cite{BCPT}, in terms of the aforementioned harmonic subsets (Theorem \ref{Theorem:MAINMAIN}).

All the definitions and results of weak harmonic labelings can be extended to the case of multigraphs (or total labelings) in a straightforward way. We prove the version for multigraphs of Theorem \ref{Theorem:MAIN} and exhibit an algorithm that produces a total weak harmonic labeling from a given admissible labeling (see Algorithm \ref{algovalues}).

The paper is organized as follows. In Section 2 we introduce the concept of harmonic labeling and exhibit several examples of (families) of weakly labeled (finite and infinite) graphs.  In Section 3 we  present two constructions to obtain a new labelings from a given one and we use finite models of weakly  labeled graphs to construct new families of harmonically labeled graphs. In Section 4 we prove the characterization of weakly  labeled graphs (and, in particular, of harmonic labelings) in terms of families of collections of \emph{harmonic subsets of $\mathbb{Z}$}. In Section 5 we extend the definitions and main results of the theory to the case of multigraphs and total labelings. In the Appendix we have included the list of all possible weakly labeled graphs up to ten vertices.

%Harmonic functions over (finite or infinite) graphs and multigraphs play an important role in the study of problems concerning random walks, square tilings of rectangles and some concrete physical situations (see e.g. \cite{Lov}).

%This is a natural generalization, as there is a fruitful link between harmonic functions and geometric properties of finite graphs (see e.g. \cite{Lovasz}).

\section{Weak Harmonic Labelings of Simple Graphs}\label{Section:HL}

%Harmonic labelings are functions harmonic functions. Given a simple graph $G$ and $v\in V_G$,  a function $f:V\rightarrow \mathbb{R}$ is \emph{harmonic at $v$} if...

\textit{All graphs considered are connected, have bounded degree and at least three vertices}. For a simple graph $G$ we write $V_G$ for it set of vertices and $E_G$ for its set of edges. We put $v \sim w$ if $v$ and $w$ are adjacent and we let $N_G(v)=\{v\}\cup\{w\,:\, w\sim v\}\subset V_G$ denote the closed neighborhood of $v$.  Throughout, $S_G$ will denote the set of leaves (pendant vertices) of $G$ and $I$ will denote a generic integer interval (a set of consecutive integers).

%A standard reference for graph terminology is \cite{West}.

%Note that $I$ can be of one of the following types:\begin{itemize}\item $I=[s,t]=\{k\in\mathbb{Z}\,:\, s\leq k\leq t\}$ for $s\leq t\in\mathbb{Z}$,\item $I=[s,\infty]=\{k\in\mathbb{Z}\,:\, k\geq s\}$ or $I=[-\infty,s]=\{k\in\mathbb{Z}\,:\, k\leq s\}$ for $s\in\mathbb{Z}$), or\item $I=\mathbb{Z}$.\end{itemize}

\begin{obs}\label{Obs:dosverticesunonohoja} Note that, for any $G$, $v\sim w$ implies $\{v,w\}\cap (V_G\setminus S_G)\neq\emptyset$.\end{obs}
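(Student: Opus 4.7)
The plan is to prove the contrapositive: assume that both $v$ and $w$ lie in $S_G$ and that $v\sim w$, then derive a contradiction from the standing hypotheses (stated immediately above the remark) that $G$ is connected and has at least three vertices.

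Under the assumption $v,w\in S_G$ both vertices have degree $1$. Since $v\sim w$, the vertex $w$ is forced to be the unique neighbor of $v$ and, symmetrically, $v$ is the unique neighbor of $w$. Consequently, no third vertex of $G$ can be adjacent to $v$ or to $w$. By connectedness, every vertex $u\in V_G$ is joined to $v$ by some path; such a path must start with the edge $\{v,w\}$, but from $w$ it cannot continue because $w$ has no further neighbors. Hence $u\in\{v,w\}$, forcing $V_G=\{v,w\}$, which contradicts $|V_G|\geq 3$.

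There is no real obstacle here beyond making sure the two standing hypotheses are explicitly invoked; the rest is simply chasing the definition of a leaf.
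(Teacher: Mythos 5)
Your argument is correct and is exactly the intended justification: the paper states this as a remark without proof, relying on the standing hypotheses that $G$ is connected and has at least three vertices, which is precisely what you invoke. Nothing further is needed.
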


%Since harmonicity on a leaf forces its label to be equal to the label its sole adjacent vertex, then no one to one function can pretend harmonicity for both vertices. In view of this, it is reasonable to ask harmonicity only for the set of non-leaves. This is the same as asking the set of leave to be exactly the poles of the underlying harmonic function. 

%Since the value of an harmonic function on a leave must be equal to the value of its only neighbor, it is natural restriction to impose the requirement of harmonicity to the set $V\setminus S$. Note also that since the harmonicity of a function $\phi$ is trivially maintained by sum by a constant there is no loss in generalization assuming the codomain to be $[0,n]$ instead of $[s,t]$ for any $s\leq t\in\mathbb{Z}$.

\begin{defi}\label{Def:Harmonic} A \textit{weak harmonic labeling} of a graph $G$ (simply \emph{weak labeling} in this context) is a bijective function $\ell: V_G\rightarrow I$ such that \begin{equation}\label{Eq:HarmonicityCondition} \ell(v)=\frac{1}{deg(v)}\sum_{w\sim v}\ell(w)\hspace{0.2in}\forall v \in V_G\setminus S_G.
\end{equation} When we want to explicitate the interval of the labeling, we shall say \emph{weak harmonic labeling onto $I$}.
\end{defi}
As mentioned earlier, the relativeness to $V_G\setminus S_G$ of the harmonicity property is natural as there cannot be one to one functions with harmonic leaves. Harmonic labelings are particular cases of weak harmonic labelings since harmonically labellable infinite graphs have no leaves. More precisely, a weak harmonic labeling onto $I$ is an harmonic labeling if and only if $I=\mathbb{Z}$ and $S_G=\emptyset$.
%
%The leaves of $G$ constitute the set of poles of the underlying function.
%
%The relativeness of the harmonicity property to $V_G\setminus S_G$ is grounded by the simple fact that there cannot be a one to one function which is harmonic at a leaf.
%

\begin{obs} Since a function $\ell$ satisfies equation \eqref{Eq:HarmonicityCondition} if and only if $\pm \ell+k$ satifies it for any $k\in\mathbb{Z}$, we shall not distinguish between labelings obtained from translations or invertions. Thus, we make the convention that in the case $I\neq \mathbb{Z}$ we shall normalize all labelings to the intervals $[0,|V_G|-1]=\{k\in\mathbb{Z}\,:\, 0\leq k\leq |V_G|-1\}$ or $[0,\infty]=\{k\in\mathbb{Z}\,:\, k\geq 0\}$.\end{obs}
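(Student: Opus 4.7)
The remark packages two assertions: an invariance statement, that harmonicity of $\ell$ in the sense of (\ref{Eq:HarmonicityCondition}) is preserved under the transformations $\ell\mapsto \pm\ell + k$ for $k\in\mathbb{Z}$, and a normalization convention extracted from it. My plan is to address these in this order.

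For the invariance, the plan is to exploit the affine-linearity of the averaging operator on the right-hand side of (\ref{Eq:HarmonicityCondition}). Fixing $v\in V_G\setminus S_G$ and setting $\ell'=\alpha\ell+k$ with $\alpha\in\{+1,-1\}$, I would expand
\[
\frac{1}{\deg(v)}\sum_{w\sim v}\ell'(w)\;=\;\alpha\cdot\frac{1}{\deg(v)}\sum_{w\sim v}\ell(w)+k,
\]
using the fact that the constant $k$ is added exactly $\deg(v)$ times before the division. If $\ell$ is harmonic at $v$, this reduces to $\alpha\ell(v)+k=\ell'(v)$; the converse direction follows by applying the same computation to the inverse affine map $\ell\mapsto \alpha\ell'-\alpha k$ (noting that $\alpha^{-1}=\alpha$ for $\alpha=\pm 1$). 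Since $v$ is arbitrary in $V_G\setminus S_G$, the equivalence lifts to the whole labeling.

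For the normalization convention, the plan is to combine the invariance just established with the structure of integer intervals. Under the hypothesis $I\neq\mathbb{Z}$, the interval $I$ is either finite or one-sidedly infinite, hence in either case bounded below or bounded above. When $I$ is bounded below, setting $m=\min I$ and replacing $\ell$ by $\ell-m$ produces another weak harmonic labeling whose image is $[0,|V_G|-1]$ if $V_G$ is finite and $[0,\infty]$ otherwise. If $I$ is bounded above but not below, I would first apply $\ell\mapsto -\ell$ to reduce to the previous case and then translate. Bijectivity is preserved throughout since the transformations are affine bijections of $\mathbb{Z}$.

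No step presents a genuine obstacle; the remark is a purely formal consequence of the affine-linear form of (\ref{Eq:HarmonicityCondition}) together with the classification of integer intervals. The only mild point to verify is that the image of the normalized labeling is of the claimed form, which is immediate from the explicit translations used.
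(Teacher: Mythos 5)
Your proposal is correct, and the computation is exactly the one the paper implicitly relies on: the paper states this as an unproved remark precisely because the verification reduces to the affine-linearity of the averaging operator (the constant $k$ appears $\deg(v)$ times in the sum and survives the division) together with the observation that any integer interval other than $\mathbb{Z}$ can be carried onto $[0,|V_G|-1]$ or $[0,\infty]$ by a translation and possibly a sign change. Nothing in your argument deviates from or adds to what the authors intend, so there is nothing further to reconcile.
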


%When the $I$ is finite we have the notion of weak harmonic labeling of a finite graph. 
%\subsection{Examples on finite graphs}
%We first examine the finite setting. 
The simplest examples of weakly labeled finite graphs are the paths $P_n$ and the stars $K_{1,n}$ for even $n$ (Figure \ref{Figure:Combination}). Paths can be extended either to $\infty$ or to both $-\infty$ and $\infty$ to obtain a weak harmonic labeling onto $[0,\infty]$ or $\mathbb{Z}$ respectively. In the latter, we obtain the trivial harmonically labeled graph $\mathbb{Z}$. We invite the reader to check the Appendix for a numerous (concrete) examples of weakly labeled finite graphs, where additionally it can be verified that a given graph can admit more than one weak harmonic labeling.

\begin{figure}
\begin{center}
\begin{tikzpicture}[y=1.5cm, x=2.5cm, scale=0.6]
	\node (0)at(-7,0){};
	\filldraw[fill=green!80,draw=black!80] (0) circle (3pt)
	node[below=2pt] {\small $0$};
	\node (1)at(-6,0){};
	\filldraw[fill=red!80,draw=black!80] (1) circle (3pt)
	node[below=2pt] {\small $1$};
	\node (2)at(-5,0){};
	\filldraw[fill=red!80,draw=black!80] (2) circle (3pt)
	node[below=2pt] {\small $2$};
	\node (3)at(-4,0){};
	\filldraw[fill=red!80,draw=black!80] (3) circle (3pt)
	node[below=2pt] {\small $3$};
	\node (4)at(-3,0){};
	\filldraw[fill=red!80,draw=black!80] (4) circle (3pt)
	node[below=2pt] {\small $n-2$};
	\node (5)at(-2,0){};
	\filldraw[fill=green!80,draw=black!80] (5) circle (3pt)
	node[below=2pt] {\small $n-1$};
	\draw (1)--(0);
	\draw (1)--(2);
	\draw (3)-- (2);
	\draw (3)[dashed]--(4);
	\draw (5)--(4);

	\hspace{10mm}
	
\small	
	
\def \n {18}
\def \N {8}
\def \radius {3cm}
\def \rd {1mm}
\def \rer {10mm}

\def \margin {8} % margin in angles, depends on the radius

\node at (360:0mm) (ustar){};	
\filldraw[fill=red!80,draw=black!80] (ustar) circle (3pt)
node[below=2pt]{$\frac{n}{2}$};

%\node[fill=red!80,draw,circle] at (360:0mm) (ustar) {$\frac{n}{2}$};
\foreach \i [count=\ni from 0] in {\tiny{n},0,1,2,3}{
\node at ({120-\ni*25}:\radius) (u\ni){};
\filldraw[fill=green!80,draw=black!80]  (u\ni) circle (3pt)
node[above=2pt]{$\i$};

%\node[fill= green!80 ,draw, circle](3pt) at ({120-\ni*25}:\radius) (u\ni) {$\i$};
\node at ({115-\ni*18}:\radius/2) {};
\draw (ustar)--(u\ni);
}

\foreach \i in {1,3,...,11}{
  \node[circle] at ({-\i*20}:\radius) (aux) {\phantom{$w_{5}$}};
  \draw[dotted, shorten >=3mm, shorten <=3mm] (ustar)--(aux);
}

\draw[dotted,red] (18:\radius/2) arc[start angle=18, end angle=-228, radius=\radius/2];

\end{tikzpicture}
\end{center}
\caption{Weak harmonic labeling on $P_n$ and $K_{1,n}$.}
\label{Figure:Combination}
\end{figure}
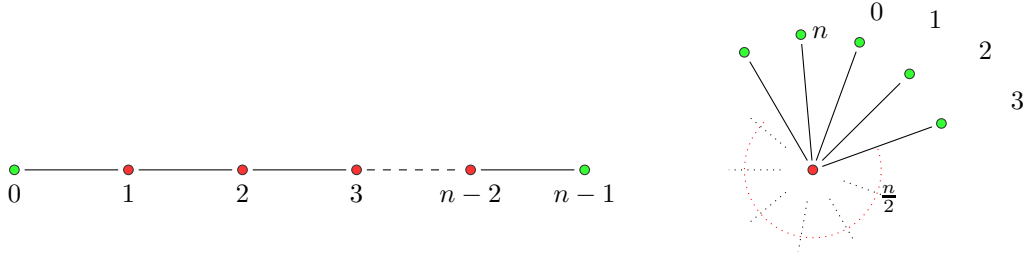

Note that the minimum and maximum values of a weak harmonic labeling over a finite $G$ must take place on leaves, so any finite graph with less than two leaves does not admit weak harmonic labelings. This is the analogue result that nonconstant harmonic funtions have at least two poles (see e.g. \cite[\S 4]{Lov}). In particular, cycles, complete graphs $K_n$ with $n\geq 3$, complete bipartite graphs $K_{n,m}$ with $n,m\geq 2$ and cylinders $G\times P_n$ for $n\geq 2$ and any $G$ do not admit a weak harmonic labeling. It is not hard to characterize finite graphs with maximum and minimum number of leaves which admit this type of labeling.

\begin{lema} Let $G$ be an $n$-vertex graph which admits a weak harmonic labeling.\begin{enumerate}
\item $G$ has two leaves if and only if $G=P_{n}$.
\item $G$ has $n-1$ leaves if and only if $n$ is even and $G=K_{1,n}$.\end{enumerate} \end{lema}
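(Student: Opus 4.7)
The plan is to handle both parts by first observing, via a maximum-principle argument, that the labels $0$ and $n-1$ of any weak harmonic labeling $\ell:V_G\to[0,n-1]$ must lie on leaves; indeed, at a non-leaf $v$ the value $\ell(v)$ is the average of its neighbors' pairwise distinct labels and therefore lies strictly between their minimum and maximum. The ``if'' directions of both items are witnessed by Figure \ref{Figure:Combination}, so in each case only the converse needs work.

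For (1), order the vertices as $v_0,v_1,\ldots,v_{n-1}$ with $\ell(v_i)=i$; the two leaves are then forced to be $v_0$ and $v_{n-1}$. The crux is an induction on $k$ showing that, for each $1\leq k\leq n-2$, the vertex $v_k$ has degree exactly $2$ with neighbors $v_{k-1}$ and $v_{k+1}$. In the inductive step the hypothesis already forces every $v_j$ with $j\leq k-2$ to have its two edges used up, so the neighbors of $v_k$ consist of $v_{k-1}$ together with vertices $v_{c_1},\ldots,v_{c_{d-1}}$ with $c_i\geq k+1$; harmonicity at $v_k$ then reads $c_1+\cdots+c_{d-1}=(d-1)k+1$, while distinctness of the $c_i$ forces the same sum to be at least $(d-1)k+\binom{d}{2}$. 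Comparing the two yields $d\leq 2$, hence $d=2$ and $c_1=k+1$. The base case $k=1$ runs identically, with the missing $v_{k-1}$ slot allowing the minimum neighbor label to be $0$. Once the claim is proved, $G$ is literally the path $v_0 v_1\cdots v_{n-1}$, i.e. $P_n$.

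For (2), connectedness together with the assumption that only one vertex $v^*$ is non-leaf forces all $n-1$ leaves to be adjacent to $v^*$, so $G$ is the star with center $v^*$. Harmonicity at $v^*$, combined with the fact that $\ell$ is a bijection onto $[0,n-1]$ (whose image sums to $n(n-1)/2$), yields $\ell(v^*)\cdot n = n(n-1)/2$, hence $\ell(v^*)=(n-1)/2$. Integrality of $\ell(v^*)$ is then equivalent to the stated parity condition on the star.

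The main obstacle will be the degree bound in the inductive step of part (1): the sum-of-distinct-integers estimate that rules out $\deg(v_k)\geq 3$ is what makes the bootstrap go through, and this is the only nontrivial ingredient. Once it is in hand, the path structure of (1) and the star computation of (2) both reduce to one-line calculations.
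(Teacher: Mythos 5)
Your proposal is correct and follows essentially the same route as the paper: the extreme labels sit on the two leaves, and an induction on $k$ forces $N_G(v_k)=\{v_{k-1},v_{k+1}\}$ by comparing the harmonicity identity at $v_k$ with a lower bound on the sum of its remaining neighbors' labels, yielding $\deg(v_k)\leq 2$. The only cosmetic differences are that you exploit distinctness of the labels to get the bound $(d-1)k+\binom{d}{2}$ where the paper simply uses that each remaining label is at least $k+1$, and that you write out part (2) (star structure plus the center-label computation $\ell(v^*)=(n-1)/2$), which the paper dismisses as the trivial implication.
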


\begin{proof} We prove the direct of ($1$), which is the only non-trivial implication. Let $\ell:V_G\rightarrow I$ be a weak harmonic labeling of $G$ and denote $v_i$ be the vertex labeled $i$. We may assume $n\geq 4$. By the previous remarks, $v_0$ and $v_n$ are the leaves of $G$. Since the vertex $v_1\notin S_G$  then $v_1\sim v_0$. Now
$$\deg(v_1)=\sum_{w \sim v_1}\ell(w)\geq \sum_{\substack{w \sim v_1 \\ w\neq v_0}}2=2(\deg(v_1)-1),$$ from where $\deg(v_1)= 2$. Therefore, $N_{G}(v_1)=\{v_0,v_2\}$. The same argument shows that $N_{G}(v_{n-1})=\{v_{n-2},v_n\}$. Assume inductively that $N_{G}(v_i)=\{v_{i-1},v_{i+1}\}$ for $0<i<k<n-1$. Then
$$\deg(v_k)k=\sum_{w \sim v_k}\ell(w)\geq (k-1)+\sum_{\substack{w \sim v_k \\ w\neq v_{k-1}}}(k+1)=k-1+(k+1)(\deg(v_k)-1),$$
and $\deg(v_k)\leq 2$. This proves that $N_{G}(v_k)=\{v_{k-1},v_{k+1}\}$ and hence $G=P_n$.\end{proof}

More general families of weakly labeled finite graphs are shown in Figure \ref{Figure:CkGeneralized}. Note that $P_n$ and $K_{1,n}$ ($n$ even) are extremal cases of the collection pictured in Figure \ref{Figure:CkGeneralized} (top). The non-acyclic family in Figure \ref{Figure:CkGeneralized} (bottom), which can be inferred from the examples in the Appendix, can be trivially extended to labelings onto $[0,\infty]$ and $\mathbb{Z}$. In the latter, we obtain again an harmonic labeling. Furthermore, another such labeling for this graph can be produced by adding the edges $\{\{2k-1,2k+1\}\,|\, k\in \mathbb{Z}\}$. These two examples are different from all those present in \cite{BCPT}, which evidences how new examples of harmonic labelings can be deduced from finite weakly labeled ones. We shall present more examples obtained in this fashion in the next section.

\begin{obs} Recall that the \emph{Laplacian} of a finite graph $G$ is the operator $L_G= D-A\in\mathbb{Z}^{n\times n}$ where $A$ is the adjacency matrix of $G$ and $D$ is the diagonal degree matrix. If we let $\tilde{L}_G$ denote the operator obtained from $L_G$ by removing the rows corresponding to leaves (the \emph{reduced Laplacian} of $G$) then $G$ admits a weak harmonic labeling if and only if there exists a permutation $\sigma\in S_n$ such that $\sigma(0,\ldots,n-1)\in ker(\tilde{L}_G)$.\end{obs}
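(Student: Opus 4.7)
The plan is to translate the definition of weak harmonic labeling directly into the matrix condition, treating the remark as essentially a dictionary between two languages. First I would fix an enumeration $v_1,\dots,v_n$ of $V_G$ and identify any function $\ell\colon V_G\to\mathbb{Z}$ with the column vector $(\ell(v_1),\dots,\ell(v_n))^T\in\mathbb{Z}^n$. Under this identification the $v$-th coordinate of $L_G\ell$ is exactly $\deg(v)\ell(v)-\sum_{w\sim v}\ell(w)$, so the harmonicity equation \eqref{Eq:HarmonicityCondition} at $v$ is equivalent to $(L_G\ell)_v=0$. Imposing this only for $v\in V_G\setminus S_G$ is then precisely the condition $\tilde L_G\ell=0$, i.e.\ $\ell\in\ker(\tilde L_G)$.

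Next I would use the normalization convention fixed in the preceding remark to reduce to the target interval $I=[0,n-1]$. This reduction is legitimate because the all-ones vector $\mathbf{1}$ satisfies $L_G\mathbf{1}=0$ (each row of $L_G$ sums to zero), so adding any integer constant to $\ell$ (as well as replacing $\ell$ by $-\ell$) preserves membership in $\ker(\tilde L_G)$. Hence a weak harmonic labeling onto an arbitrary integer interval of length $n$ gives, after normalization, a weak harmonic labeling onto $[0,n-1]$ lying in $\ker(\tilde L_G)$, and conversely.

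Finally I would observe that a bijection $\ell\colon V_G\to\{0,1,\dots,n-1\}$ corresponds, under the chosen enumeration of $V_G$, to a vector of the form $\sigma(0,1,\dots,n-1)^T$ where $\sigma\in S_n$ acts by permuting coordinates, and this correspondence is bijective. Combining the three steps yields the stated equivalence: $G$ admits a weak harmonic labeling if and only if some such permuted vector lies in $\ker(\tilde L_G)$.

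The argument is essentially a translation of definitions, so there is no real obstacle. The only point worth a short justification is the translation invariance invoked in the second paragraph, which is an immediate consequence of $\mathbf{1}\in\ker L_G\subseteq\ker\tilde L_G$.
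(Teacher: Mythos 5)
Your proof is correct and is exactly the intended argument: the paper states this as a remark without proof precisely because it is the direct dictionary between the harmonicity condition at non-leaf vertices and the vanishing of the corresponding rows of $L_G$, combined with the normalization convention that reduces any length-$n$ integer interval to $[0,n-1]$. The one point you single out for justification (translation invariance via $\mathbf{1}\in\ker L_G\subseteq\ker\tilde L_G$) is indeed the only step that is not purely notational, and you handle it correctly.
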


%The extension of $C^{k,h}$ to a labeled graph onto $\mathbb{Z}$ is shown in Figure \ref{Figure:Chkstrong}. 

%The graphs pictured in Figure \ref{Figure:Combination} are actually a particular case of the more general family portrayed in Figure \ref{Figure:CkGeneralized}. 

%For $m,n\in\mathbb{N}$ with $n$ odd, we let $P^{m,n}$ be the ($(n-1)m+1$)-vertex graph pictured in Figure \ref{Figure:CkGeneralized} (top). Now, for $0\leq k\leq \frac{(n-1)m}{2}$, the same  procedure that spans the family $P^k$ (sequentially removing the endpoints of the ``path" and placing them adjacent to the middle vertex) can be performed to $P^{m,n}$ to obtain the new examples $P^{m,n,k}$, which are pictured in Figure \ref{Figure:CkGeneralized} (bottom).

\begin{figure}

\begin{tikzpicture}[x=1.3cm, scale=0.5]
\small 
\def \n {18}
\def \N {8}
\def \radius {3cm}
\def \rd {1mm}
\def \rer {10mm}	
	\node (3)at(0,0){};
	\filldraw[fill=red!80,draw=black!80](3)circle (3pt);
	%\node [fill=red!80,draw,circle](3)at(0,0){};
	\draw (-3.5,1.5) node {$\dfrac{(n-1)m}{2}$};
	
%	\node [draw,circle](0)at(-9,0){};
%	\draw (-9,-1) node {$0$};
	\node (1)at(-8,0){};
	\filldraw[fill=green!80,draw=black!80](1)circle (3pt);
	%\node [fill=green!80,draw,circle](1) at(-8,0){};
	\draw (-8,-1) node {$km$};
	
	\node (2)at(-5,0){};
	\filldraw[fill=red!80,draw=black!80](2)circle (3pt);
	%\node [fill=red!80,draw,circle](2) at(-5,0){};
	\draw (-5,-1) node {$(k+1)m$};
	%\draw[->,dashed] (-5,0) to[bend right]  (-5,-1);

	\node (4)at(5,0){};
	\filldraw[fill=red!80,draw=black!80](4)circle (3pt);
	%\node [fill=red!80,draw,circle](4)at(5,0){};
	\draw (5,1) node {$(n-k-2)m$};
	%\draw[->,dashed] (8,0) to[bend left]  (8,-1);	
	
%	\node [draw,circle](6)at(9,0){};
%	\draw (9,-1) node {$(n-1)m$};
	%\draw[->,dashed] (8,0) to[bend left]  (8,-1);
	
	\node (5)at(8,0){};
	\filldraw[fill=green!80,draw=black!80](5)circle (3pt);
	%\node [fill=green!80,draw,circle](5)at(8,0){};
	\draw (8,-1) node {$(n-k-1)m$};
	%\draw[->,dashed] (5,0) to[bend left]  (5,1);
	
		%\node [draw,circle](6)at(2,-3){};
		%\draw (2,-4) node {$\frac{n+2}{2}$};
	
	%\node [draw,circle](1)at(-2,-3){};
	%\draw (-2,-4) node {$n-1$};
	\node (10)at(-2,3){};
	\filldraw[fill=green!80,draw=black!80](10)circle (3pt);
	%\node [fill=green!80,draw,circle](10)at(-2,3){};
     \draw (0,4.1) node {$s\not\equiv 0 (m)$};
     \draw (0,3.5) node {$\overbrace{\hspace{1in}}$};
     \draw (0,4.9) node {$s\in [0,(n-1)m]$};
     \node (11)at(2,3){};
	\filldraw[fill=green!80,draw=black!80](11)circle (3pt);
     %\node [fill=green!80,draw,circle](11)at(2,3){};

	%\draw (2,4) node {$\frac{n-2}{2}$};
	%\draw[->] (2,-3) to[bend right]  (2,-4);
	
	\node (12)at(-2,-3){};
	\filldraw[fill=green!80,draw=black!80](12)circle (3pt);
	%\node [fill=green!80,draw,circle](12)at(-2,-3){};
	\node (13)at(2,-3){};
	\filldraw[fill=green!80,draw=black!80](13)circle (3pt);
	%\node [fill=green!80,draw,circle](13)at(2,-3){};
	
     \draw (0,-4.1) node {$s=tm$};
     \draw (0,-3.5) node {$\underbrace{\hspace{1in}}$};
     \draw (0,-4.9) node {$t\in [0,k-1]\cup [n-k,n-1]$};	
	
	%\node [red] at(6,5) {core};
	%\node [red] at(7.9,0) {leaf};
	%\node [red] at(0.8,2.3) {stem};
	%
	%\draw [dashed, red] (3,3) circle (3);
	
	\draw[-,dotted] (-1.2,-2.5) to[bend right]  (1.2,-2.5);
	\draw[-,dotted] (-1.2,2.5) to[bend left]  (1.2,2.5);	
	
	\draw[->] (3) to[bend left] (-2.5,1);
	%\draw (1)--(2);
	%\draw  (0) to (1);
	\draw  (1) to (2);
	\draw[dashed]  (2) -- (3);
	\draw[dashed]  (3) -- (4);
	\draw  (4) to (5);
	%\draw  (5) to (6);
		\draw  (3) to (10);
			\draw  (3) to (11);
					\draw  (3) to (12);
							\draw  (3) to (13);
	\end{tikzpicture}	

\vspace{0.3in}	
	
	\begin{tikzpicture}[scale=0.7]
	\small 
	
	\node(0)at(0,0){};
	\filldraw[fill=green!80,draw=black!80] (0) circle (2pt)
	node[above=2pt] {$0$}; 
	\node (2)at(2 ,0){};	
	\filldraw[fill=red!80,draw=black!80] (2) circle (2pt)
	node[above=2pt] {$2$};
	\node (4)at(4,0){};	
	\filldraw[fill=red!80,draw=black!80] (4) circle (2pt)
	node[above=2pt] {$4$};
	\node (6)at(6,0){};	
	\filldraw[fill=red!80,draw=black!80] (6) circle (2pt)
	node[above=2pt] {$6$};
	\node (8)at( 8,0){};	
	\filldraw[fill=red!80,draw=black!80] (8) circle (2pt)
	node[above=2pt] {};
	\node (10)at(10,0){};	
	\filldraw[fill=red!80,draw=black!80] (10) circle (2pt)
	node[above=2pt] {};
	\node (12)at(12,0){};	
	\filldraw[fill=red!80,draw=black!80] (12) circle (2pt)
	node[above=2pt] {$2n-4$};	
	\node (14)at(14,0){};	
	\filldraw[fill=red!80,draw=black!80] (14) circle (2pt)
	node[above=2pt] {$2n-2$};
	\node (16)at(16,0){};	
	\filldraw[fill=green!80,draw=black!80] (16) circle (2pt)
	node[above=2pt] {$2n$};
	
	\node (1)at(1,-2){};	
	\filldraw[fill=green!80,draw=black!80] (1) circle (2pt)
	node[below=2pt] {$1$};	
	\node (3)at(3,-2){};	
	\filldraw[fill=red!80,draw=black!80] (3) circle (2pt)
	node[below=2pt] {$3$};
	\node (5)at(5,-2){};	
	\filldraw[fill=red!80,draw=black!80] (5) circle (2pt)
	node[below=2pt] {$5$};
	\node (7)at(7,-2){};	
	\filldraw[fill=red!80,draw=black!80] (7) circle (2pt)
	node[below=2pt] {};
	
	\node (11)at(11,-2){};	
	\filldraw[fill=red!80,draw=black!80] (11) circle (2pt)
	node[below=2pt] {};
	\node (13)at(13,-2){};	
	\filldraw[fill=red!80,draw=black!80] (13) circle (2pt)
	node[below=2pt] {$2n-3$};
	\node (15)at(15,-2){};	
	\filldraw[fill=green!80,draw=black!80] (15) circle (2pt)
	node[below=2pt] {\small $2n-1$};

	%\draw (1)to[bend right] (5);
	\draw (0) to (2);
	\draw (1) to (2);
	\draw (2) to (3);
	\draw (2) to (4);
	\draw (3) to (4);
	\draw (4) to (5);
	\draw (4) to (6);
	\draw (5) to (6);
	
	\draw (6) [dashed] -- (8);
	\draw (6) [dashed] -- (7);
	\draw (7) [dashed] -- (8);
	\draw (8) [dashed] -- (10);
	\draw (10) [dashed] -- (12);
	\draw (10) [dashed] -- (11);
	\draw (11) [dashed] -- (12);
	
	\draw (12) to (13);
	\draw (12) to (14);
	\draw (13) to (14);	
	\draw (14) to (15);
	\draw (14) to (16);

	\end{tikzpicture}
\normalsize
%\end{center}
%\end{minipage}

%\vspace{10mm}

\caption{\textbf{Top:} A collection of weakly labeled graphs for $m,n,k\in\mathbb{Z}_{\geq 0}$, $m \geq 1$, $n$ odd and $0\leq k\leq \frac{(n-1)m}{2}$. The graphs $P_n$ and $K_{1,n}$ ($n$ even) are extremal cases of this family for $m=1$. \textbf{Bottom:} A family of weakly labeled finite graphs than can additionally be extended to weakly labeled graphs onto $[0,\infty]$ and $\mathbb{Z}$.}
\label{Figure:CkGeneralized}
\end{figure}
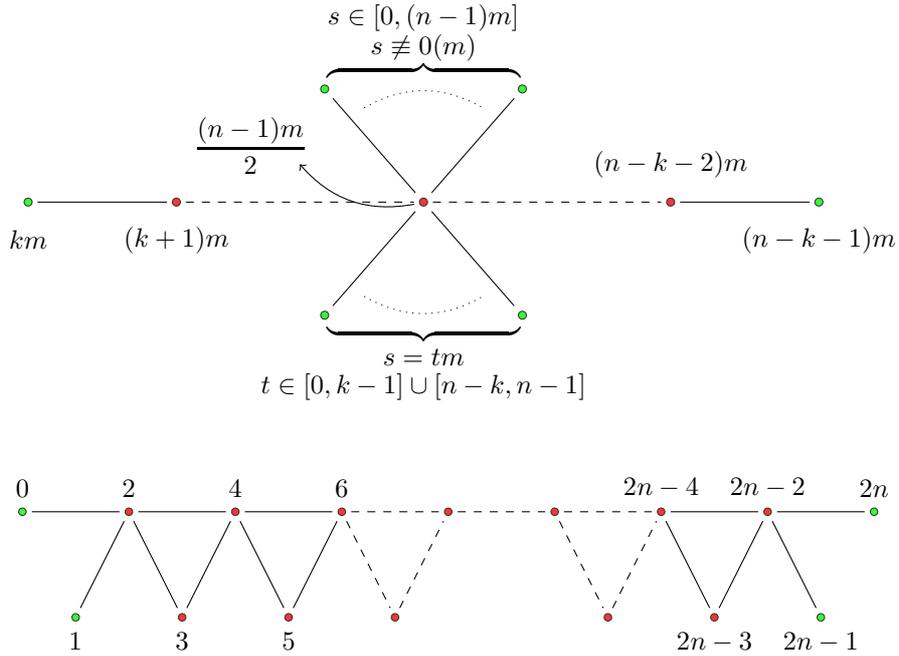

\section{Harmonic labelings from finite weak models}

%In the previous section we have evidenced how weakly labeled finite graphs can 

%In this section we present new families of harmonic labeled graphs (as defined in \cite{BCPT}) based on examples of finite weak harmonic models.  

More complex weakly labeled (finite and infinite) graphs can be built up from simpler finite examples. Some of these graphs can be inferred from the structure of the finite model and some can be constructed by performing unions and considering cylinders on them. In many cases, we shall obtain (new) harmonically labeled graphs.

%Furthermore, some of these (infinite) weak labelings can be extended to harmonic labelings (as defined in \cite{BCPT}), thus introducing new families of harmonic labaled graphs.

\subsection*{Coalescence and Inner Cylinders} We first show two constructions that produce new weakly labeled (finite and infinite) graphs from a finite weak model. Particularly, these constructions provide a way to produce infinitely many weak harmonic labelings onto $[0,\infty]$ and $\mathbb{Z}$.

For simple graphs $G,H$ and $v\in V_G$ and $w\in V_H$ we let $G\cdot_v^w H$ denote the graph obtained from $G\cup H$ by identifying the vertex $v$ with the vertex $w$ (this is sometimes referred by some authors as the \textit{coalescence} between $G$ and $H$ at vertices $v$ and $w$).

\begin{lema}\label{Lemma:identificacionhojas} Let $\ell_G:V_G\rightarrow [0,n-1]$ and $\ell_H:V_H\rightarrow I$, $I=[0,m-1]$ or $[0,\infty]$ be weak harmonic labelings on graphs $G$ and $H$ respectively. Let $v_i\in V_G$ be the vertex labeled $i$ in $G$ ($0\leq i \leq n-1$) and $w_j\in V_H$ be the vertex labeled $j$ in $H$ ($0\leq j \leq m-1$). If the sole vertex $v$ adjacent to $v_{n-1}$ in $G$ and the sole vertex $w$ adjacent to $w_0$ in $H$ satisfy $\ell_G(v)+\ell_H(w)=n-1$ then there exists a weak harmonic labeling of $G\cdot_{v_{n-1}}^{w_0} H$.\end{lema}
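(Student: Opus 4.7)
The plan is to build a candidate labeling $\ell$ on $G\cdot_{v_{n-1}}^{w_0}H$ by juxtaposing $\ell_G$ with a translated copy of $\ell_H$, and then to verify bijectivity and harmonicity separately. Concretely, I would set
\[
\ell(x) = \begin{cases} \ell_G(x) & \text{if } x \in V_G, \\ \ell_H(x) + (n-1) & \text{if } x \in V_H. \end{cases}
\]
This is well-defined at the identified vertex, since $\ell_G(v_{n-1}) = n-1 = \ell_H(w_0) + (n-1)$. Bijectivity onto $[0, n+m-2]$ (respectively $[0,\infty]$) is then immediate: $\ell_G$ covers $[0,n-1]$ and the shift of $\ell_H$ covers $[n-1, n+m-2]$ (respectively $[n-1,\infty]$), and these two images overlap precisely at $n-1$, the label of the single identified vertex.

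Next I would verify the harmonicity identity at every non-leaf of the coalescence. Since only the identified vertex changes degree under coalescence (from $1$ in each of $G$ and $H$ to $2$ in the union), the set of non-leaves of the coalescence is $(V_G\setminus S_G)\cup(V_H\setminus S_H)\cup\{v_{n-1}\}$. For a non-leaf $u$ of $G$, necessarily distinct from $v_{n-1}$ because $v_{n-1}\in S_G$, its neighborhood in the coalescence coincides with $N_G(u)$ and the $\ell$-label of every neighbor equals its $\ell_G$-label, so harmonicity at $u$ is inherited from $\ell_G$. For a non-leaf of $H$ distinct from $w_0$ the same argument applies, after observing that the harmonicity identity \eqref{Eq:HarmonicityCondition} is invariant under adding a common constant to every label, which here is $n-1$.

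The only genuinely new case, and the one the hypothesis is tailored to handle, is the identified vertex $v_{n-1}$. In the coalescence its neighborhood is exactly $\{v, w\}$, with $\ell$-labels $\ell_G(v)$ and $\ell_H(w) + (n-1)$, whose average equals
\[
\frac{\ell_G(v) + \ell_H(w) + (n-1)}{2} = \frac{(n-1) + (n-1)}{2} = n-1 = \ell(v_{n-1}),
\]
using the hypothesis $\ell_G(v)+\ell_H(w)=n-1$. There is no real obstacle in this argument: the compatibility condition between the two labelings is exactly what is needed so that gluing at extremal leaves produces a harmonic degree-$2$ vertex at the seam, while all other checks reduce to the fact that weak harmonicity is preserved under translation of the labels.
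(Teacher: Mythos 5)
Your proposal is correct and uses exactly the same labeling as the paper, namely $\ell=\ell_G$ on $V_G$ and $\ell=\ell_H+(n-1)$ on $V_H$; the paper simply states this formula and omits the verification that you carry out (well-definedness at the seam, bijectivity, inherited harmonicity away from the identified vertex, and the degree-$2$ average at $v_{n-1}$ via the hypothesis $\ell_G(v)+\ell_H(w)=n-1$). Your write-up is a fully checked version of the paper's one-line proof.
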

 \begin{proof} The desired weak harmonic labeling $\ell$ over $G\cdot_{v_{n-1}}^{w_0} H$ is given $$\ell(u)=\begin{cases}\ell_G(u)&u\in G\\ \ell_H(u)+n-1&u\in H.\end{cases}$$\end{proof}

% attained by joining by leaves some smaller examples. If $G$ and $H$ are harmonic labeled ($n+1$)-vertex graphs  we let $G\vee H$ denote the graph obtained from $G\cup H$ by identifying the vertex $v_n$ labeled $n$ in $G$ with the vertex $w_0$ labeled $0$ in $H$. If the sole vertex $v$ adjacent to $v_n$ in $G$ and the sole vertex $w$ adjacent to $w_0$ in $H$ satisfy $\ell_G(v)+\ell_H(w)=n$ 

The construction of Lemma \ref{Lemma:identificacionhojas} can be iterated to produce infinitely many new examples (both finite and infinite). Furthermore, \emph{any} weakly labeled graph can be extended to a new (finite or infinite) weakly labeled graph since the family of bipartite complete graphs $\{K_{1,n}\,:\,n\text{ even}\}$ has a member of average $k$ for each $k\in\mathbb{N}$. Figure \ref{Figure:Union} shows a particular example of this situation.

%Though producing many new examples, this construction does not allow to compute all possible harmonic labeled finite graphs of a given number of vertices (as the construction doubles the number of vertices).

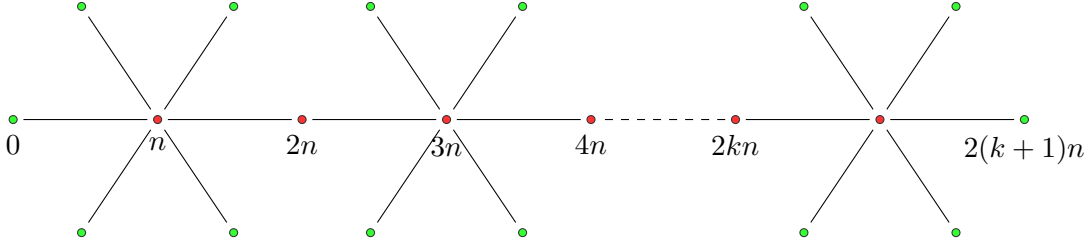
\begin{figure}
\vspace{5mm}
\begin{center}
\begin{tikzpicture}[scale=0.5]
	\node (3)at (0,0){};
	\filldraw[fill=red!80,draw=black!80](3) circle (3pt) 
	node[below=3pt] {$n$};
	%\node [draw,circle](3)at(0,0){\tiny{$\frac{n}{2}$}};
	
	\node (0)at (-3.8,0){};
	\filldraw[fill=green!80,draw=black!80](0) circle (3pt) 
	node[below=2pt] {$0$};
	%\node [draw,circle](0)at(-3.8,0){$0$};

	\node (5)at (3.8,0){};
	\filldraw[fill=red!80,draw=black!80](5) circle (3pt) 
	node[below=2pt] {$2n$};
	%\node [draw,circle](5)at(3.8,0){$n$};

	\node (6)at(2,-3){};
	\filldraw[fill=green!80,draw=black!80](6)circle (3pt);
	%\node [fill=green!80,draw,circle](6)at(2,-3){};
	\node (1)at(-2,-3){};
	\filldraw[fill=green!80,draw=black!80](1)circle (3pt);
	%\node [fill=green!80,draw,circle](1)at(-2,-3){};
	\node (2)at(-2,3){};
	\filldraw[fill=green!80,draw=black!80](2)circle (3pt);
	%\node [fill=green!80,draw,circle](2)at(-2,3){};
	\node (4)at(2,3){};
	\filldraw[fill=green!80,draw=black!80](4)circle (3pt);
	%\node [fill=green!80,draw,circle](4)at(2,3){};
	
	\node (7)at (7.6,0){};
	\filldraw[fill=red!80,draw=black!80](7) circle (3pt) 
	node[below=3pt] {$3n$};
	%\node [draw,circle](7)at(7.6,0){\tiny{$\frac{3}{2}n$}};
	
	\node (8)at (11.4,0){};
	\filldraw[fill=red!80,draw=black!80](8) circle (3pt) 
	node[below=2pt] {$4n$};	
	%\node [draw,circle](8)at(11.4,0){\tiny{$2n$}};
	\node (9)at(9.6,-3){};
	\filldraw[fill=green!80,draw=black!80](9)circle (3pt);
	\node (10)at(5.6,-3){};
	\filldraw[fill=green!80,draw=black!80](10)circle (3pt);		
	%\node [fill=green!80,draw,circle](2pt)(10)at(5.6,-3){};
	\node (11)at(5.6,3){};
	\filldraw[fill=green!80,draw=black!80](11)circle (3pt);
	%\node [fill=green!80,draw,circle](11)at(5.6,3){};
	\node (12)at(9.6,3){};
	\filldraw[fill=green!80,draw=black!80](12)circle (3pt);
	%\node [fill=green!80,draw,circle](12)at(9.6,3){};	
	
	\node (19)at (15.2,0){};
	\filldraw[fill=red!80,draw=black!80](19) circle (3pt) 
	node[below=2pt] {$2kn$};
	%\node [draw,circle](19)at(15.2,0){\tiny{$kn$}};
	
	\node (13) at (19,0){};
	\filldraw[fill=red!80,draw=black!80] (13)circle (3pt);

	\node(14)at(22.8,0){};
	\filldraw[fill=green!80,draw=black!80] (14)circle (3pt) 
	node[below=2pt] {$2(k+1)n$};
	\node (15)at(21,-3){};
	\filldraw[fill=green!80,draw=black!80](15)circle (3pt);
	%\node [fill=green!80,draw,circle](15)at(21,-3){};
	\node (16)at(17,-3){};
	\filldraw[fill=green!80,draw=black!80](16)circle (3pt);
	%\node [fill=green!80,draw,circle](16)at(17,-3){};
	\node (17)at(17,3){};
	\filldraw[fill=green!80,draw=black!80](17)circle (3pt);
	%\node [fill=green!80,draw,circle](17)at(17,3){};
	\node (18)at(21,3){};
	\filldraw[fill=green!80,draw=black!80](18)circle (3pt);
	%\node [fill=green!80,draw,circle](18)at(21,3){};	

	\draw  (13) to (14);
	\draw  (13) to (15);
	\draw  (13) to (16);
	\draw  (13) to (17);
	\draw  (13) to (18);
	
	\draw  (19)[dashed]-- (8);
	
	\draw  (13) to (19);
	
	\draw  (5)--(7);
	\draw  (7) to (8);
	\draw  (7) to (9);
	\draw  (7) to (10);
	\draw  (7) to (11);
	\draw  (7) to (12);

	\draw  (3) to (0);
	\draw  (3) to (1);
	\draw  (3) to (2);
	\draw  (3) to (4);
	\draw  (3) to (5);
	\draw  (3) to (6);
	\end{tikzpicture}
	\normalsize
\end{center}
\caption{Extending weak harmonic labelings through coalescense.}   \label{Figure:Union}
\end{figure}

%\subsection{Infinite graphs} We now turn our attention to infinite graphs. We begin with examples of weak harmonic labelings onto $[0,\infty]$. The canonical one is the infinite path $P_{\infty}$ (see Figure XX). Also, the iteration of infinite instances of the $\vee$ construction of page \pageref{vee_construction} produces many examples (see Figure XX).

%FIGURA ACA

%Note that the $T_k$ construction explained in page \pageref{T_k_construction} can be interated indefinitely to obtained an example of non-acyclic weak harmonic labellable graph onto $[0,\infty]$ (see Figure XX).

%FIGURA ACA.

%We now present examples of weak harmonic labelings onto $\mathbb{Z}$. 

The other aforementioned construction, which produces exclusively weak harmonic labelings onto $\mathbb{Z}$, is based on the notion of \emph{inner cylinder} of a graph. 

\begin{defi} Given a graph $G$, we define the \emph{inner cylinder of $G$} as the graph $G\breve{\times}\mathbb{Z}$ such that:\begin{itemize}
\item $V_{G\breve{\times}\mathbb{Z}}=\{(v,i)\,:\, v\in V_G, i\in \mathbb{Z}\}$
\item $(v,i)\sim (w,j)$ if and only if ($i=j$ and $v\sim w\in G$) or ($v=w\in V_G\setminus S_G$ and $i=j+1$ or $i=j-1$).
\end{itemize}\end{defi}
%
%An example of an inner cylinder construction is shown in Figure \ref{Figure:InnerCilinder}.
%
Interestingly, examples of weak harmonic labelings onto $\mathbb{Z}$ can be produced from \emph{any} finite example as the following lemma shows.

\begin{lema}\label{Lemma:ExtendInnerCilinder} A weak harmonic labeling on a finite graph $G$ induces a weak harmonic labeling onto $\mathbb{Z}$ on $G\breve{\times}\mathbb{Z}$.\end{lema}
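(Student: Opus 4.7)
The plan is to exhibit an explicit labeling of $G\breve{\times}\mathbb{Z}$ of the form $L(v,i)=\ell(v)+i\,n$, where $n=|V_G|$ and $\ell:V_G\rightarrow[0,n-1]$ is the given weak harmonic labeling, and then check the two required properties: bijectivity with $\mathbb{Z}$ and the harmonicity equation at every non-leaf vertex of $G\breve{\times}\mathbb{Z}$.

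First, I would observe that the leaves of $G\breve{\times}\mathbb{Z}$ are exactly the vertices $(v,i)$ with $v\in S_G$, since the second clause of the adjacency definition only creates vertical edges over non-leaves of $G$; thus a leaf $v$ of $G$ lifts to a vertex $(v,i)$ whose only neighbor is $(w,i)$, where $w$ is the unique neighbor of $v$ in $G$. Non-leaves $(v,i)$ have degree $\deg_G(v)+2$ (the original neighbors within level $i$ plus the two vertical neighbors $(v,i\pm1)$). Bijectivity of $L$ onto $\mathbb{Z}$ is the division algorithm: every integer $k$ writes uniquely as $k=r+qn$ with $0\le r\le n-1$, and $L(v,i)=k$ forces $\ell(v)=r$ and $i=q$, which determines $(v,i)$ uniquely since $\ell$ is a bijection.

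The harmonicity check at a non-leaf $(v,i)$ reduces to verifying
\[
(\deg_G(v)+2)\bigl(\ell(v)+in\bigr)=\sum_{w\sim v}\bigl(\ell(w)+in\bigr)+\bigl(\ell(v)+(i+1)n\bigr)+\bigl(\ell(v)+(i-1)n\bigr).
\]
The two vertical contributions combine to $2\ell(v)+2in$, the horizontal contributions give $\sum_{w\sim v}\ell(w)+in\deg_G(v)$, and the identity $\sum_{w\sim v}\ell(w)=\deg_G(v)\,\ell(v)$ (which is precisely the harmonicity of $\ell$ at $v\in V_G\setminus S_G$) makes both sides agree.

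There is really no obstacle here: the fact that the vertical edges are symmetric around $i$ means their labels $\ell(v)+(i\pm1)n$ average out to $\ell(v)+in$, so the added vertical degree does not disturb the original harmonic average, and the choice of step $n$ between consecutive levels is exactly what is needed to interleave the finite blocks $\ell(V_G)+in$ into a bijection with $\mathbb{Z}$.
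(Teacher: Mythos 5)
Your proof is correct and takes exactly the same route as the paper: the labeling $L(v,i)=\ell(v)+in$ is precisely the $\ell'(v,k)=\ell(v)+kn$ that the paper exhibits, the paper simply omitting the verification. Your added checks (that the leaves of $G\breve{\times}\mathbb{Z}$ are exactly the $(v,i)$ with $v\in S_G$, bijectivity via the division algorithm, and the cancellation of the two vertical contributions in the harmonicity equation) are all accurate.
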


\begin{proof} Write $|V_G|=n$ and let $\ell:V_G\rightarrow [0,n-1]$ be a weak harmonic labeling. Then, the claimed  labeling $\ell':V_{G\breve{\times}\mathbb{Z}}\rightarrow\mathbb{Z}$ over $G\breve{\times}\mathbb{Z}$ is given by
$$\ell'(v,k)=\ell(v)+kn.$$
\end{proof}

Figure \ref{Figure:InnerCilinder} (Top) shows examples of weak harmonic labelings onto $\mathbb{Z}$ defined using this construction. In some cases we can ``complete" these (weak) infinite examples to harmonic labelings. For instance, the weak harmonic labeling of $K_{1,2}\breve{\times}\mathbb{Z}$ and $K_{1,4}\breve{\times}\mathbb{Z}$ given in Lemma \ref{Lemma:ExtendInnerCilinder} can be extended to an harmonic labeling as it is shown in Figure \ref{Figure:InnerCilinder} (Bottom).

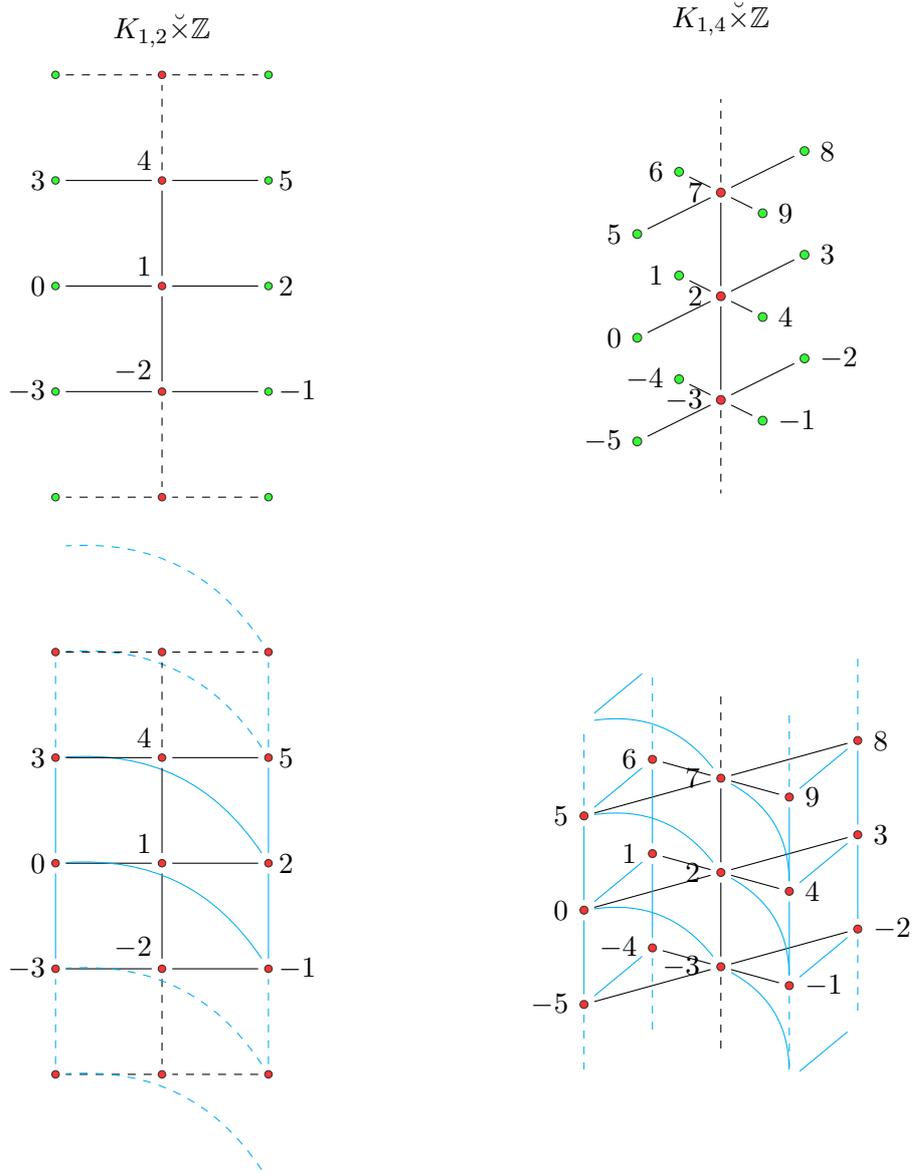
\begin{figure}
\begin{minipage}{0.49\textwidth}
\begin{center}
$K_{1,2}\breve{\times} \mathbb{Z}$\\
\begin{tikzpicture}[scale=0.7]

	\node(0)at(0,0){};
	\filldraw[fill=green!80,draw=black!80] (0) circle (2pt)
	node[left] {$0$}; 
	\node (1)at(2 ,0){};	
	\filldraw[fill=red!80,draw=black!80] (1) circle (2pt)
	node[anchor=south east] {$1$};
	\node (2)at(4,0){};	
	\filldraw[fill=green!80,draw=black!80] (2) circle (2pt)
	node[right] {$2$};
	
	\node (3)at(0,2){};	
	\filldraw[fill=green!80,draw=black!80] (3) circle (2pt)
	node[left] {$3$};
	\node (4)at( 2,2){};	
	\filldraw[fill=red!80,draw=black!80] (4) circle (2pt)
	node[anchor=south east] {$4$};
	\node (5)at(4,2){};	
	\filldraw[fill=green!80,draw=black!80] (5) circle (2pt)
	node[right] {$5$};
	
	\node (6)at(0,4){};	
	\filldraw[fill=green!80,draw=black!80] (6) circle (2pt)
	node[above=2pt] {};	
	\node (7)at(2,4){};	
	\filldraw[fill=red!80,draw=black!80] (7) circle (2pt)
	node[above=2pt] {};
	\node (8)at(4,4){};	
	\filldraw[fill=green!80,draw=black!80] (8) circle (2pt)
	node[above=2pt] {};
	
%	\node (9)at(0,6){};	
	%\filldraw[fill=green!80,draw=black!80] (9) circle (2pt)
	%node[below=2pt] {};	

	\node (-3)at(0,-2){};	
	\filldraw[fill=green!80,draw=black!80] (-3) circle (2pt)
	node[left] {$-3$};
	\node (-2)at(2,-2){};	
	\filldraw[fill=red!80,draw=black!80] (-2) circle (2pt)
	node[anchor=south east] {$-2$};
	\node (-1)at(4,-2){};	
	\filldraw[fill=green!80,draw=black!80] (-1) circle (2pt)
	node[right] {$-1$};
	
	\node (-6)at(0,-4){};	
	\filldraw[fill=green!80,draw=black!80] (-6) circle (2pt)
	node[below=2pt] {};
	\node (-5)at(2,-4){};	
	\filldraw[fill=red!80,draw=black!80] (-5) circle (2pt)
	node[below=2pt] {};
	\node (-4)at(4,-4){};	
	\filldraw[fill=green!80,draw=black!80] (-4) circle (2pt)
	node[below=2pt] {};
	
%	\node (-7)at(4,-6){};	
	%\filldraw[fill=red!80,draw=black!80] (-7) circle (2pt)
	%node[below=2pt] {};
	
	%\draw (1)to[bend right] (5);
	\draw (0) to (1);
	\draw (1) to (2);
	%\draw (2) [bend right, cyan]to (3);
	\draw (3) to (4);
	\draw (4) to (5);
	%\draw (5)[bend right, cyan, dashed]to  (6);
	
	\draw (1) to (4);
	\draw (1) to (-2);
	\draw (-2) [dashed] -- (-5);
	\draw (4) [dashed] -- (7);
	
	\draw (6) [dashed] -- (7);
	\draw (7) [dashed] -- (8);
	%\draw (8) [bend right, cyan, dashed]to (9);

	%\draw (-1) [bend right, cyan]to (0);
	\draw (-2) to (-1);
	\draw (-3) to (-2);	
	%\draw (-4) [bend right, cyan, dashed]to (-3);
	\draw (-4) [dashed] -- (-5);
	\draw (-5) [dashed] -- (-6);
	%\draw (-7)  [bend right, cyan, dashed]to (-6);
	
	%\draw (0) to (3);	
	%\draw (-3) to (0);
	%\draw (-6) [dashed]-- (-3);	
	%\draw (3) [dashed]-- (6);
	
	%\draw (-4)[dashed]--(-1);	
	%\draw (-1) to (2);
	%\draw (2) to (5);	
	%\draw (5) [dashed]-- (8);		

	\end{tikzpicture}

%\begin{tikzpicture}[y=0.5cm, x=0.5cm, scale=0.75]
%
%
%% k1,4
%
%
%	\node (0)at(-4,-2){};
%	\filldraw[fill=green!80,draw=black!80] (0) circle (3pt)    
%	node[left=2pt] {$0$};
%	\node(1)at(-2,1){};
%	\filldraw[fill=green!80,draw=black!80] (1) circle (3pt)   
%	 node[left=2pt] {$1$};
%	\node (2)at(0,0){};
%	\filldraw[fill=red!80,draw=black!80] (2) circle (3pt)   
%	 node[left=3pt] {$2$};
%	\node (3)at(4,2){};
%	\filldraw[fill=green!80,draw=black!80] (3) circle (3pt)   
%	 node[right=2pt] {$3$};
%	\node (4)at(2,-1){};
%	\filldraw[fill=green!80,draw=black!80] (4) circle (3pt)   
%	 node[right=2pt] {$4$};
%	
%	
%	\draw (2)to(0); 
%	\draw (2)to(1);
%	\draw (2)to(3);
%	\draw (2)to(4);
%	\end{tikzpicture}
	\end{center}
	\end{minipage}\begin{minipage}{0.49\textwidth}
	\begin{center}
		$K_{1,4}\breve{\times} \mathbb{Z}$\\
\begin{tikzpicture}[y=0.5cm, x=0.5cm, scale=0.55]

% k1,4 X Z

	\node (10)at(-4,8){};
	\node(11)at(-2,11){};
	\node (12)at(0,10){};
	\node (13)at(4,12){};
	\node (14)at(2,9){};

	\node (5)at(-4,3){};
	\filldraw[fill=green!80,draw=black!80] (5) circle (3pt)    
	node[left=2pt] {$5$};
	\node(6)at(-2,6){};
	\filldraw[fill=green!80,draw=black!80] (6) circle (3pt)   
	 node[left=2pt] {$6$};
	\node (7)at(0,5){};
	\filldraw[fill=red!80,draw=black!80] (7) circle (3pt)   
	 node[left=3pt] {$7$};
	\node (8)at(4,7){};
	\filldraw[fill=green!80,draw=black!80] (8) circle (3pt)   
	 node[right=2pt] {$8$};
	\node (9)at(2,4){};
	\filldraw[fill=green!80,draw=black!80] (9) circle (3pt)   
	 node[right=2pt] {$9$};

	\node (0)at(-4,-2){};
	\filldraw[fill=green!80,draw=black!80] (0) circle (3pt)    
	node[left=2pt] {$0$};
	\node(1)at(-2,1){};
	\filldraw[fill=green!80,draw=black!80] (1) circle (3pt)   
	 node[left=2pt] {$1$};
	\node (2)at(0,0){};
	\filldraw[fill=red!80,draw=black!80] (2) circle (3pt)   
	 node[left=3pt] {$2$};
	\node (3)at(4,2){};
	\filldraw[fill=green!80,draw=black!80] (3) circle (3pt)   
	 node[right=2pt] {$3$};
	\node (4)at(2,-1){};
	\filldraw[fill=green!80,draw=black!80] (4) circle (3pt)   
	 node[right=2pt] {$4$};
	 
\node (-5)at(-4,-7){};
	\filldraw[fill=green!80,draw=black!80] (-5) circle (3pt)    
	node[left=2pt] {$-5$};
	\node(-4)at(-2,-4){};
	\filldraw[fill=green!80,draw=black!80] (-4) circle (3pt)   
	 node[left=2pt] {$-4$};
	\node (-3)at(0,-5){};
	\filldraw[fill=red!80,draw=black!80] (-3) circle (3pt)   
	 node[left=3pt] {$-3$};
	\node (-2)at(4,-3){};
	\filldraw[fill=green!80,draw=black!80] (-2) circle (3pt)   
	 node[right=2pt] {$-2$};
	\node (-1)at(2,-6){};
	\filldraw[fill=green!80,draw=black!80] (-1) circle (3pt)   
	 node[right=2pt] {$-1$};

	\node (-6)at(2,-6){};
	\node (-7)at(4,-8){};
	\node (-8)at(0,-10){};
	\node (-9)at(-2,-9){};
	\node (-10)at(-4,-11){};
	
	\draw (-3)to(-1); 
	\draw (-3)to(-2);
	\draw (-3)to(-4);
	\draw (-3)to(-5);
	
	\draw (2)to(0); 
	\draw (2)to(1);
	\draw (2)to(3);
	\draw (2)to(4);
	
	\draw (7)to(5); 
	\draw (7)to(6);
	\draw (7)to(8);
	\draw (7)to(9);
	
	\draw (2)to(7); 
	\draw (2)to(-3); 
	
	\draw (7)[dashed]--(12); 
	\draw (-3)[dashed]--(-8);

	\end{tikzpicture}
	\end{center}
	\end{minipage}
	
	\begin{minipage}{0.49\textwidth}
\begin{center}
	\begin{tikzpicture}[scale=0.7]

	\node(0)at(0,0){};
	\filldraw[fill=red!80,draw=black!80] (0) circle (2pt)
	node[left] {$0$}; 
	\node (1)at(2 ,0){};	
	\filldraw[fill=red!80,draw=black!80] (1) circle (2pt)
	node[anchor=south east] {$1$};
	\node (2)at(4,0){};	
	\filldraw[fill=red!80,draw=black!80] (2) circle (2pt)
	node[right] {$2$};
	
	\node (3)at(0,2){};	
	\filldraw[fill=red!80,draw=black!80] (3) circle (2pt)
	node[left] {$3$};
	\node (4)at( 2,2){};	
	\filldraw[fill=red!80,draw=black!80] (4) circle (2pt)
	node[anchor=south east] {$4$};
	\node (5)at(4,2){};	
	\filldraw[fill=red!80,draw=black!80] (5) circle (2pt)
	node[right] {$5$};
	
	\node (6)at(0,4){};	
	\filldraw[fill=red!80,draw=black!80] (6) circle (2pt)
	node[above=2pt] {};	
	\node (7)at(2,4){};	
	\filldraw[fill=red!80,draw=black!80] (7) circle (2pt)
	node[above=2pt] {};
	\node (8)at(4,4){};	
	\filldraw[fill=red!80,draw=black!80] (8) circle (2pt)
	node[above=2pt] {};
	
	\node (9)at(0,6){};	
	%\filldraw[fill=green!80,draw=black!80] (9) circle (2pt)
	%node[below=2pt] {};	

	\node (-3)at(0,-2){};	
	\filldraw[fill=red!80,draw=black!80] (-3) circle (2pt)
	node[left] {$-3$};
	\node (-2)at(2,-2){};	
	\filldraw[fill=red!80,draw=black!80] (-2) circle (2pt)
	node[anchor=south east] {$-2$};
	\node (-1)at(4,-2){};	
	\filldraw[fill=red!80,draw=black!80] (-1) circle (2pt)
	node[right] {$-1$};
	
	\node (-6)at(0,-4){};	
	\filldraw[fill=red!80,draw=black!80] (-6) circle (2pt)
	node[below=2pt] {};
	\node (-5)at(2,-4){};	
	\filldraw[fill=red!80,draw=black!80] (-5) circle (2pt)
	node[below=2pt] {};
	\node (-4)at(4,-4){};	
	\filldraw[fill=red!80,draw=black!80] (-4) circle (2pt)
	node[below=2pt] {};
	
	\node (-7)at(4,-6){};	
	%\filldraw[fill=red!80,draw=black!80] (-7) circle (2pt)
	%node[below=2pt] {};
	
	%\draw (1)to[bend right] (5);
	\draw (0) to (1);
	\draw (1) to (2);
	\draw (2) [bend right, cyan]to (3);
	\draw (3) to (4);
	\draw (4) to (5);
	\draw (5)[bend right, cyan, dashed]to  (6);
	
	\draw (1) to (4);
	\draw (1) to (-2);
	\draw (-2) [dashed] -- (-5);
	\draw (4) [dashed] -- (7);
	
	\draw (6) [dashed] -- (7);
	\draw (7) [dashed] -- (8);
	\draw (8) [bend right, cyan, dashed]to (9);

	\draw (-1) [bend right, cyan]to (0);
	\draw (-2) to (-1);
	\draw (-3) to (-2);	
	\draw (-4) [bend right, cyan, dashed]to (-3);
	\draw (-4) [dashed] -- (-5);
	\draw (-5) [dashed] -- (-6);
	\draw (-7)  [bend right, cyan, dashed]to (-6);
	
	\draw [color=cyan](0) to (3);	
	\draw [color=cyan](-3) to (0);
	\draw [color=cyan](-6) [dashed]-- (-3);	
	\draw [color=cyan](3) [dashed]-- (6);
	
	\draw [color=cyan](-4)[dashed]--(-1);	
	\draw [color=cyan](-1) to (2);
	\draw [color=cyan](2) to (5);	
	\draw [color=cyan](5) [dashed]-- (8);

	\end{tikzpicture}
	\end{center}
\end{minipage}\begin{minipage}{0.49\textwidth}
\begin{center}
	\begin{tikzpicture}[y=0.5cm, x=0.9cm, scale=0.5]

% k1,4 X Z

	\node (10)at(-4,8){};
	\node(11)at(-2,11){};
	\node (12)at(0,10){};
	\node (13)at(4,12){};
	\node (14)at(2,9){};

	\node (5)at(-4,3){};
	\filldraw[fill=red!80,draw=black!80] (5) circle (3pt)    
	node[left=2pt] {$5$};
	\node(6)at(-2,6){};
	\filldraw[fill=red!80,draw=black!80] (6) circle (3pt)   
	 node[left=2pt] {$6$};
	\node (7)at(0,5){};
	\filldraw[fill=red!80,draw=black!80] (7) circle (3pt)   
	 node[left=4pt] {$7$};
	\node (8)at(4,7){};
	\filldraw[fill=red!80,draw=black!80] (8) circle (3pt)   
	 node[right=2pt] {$8$};
	\node (9)at(2,4){};
	\filldraw[fill=red!80,draw=black!80] (9) circle (3pt)   
	 node[right=2pt] {$9$};

	\node (0)at(-4,-2){};
	\filldraw[fill=red!80,draw=black!80] (0) circle (3pt)    
	node[left=2pt] {$0$};
	\node(1)at(-2,1){};
	\filldraw[fill=red!80,draw=black!80] (1) circle (3pt)   
	 node[left=2pt] {$1$};
	\node (2)at(0,0){};
	\filldraw[fill=red!80,draw=black!80] (2) circle (3pt)   
	 node[left=4pt] {$2$};
	\node (3)at(4,2){};
	\filldraw[fill=red!80,draw=black!80] (3) circle (3pt)   
	 node[right=2pt] {$3$};
	\node (4)at(2,-1){};
	\filldraw[fill=red!80,draw=black!80] (4) circle (3pt)   
	 node[right=2pt] {$4$};
	 
\node (-5)at(-4,-7){};
	\filldraw[fill=red!80,draw=black!80] (-5) circle (3pt)    
	node[left=2pt] {$-5$};
	\node(-4)at(-2,-4){};
	\filldraw[fill=red!80,draw=black!80] (-4) circle (3pt)   
	 node[left=2pt] {$-4$};
	\node (-3)at(0,-5){};
	\filldraw[fill=red!80,draw=black!80] (-3) circle (3pt)   
	 node[left=4pt] {$-3$};
	\node (-2)at(4,-3){};
	\filldraw[fill=red!80,draw=black!80] (-2) circle (3pt)   
	 node[right=2pt] {$-2$};
	\node (-1)at(2,-6){};
	\filldraw[fill=red!80,draw=black!80] (-1) circle (3pt)   
	 node[right=2pt] {$-1$};

	\node (-6)at(2,-11){};
	\node (-7)at(4,-8){};
	\node (-8)at(0,-10){};
	\node (-9)at(-2,-9){};
	\node (-10)at(-4,-11){};

	\draw (-3)to(-1); 
	\draw (-3)to(-2);
	\draw (-3)to(-4);
	\draw (-3)to(-5);
	\draw [color=cyan](-2)to(-1); 
	\draw [color=cyan](-4)to(-5);
	
	\draw (2)to(0); 
	\draw (2)to(1);
	\draw (2)to(3);
	\draw (2)to(4);
	\draw [color=cyan](0)to(1); 
	\draw [color=cyan](3)to(4);
	
	\draw (7)to(5); 
	\draw (7)to(6);
	\draw (7)to(8);
	\draw (7)to(9);
	\draw [color=cyan](5)to(6); 
	\draw [color=cyan](8)to(9);
	
	\draw (2)to(7); 
	\draw (2)to(-3); 
	\draw [color=cyan](0)to(5); 
	\draw [color=cyan](0)to(-5);
	\draw [color=cyan](1)to(6); 
	\draw [color=cyan](1)to(-4);
	\draw [color=cyan](3)to(8); 
	\draw [color=cyan](3)to(-2);
	\draw [color=cyan](4)to(9); 
	\draw [color=cyan](4)to(-1);

	\draw (7)[dashed]--(12); 
	\draw (-3)[dashed]--(-8); 
	\draw [color=cyan](5)[dashed]--(10); 
	\draw [color=cyan](6)[dashed]--(11); 
	\draw [color=cyan](8)[dashed]--(13); 
	\draw [color=cyan](9)[dashed]--(14); 
	\draw [color=cyan](-2)[dashed]--(-7); 
	\draw [color=cyan](-1)[dashed]--(-6); 
	\draw [color=cyan](-4)[dashed]--(-9); 
	\draw [color=cyan](-5)[dashed]--(-10);

	\draw[color=cyan] (10) to[bend left] (7);
	\draw[color=cyan] (7) to[bend left] (4);
	\draw[color=cyan] (5) to[bend left] (2);
	\draw[color=cyan] (2) to[bend left] (-1);
	\draw[color=cyan] (0) to[bend left] (-3);
	\draw[color=cyan] (-3) to[bend left] (-6);
	\draw[color=cyan] (10) to (11);
	\draw[color=cyan] (-6) to (-7);
	\end{tikzpicture}
		\end{center}
\end{minipage}
	\caption{\textbf{Top.} The weak harmonic labeling induced in the inner cylinder of $K_{1,2}$ (left) and $K_{1,4}$ (right). \textbf{Bottom.} Harmonic labeling from the weak labeling of $K_{1,2}\breve{\times}\mathbb{Z}$ (left) and $K_{1,4}\breve{\times}\mathbb{Z}$ (right). The cyan colored edges represent added edges to the original weak labelings.}   \label{Figure:InnerCilinder}	
	\end{figure}

\subsection*{Labelings inferred from finite models} The weakly labeled graph in Figure \ref{Figure:CkGeneralized} (bottom) is a particular case of the family portrayed in Figure \ref{Figure:cuadriculado}, which we call $C^{k,h}$. We note that this collection can too be extended to $[0,\infty]$ and $\mathbb{Z}$, and that this last extension produces an harmonically labeled graph, $C^{k,\infty}$. Formally, $V_{C^{k,\infty}}=\mathbb{Z}$ and $E_{C^{k,\infty}}=\{\{a,b\}\,:\,b=a-1,a+1,a+k,a-k\}$. This new example of harmonic labeling is indeed part of a far more general family. Note that for $b\nsim a$ we can add the edges $(s+1)(b-a)+a\sim s(b-a)+a$ for each $s\in\mathbb{Z}$ and obtain a new harmonically labeled graph (see Figure \ref{Figure:ChkstrongCyan}). We can repeat this process to the newly generated example to obtain infinitely many new ones (a different for each edges selected for addition and each $k$). We make this construction precise next.

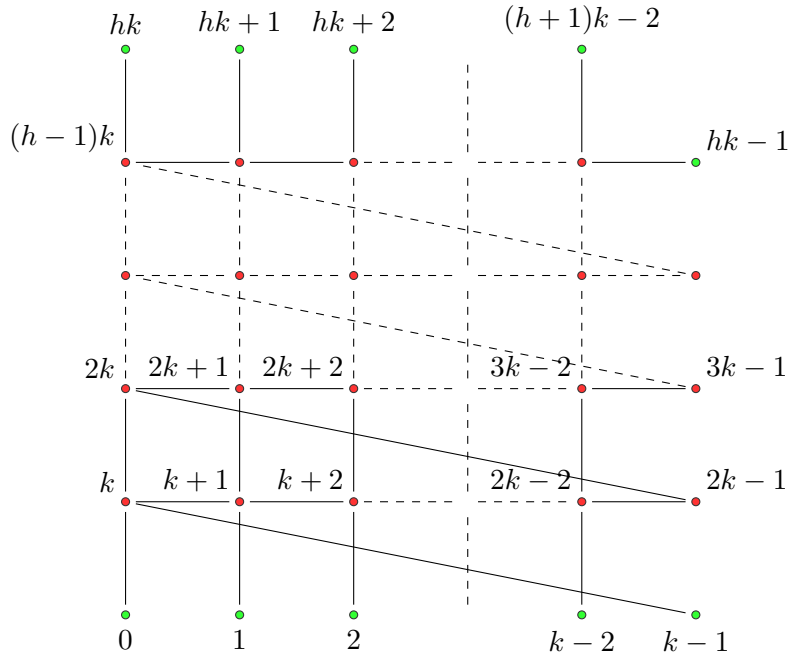
\begin{figure}
\begin{tikzpicture}[scale=0.75]

	\node(0)at(0,0){};
	\filldraw[fill=green!80,draw=black!80] (0) circle (2pt)
	node[below=2pt] {$0$}; 
	\node (1)at(2,0){};	
	\filldraw[fill=green!80,draw=black!80] (1) circle (2pt)
	node[below=2pt] {$1$}; 
	\node (2)at(4,0){};
	\filldraw[fill=green!80,draw=black!80] (2) circle (2pt)
	node[below=2pt] {$2$}; 
	\node (3)at(6,0){};
	\node(4)at(8,0){};
	\filldraw[fill=green!80,draw=black!80] (4) circle (2pt)
	node[below=2pt] {$k-2$}; 
	\node (5)at(10,0){};
	\filldraw[fill=green!80,draw=black!80] (5) circle (2pt)
	node[below=2pt] {$k-1$};

	\node (6)at(0,2){};
	\filldraw[fill=red!80,draw=black!80] (6) circle (2pt)
	node[anchor=south east] {$k$}; 
	\node(7)at(2,2){};
	\filldraw[fill=red!80,draw=black!80] (7) circle (2pt)
	node[anchor=south east] {$k+1$}; 	
	\node(8)at(4,2){};
	\filldraw[fill=red!80,draw=black!80] (8) circle (2pt)
	node[anchor=south east] {$k+2$}; 	
	\node (9)at(6,2){};
	\node (10)at(8,2){};
	\filldraw[fill=red!80,draw=black!80] (10) circle (2pt)
	node[anchor=south east] {$2k-2$};
	\node (11)at(10,2){};
	\filldraw[fill=red!80,draw=black!80] (11) circle (2pt)
	node[anchor=south west] {$2k-1$};

	\node (12)at(0,4){};
	\filldraw[fill=red!80,draw=black!80] (12) circle (2pt)
	node[anchor=south east] {$2k$};
	\node (13)at(2,4){};
	\filldraw[fill=red!80,draw=black!80] (13) circle (2pt)
	node[anchor=south east] {$2k+1$};	
	\node(14)at(4,4){};
	\filldraw[fill=red!80,draw=black!80] (14) circle (2pt)
	node[anchor=south east] {$2k+2$};
	\node (15)at(6,4){};
	\node (16)at(8,4){};
	\filldraw[fill=red!80,draw=black!80] (16) circle (2pt)
	node[anchor=south east] {$3k-2$};
	\node (17)at(10,4){};
	\filldraw[fill=red!80,draw=black!80] (17) circle (2pt)
	node[anchor=south west] {$3k-1$};
	
	\node (18)at(0,6){};
	\filldraw[fill=red!80,draw=black!80] (18) circle (2pt)
	node[anchor=south east] {};
	\node(19)at(2,6){};
	\filldraw[fill=red!80,draw=black!80] (19) circle (2pt)
	node[anchor=south east] {};	
	\node(20)at(4,6){};
	\filldraw[fill=red!80,draw=black!80] (20) circle (2pt)
	node[anchor=south east] {};
	\node (21)at(6,6){};
	\node (22)at(8,6){};
	\filldraw[fill=red!80,draw=black!80] (22) circle (2pt)
	node[anchor=south east] {};
	\node(23)at(10,6){};
	\filldraw[fill=red!80,draw=black!80] (23) circle (2pt)
	node[anchor=south west] {};

	\node (24)at(0,8){};
	\filldraw[fill=red!80,draw=black!80] (24) circle (2pt)
	node[anchor=south east] {$(h-1)k$};
	\node(25)at(2,8){};
	\filldraw[fill=red!80,draw=black!80] (25) circle (2pt)
	node[above=2pt] {};	
	\node(26)at(4,8){};
	\filldraw[fill=red!80,draw=black!80] (26) circle (2pt)
	node[above=2pt] {};
	\node (27)at(6,8){};
	\node (28)at(8,8){};
	\filldraw[fill=red!80,draw=black!80] (28) circle (2pt)
	node[above=2pt] {};
	\node (29)at(10,8){};
	\filldraw[fill=green!80,draw=black!80] (29) circle (2pt)
	node[anchor=south west] {$hk-1$};	
	
	\node (30)at(0,10){};
	\filldraw[fill=green!80,draw=black!80] (30) circle (2pt)
	node[above=2pt] {$hk$};
	\node(31)at(2,10){};
	\filldraw[fill=green!80,draw=black!80] (31) circle (2pt)
	node[above=2pt] {$hk+1$};	
	\node(32)at(4,10){};
	\filldraw[fill=green!80,draw=black!80] (32) circle (2pt)
	node[above=2pt] {$hk+2$};
	\node (33)at(6,10){};
	\node (34)at(8,10){};
	\filldraw[fill=green!80,draw=black!80] (34) circle (2pt)
	node[above=2pt] {$(h+1)k-2$};
	
	%\draw (1)to[bend right] (5);
	\draw (0) to (6);
	\draw (6) to (12);
	\draw (12) [dashed] -- (18);

	\draw (1) to (7);
	\draw (7) to (13);
    \draw (13) [dashed] -- (19); 
      	
	\draw (2) to (8);
	\draw (8) to (14);
	\draw (14) [dashed] -- (20);
	
	\draw (3) [dashed] -- (9);
	\draw (9) [dashed] -- (15);
	\draw (15)[dashed] -- (21);
	
	\draw (4) to (10);
	\draw (10) to (16);
	\draw (16) [dashed] -- (22);	
	
	\draw (5) to (6);
	\draw (11) to (12);
	%\draw (16) to (2);	
	
	\draw (6) to (7);
	\draw (7) to (8);
	\draw (8) [dashed] -- (9);
	\draw (9) [dashed] -- (10);
	\draw (10) to (11);
	
	\draw (12) to (13);
	\draw (13) to (14);
	\draw (14) [dashed]--(15);
	\draw (15) [dashed] --(16);
	\draw (16) to (17);
	
	\draw (24) -- (25);
	\draw (25) -- (26);
	\draw (26) [dashed]-- (27);
	\draw (27) [dashed]-- (28);
	\draw (28) -- (29);	
	
	\draw (18) [dashed] -- (19);
	\draw (19) [dashed] -- (20);
	\draw (20) [dashed]-- (21);
	\draw (21) [dashed]-- (22);	
	\draw (22) [dashed] -- (23);	
	
	\draw (18) [dashed] -- (24);
	\draw (19) [dashed] -- (25);
	\draw (20) [dashed] -- (26);
	\draw (21) [dashed]-- (27);
	\draw (22) [dashed] -- (28);

	\draw (24) -- (30);
	\draw (25) -- (31);
	\draw (26) -- (32);
	\draw (27) [dashed]-- (33);
	\draw (28) -- (34);	
		
	\draw (17) [dashed]-- (18);	
	\draw (23) [dashed] -- (24);

	\end{tikzpicture}
\caption{The family $C^{k,h}$ of non-acyclic weakly labeled graphs which generalizes the family of Figure \ref{Figure:CkGeneralized} (bottom).}
\label{Figure:cuadriculado}
\end{figure}

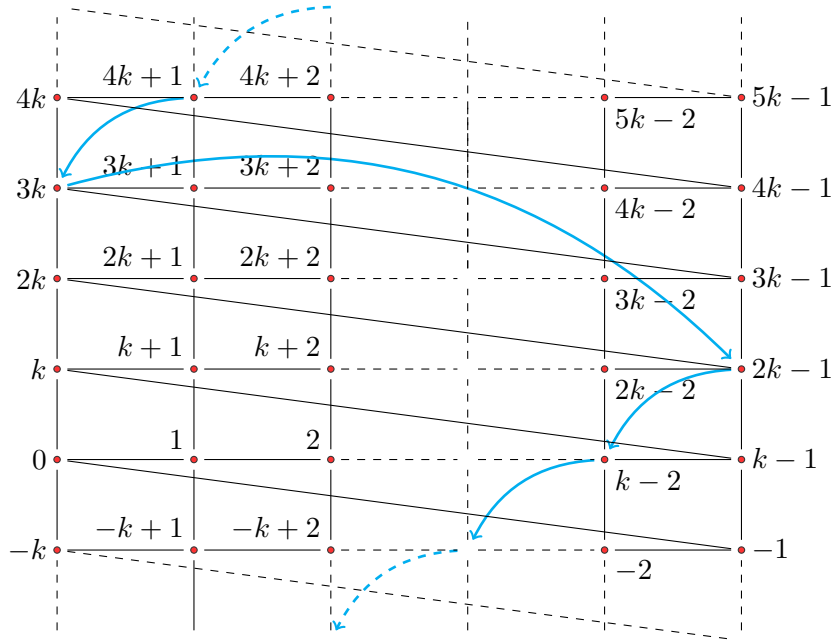
\begin{figure}

\begin{tikzpicture}[scale=0.6,rotate=0]

% nodos por filas de mayor a menor		
	
	\node (-7)at(0,-4){};
	\node (-8)at(3,-4){};	
	\node (-9)at(6,-4){};
	\node (-10)at(9,-4){};
	\node (-11)at(12,-4){};
	\node (-12)at(15,-4){};

	\node (-1)at(0,-2){};
	\filldraw[fill=red!80,draw=black!80] (-1) circle (2pt)
	node[left] {$-k$}; 
	\node (-2)at(3,-2){};	
	\filldraw[fill=red!80,draw=black!80] (-2) circle (2pt)
	node[anchor=south east] {$-k+1$};
	\node(-3)at(6,-2){};
	\filldraw[fill=red!80,draw=black!80] (-3) circle (2pt)
	node[anchor=south east] {$-k+2$};
	\node (-4)at(9,-2){};
	\node(-5)at(12,-2){};
	\filldraw[fill=red!80,draw=black!80] (-5) circle (2pt)
	node[anchor=north west] {$-2$};
	\node(-6)at(15,-2){};	
	\filldraw[fill=red!80,draw=black!80] (-6) circle (2pt)
	node[right] {$-1$};

	\node (0)at(0,0){};
	\filldraw[fill=red!80,draw=black!80] (0) circle (2pt)
	node[left] {$0$};
	\node (1)at(3,0){};	
	\filldraw[fill=red!80,draw=black!80] (1) circle (2pt)
	node[anchor=south east] {$1$};
	\node(2)at(6,0){};
	\filldraw[fill=red!80,draw=black!80] (2) circle (2pt)
	node[anchor=south east] {$2$};
	\node (3)at(9,0){};
	\node(4)at(12,0){};
	\filldraw[fill=red!80,draw=black!80] (4) circle (2pt)
	node[anchor=north west] {$k-2$};
	\node(5)at(15,0){};
	\filldraw[fill=red!80,draw=black!80] (5) circle (2pt)
	node[right] {$k-1$};

	\node(6)at(0,2){};
	\filldraw[fill=red!80,draw=black!80] (6) circle (2pt)
	node[left] {$k$};
	\node (7)at(3,2){};	
	\filldraw[fill=red!80,draw=black!80] (7) circle (2pt)
	node[anchor=south east] {$k+1$};
	\node(8)at(6,2){};
	\filldraw[fill=red!80,draw=black!80] (8) circle (2pt)
	node[anchor=south east] {$k+2$};
	\node (9)at(9,2){};
	\node(10)at(12,2){};
	\filldraw[fill=red!80,draw=black!80] (10) circle (2pt)
	node[anchor=north west] {$2k-2$};
	\node(11)at(15,2){};
	\filldraw[fill=red!80,draw=black!80] (11) circle (2pt)
	node[right] {$2k-1$};

	\node (12)at(0,4){};
	\filldraw[fill=red!80,draw=black!80] (12) circle (2pt)
	node[left] {$2k$};
	\node (13)at(3,4){};
	\filldraw[fill=red!80,draw=black!80] (13) circle (2pt)
	node[anchor=south east] {$2k+1$};	
	\node(14)at(6,4){};
	\filldraw[fill=red!80,draw=black!80] (14) circle (2pt)
	node[anchor=south east] {$2k+2$};
	\node (15)at(9,4){};
	\node (16)at(12,4){};
	\filldraw[fill=red!80,draw=black!80] (16) circle (2pt)
	node[anchor=north west] {$3k-2$};
	\node (17)at(15,4){};
	\filldraw[fill=red!80,draw=black!80] (17) circle (2pt)
	node[right] {$3k-1$};
	
	\node (18)at(0,6){};
	\filldraw[fill=red!80,draw=black!80] (18) circle (2pt)
	node[left] {$3k$};
	\node (19)at(3,6){};
	\filldraw[fill=red!80,draw=black!80] (19) circle (2pt)
	node[anchor=south east] {$3k+1$};	
	\node (20)at(6,6){};
	\filldraw[fill=red!80,draw=black!80] (20) circle (2pt)
	node[anchor=south east] {$3k+2$};
	\node (21)at(9,6){};
	\node (22)at(12,6){};
	\filldraw[fill=red!80,draw=black!80] (22) circle (2pt)
	node[anchor=north west] {$4k-2$};
	\node(23)at(15,6){};
	\filldraw[fill=red!80,draw=black!80] (23) circle (2pt)
	node[right] {$4k-1$};
	
	\node (24)at(0,8){};
	\filldraw[fill=red!80,draw=black!80] (24) circle (2pt)
	node[left] {$4k$};
	\node(25)at(3,8){};	
	\filldraw[fill=red!80,draw=black!80] (25) circle (2pt)
	node[anchor=south east] {$4k+1$};
	\node (26)at(6,8){};
	\filldraw[fill=red!80,draw=black!80] (26) circle (2pt)
	node[anchor=south east] {$4k+2$};
	\node (27)at(9,8){};
	\node(28)at(12,8){};
	\filldraw[fill=red!80,draw=black!80] (28) circle (2pt)
	node[anchor=north west] {$5k-2$};
	\node (29)at(15,8){};
	\filldraw[fill=red!80,draw=black!80] (29) circle (2pt)
	node[right] {$5k-1$};
	
	\draw[->,dashed,color=cyan,line width = 1pt] (6,10) to[bend right] (25);
	\draw[->,color=cyan,line width = 1pt] (25) to[bend right] (18);
	\draw[->,color=cyan,line width = 1pt] (18) to[bend left] (11);
	\draw[->,color=cyan,line width = 1pt] (11) to[bend right] (4);
	\draw[->,color=cyan,line width = 1pt] (4) to[bend right] (-4);
	\draw[->,dashed,color=cyan,line width = 1pt] (-4) to[bend right] (-9);
	
	\node (30)at(0,10){};
	\node (31)at(3,10){};	
	\node (32)at(6,10){};
	\node (33)at(9,10){};
	\node (34)at(12,10){};
	\node (35)at(15,10){};

	\draw (-7) [dashed]to (-1);%verticales
	\draw (-1)to(0);
	\draw (0) to (6);
	\draw (6) to (12);
	\draw (12) to (18);
	\draw (18) to (24);
	\draw (24)[dashed] to (30);

	\draw (-8)to(-2);
	\draw (-2) to (1);
	\draw (1) to (7);
	\draw (7) to (13);
    \draw (13) to (19); 
    \draw (19) to (25);
	\draw (25) [dashed]to (31);
	
	%\draw (-15)[dashed]to(-9);
	\draw (-9)[dashed]to(-3);
	\draw (-3) to (2);      	
	\draw (2) to (8);
	\draw (8) to (14);
	\draw (14) to (20);
	\draw (20) to (26);
	\draw (26)[dashed] to (32);

	\draw (-10)[dashed]--(-4);
	\draw (-4)[dashed] -- (3); 
	\draw (3) [dashed] -- (9);
	\draw (9) [dashed] -- (15);
	\draw (15)[dashed] -- (21);
	\draw (21)[dashed] -- (27);
	\draw (15)[dashed] -- (33);
	
	%\draw (-17)[dashed]to(-11);
	\draw (-11)[dashed]to(-5);
	\draw (-5) to (4); 
	\draw (4) to (10);
	\draw (10) to (16);
	\draw (16) to (22);	
	\draw (22) to (28);
	\draw (28)[dashed] to (34);	
	
	\draw (29)[dashed]to(35);
	\draw (23)to(29);
	\draw (17) to (23); 
	\draw (11) to (17);
	\draw (5) to (11);
	\draw (-6) to (5);	
	\draw (-12)[dashed] to (-6);
	%\draw (-18)[dashed] to (-12);	

	\draw (5) to (6); % diagonales
	\draw (11) to (12);
	\draw (17) to (18);	
	\draw (23) to (24);
	\draw (29) [dashed]-- (30);
	\draw (-6) to (0);
	\draw (-1) [dashed]-- (-12);

	\draw (-1) to (-2); %horizontales
	\draw (-2) to (-3);
	\draw (-3) [dashed]-- (-4);
	\draw (-4) [dashed]-- (-5);
	\draw (-5) to (-6);	
	
	\draw (0) to (1); 
	\draw (1) to (2);
	\draw (2) [dashed]-- (3);
	\draw (3) [dashed]-- (4);
	\draw (4) to (5);

	\draw (6) to (7); 
	\draw (7) to (8);
	\draw (8) [dashed] -- (9);
	\draw (9) [dashed] -- (10);
	\draw (10) to (11);
	
	\draw (12) to (13);
	\draw (13) to (14);
	\draw (14) [dashed]--(15);
	\draw (15) [dashed] --(16);
	\draw (16) to (17);
	
	\draw (18) to (19);
	\draw (19) to (20);
	\draw (20) [dashed]--(21);
	\draw (21) [dashed] --(22);
	\draw (22) to (23);
	
	\draw (24) to (25);
	\draw (25) to (26);
	\draw (26) [dashed]--(27);
	\draw (27) [dashed] --(28);
	\draw (28) to (29);
	
	\end{tikzpicture}

	\caption{New harmonic labeling from $C^{k,\infty}$.} \label{Figure:ChkstrongCyan}
\end{figure}

Let $\mathcal{B}=\{(i,k)\, : \, k>1 \text{ and } 0\leq i\leq k-1\}$. For any (finite or infinite) subset $B$ of $\mathcal{B}$ we form the graph $P_B$ obtained from (the harmonically labeled graph) $\mathbb{Z}$ by adding the edges $\{(s+1)k+i,sk+i\}$ for every $s\in\mathbb{Z}$ for each $(i,k)\in B$. We call $B$ a \emph{base} for $P_B$ and we write $P_B=\langle x\, : \, x\in B\rangle$ (the elements of $B$ are the \emph{spanning edges} of $P_B$). We picture a concrete example in Figure \ref{Figure:P_B}.

\begin{prop} For any $B\subset \mathcal{B}$, $P_B$ is an  harmonically labeled graph. Furthermore, $P_B=P_{B'}$ if and only if $B=B'$.\end{prop}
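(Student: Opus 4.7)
The plan is to verify the two claims separately, using the identity map $\ell(n) = n$ on $\mathbb{Z}$ as the natural candidate harmonic labeling.

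For the first claim, I would fix an arbitrary vertex $n \in V_{P_B} = \mathbb{Z}$ and describe its neighbors. Two of them, $n-1$ and $n+1$, come from the baseline $\mathbb{Z}$-path. The remaining neighbors come from the generators: for each $(i,k) \in B$ with $n \equiv i \pmod{k}$, writing $n = sk + i$, the edges $\{(s-1)k+i,\, sk+i\}$ and $\{sk+i,\, (s+1)k+i\}$ contribute exactly two extra neighbors, namely $n-k$ and $n+k$. I then need to check three disjointness facts: (a) added edges are never baseline edges, since each has length $k > 1$; (b) generators with different values of $k$ contribute edges of different lengths; (c) two generators $(i,k)$ and $(i',k)$ with $i \neq i'$ in $[0,k-1]$ contribute edges incident to disjoint residue classes modulo $k$. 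Setting $B_n = \{(i,k) \in B : n \equiv i \pmod k\}$, this gives $\deg(n) = 2 + 2|B_n|$ and
\[
\sum_{w \sim n} \ell(w) = (n-1) + (n+1) + \sum_{(i,k) \in B_n}\bigl[(n-k) + (n+k)\bigr] = 2n\,(1 + |B_n|) = n \cdot \deg(n),
\]
which is exactly \eqref{Eq:HarmonicityCondition}. Since $\ell$ is trivially a bijection $\mathbb{Z} \to \mathbb{Z}$ and $P_B$ has no leaves by construction, it is a (strong) harmonic labeling.

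For the second claim, I would argue by contrapositive. Assume $B \neq B'$ and, without loss of generality, take $(i,k) \in B \setminus B'$. The edge $e = \{i, i+k\}$ lies in $P_B$ (from $s = 0$ in the generator $(i,k)$). I claim $e \notin E_{P_{B'}}$: since its length is $k > 1$ it is not a baseline edge, so any occurrence in $P_{B'}$ would need to come from some $(i',k') \in B'$. Any such edge $\{sk'+i',\, (s+1)k'+i'\}$ has length $k'$, forcing $k' = k$, and both of its endpoints are congruent to $i'$ modulo $k$, forcing $i' \equiv i \pmod k$; since $i, i' \in [0, k-1]$, this gives $i' = i$. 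But $(i,k) \notin B'$, a contradiction.

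The main obstacle is not a hard step but rather keeping the bookkeeping clean: one must verify the disjointness of the contributions from distinct generators (and from the baseline path) in order to justify the formula $\deg(n) = 2 + 2|B_n|$. Both conditions in the definition of $\mathcal{B}$---the requirement $k > 1$ and the normalization $0 \le i \le k-1$---are precisely what ensures this. Once they are in place, the harmonicity is an immediate consequence of the symmetric cancellation $(n-k)+(n+k) = 2n$, while the uniqueness of $B$ follows from the fact that the edge $\{i, i+k\}$ uniquely identifies its generator.
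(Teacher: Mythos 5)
Your proof is correct and follows essentially the same route as the paper's: both rest on the observation that edges coming from distinct generators $(i,k)$ are pairwise disjoint (you argue via edge length and residue class, the paper by solving the corresponding linear system, which is the same computation) and on the symmetric cancellation $(n-k)+(n+k)=2n$; the paper merely packages the harmonicity check as adding one generator at a time while you compute $\deg(n)=2+2|B_n|$ and the neighbor sum directly. The uniqueness argument via the edge $\{i,i+k\}$ determining its generator is likewise the paper's argument.
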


\begin{proof} First of all, we note that the set of edges added by different pairs $(i,k)$ and $(i',k')$ are disjoint. Indeed, the system $$\begin{cases}sk+i=s'k'+i'\\(s+1)k+i=(s'+1)k'+i'\end{cases}$$ has unique solution $s=s'$, $k=k'$ and $i=i'$ for $0\leq i,i'\leq k-1$. So it suffices to show that if a vertex $v$ is harmonically labeled then adding the edges $\{(s+1)k+i,sk+i\}$ to a $P_{B'}$ corresponding to a single member $(i,k)\in B\setminus B'$ keeps $v$ harmonic. This is clear if the vertex $v$ is not incident to any of the added edges. Otherwise, $v$ has new adjacent vertices labeled $\ell(v)-k$ and $\ell(v)+k$. Therefore $$\sum_{w\sim v\in P_{B'}}\ell(w)+(\ell(v)-k)+(\ell(v)+k)=(\deg(v)+2)\ell(v),$$ which proves the claim. Finally, by the previous remarks, every edge is exclusive of a given $(i,k)$ with $k\geq 2$ and $0\leq i\leq k-1$. Therefore, $P_B=P_{B'}$ if and only if $B=B'$.\end{proof}

\begin{coro} The collection $\mathcal{P}=\{P_B\, : \, B\subset\mathcal{B}\}$ is a non-numerable family of harmonically labeled graphs.\end{coro}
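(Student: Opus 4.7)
The plan is to deduce this corollary directly from the preceding proposition together with a cardinality argument. The proposition provides two ingredients: first, each $P_B$ with $B \subset \mathcal{B}$ is harmonically labeled; second, the assignment $B \mapsto P_B$ is injective, since $P_B = P_{B'}$ forces $B = B'$. Thus all graphs in $\mathcal{P}$ are harmonically labeled, and the cardinality of $\mathcal{P}$ equals the cardinality of the power set $2^{\mathcal{B}}$.

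The remaining step is to verify that $2^{\mathcal{B}}$ is non-numerable. This follows from Cantor's theorem once we observe that $\mathcal{B} = \{(i,k) \,:\, k > 1,\ 0 \leq i \leq k-1\}$ is countably infinite: it is an infinite subset of $\mathbb{Z}^2$, and can be enumerated explicitly by listing, for each $k \geq 2$, the finitely many pairs $(0,k), (1,k), \ldots, (k-1,k)$. Cantor's theorem then gives $|2^{\mathcal{B}}| = 2^{\aleph_0} > \aleph_0$.

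There is no real obstacle here; the content of the corollary is essentially packaged in the preceding proposition. The only mild subtlety worth remarking on is that, by the injectivity part of the proposition, no two distinct subsets $B, B' \subset \mathcal{B}$ produce the same harmonically labeled graph (even up to the labeling-equivalence convention), so the uncountability of $\mathcal{P}$ is genuine and not an artifact of overcounting isomorphic labeled structures.
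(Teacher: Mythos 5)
Your proof is correct and matches the paper's (implicit) argument: the corollary follows immediately from the preceding proposition via the injectivity of $B\mapsto P_B$ and the uncountability of the power set of the countably infinite set $\mathcal{B}$. The only caveat is your parenthetical remark that distinctness holds ``even up to the labeling-equivalence convention'' --- the proposition only asserts literal equality $P_B=P_{B'}\Rightarrow B=B'$, not non-isomorphism under translation/inversion, but this stronger claim is not needed for the corollary as stated.
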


\begin{figure}
\begin{tikzpicture}[scale=0.9]

%	\node(0)at(0,0){};
%	\filldraw[fill=red!80,draw=black!80] (0) circle (2pt)
%	node[below=2pt] {$-4$}; 
	\node (1)at(0.5 ,0){};	
%	\filldraw[fill=red!80,draw=black!80] (1) circle (2pt)
%	node[below=2pt] {}; 
	\node (2)at(2 ,0){};
	\filldraw[fill=red!80,draw=black!80] (2) circle (2pt)
	node[below=2pt] {$-2$};
	\node (3)at(3 ,0){};	
	\filldraw[fill=red!80,draw=black!80] (3) circle (2pt)
	node[below=2pt] {$-1$};
	\node (4)at(4,0){};	
	\filldraw[fill=red!80,draw=black!80] (4) circle (2pt)
	node[below=2pt] {$0$};
	\node (5)at(5 ,0){};	
	\filldraw[fill=red!80,draw=black!80] (5) circle (2pt)
	node[below=2pt] {$1$};
	\node (6)at(6,0){};	
	\filldraw[fill=red!80,draw=black!80] (6) circle (2pt)
	node[below=2pt] {$2$};
	\node (7)at(7 ,0){};	
	\filldraw[fill=red!80,draw=black!80] (7) circle (2pt)
	node[below=2pt] {$3$};
	\node (8)at( 8,0){};	
	\filldraw[fill=red!80,draw=black!80] (8) circle (2pt)
	node[below=2pt] {$4$};
	\node (9)at(9 ,0){};	
	\filldraw[fill=red!80,draw=black!80] (9) circle (2pt)
	node[below=2pt] {$5$};
	\node (10)at(10,0){};	
	\filldraw[fill=red!80,draw=black!80] (10) circle (2pt)
	node[below=2pt] {$6$};
	\node (11)at(11 ,0){};	
	\filldraw[fill=red!80,draw=black!80] (11) circle (2pt)
	node[below=2pt] {$7$};
	\node (12)at(12,0){};	
	\filldraw[fill=red!80,draw=black!80] (12) circle (2pt)
	node[below=2pt] {$8$};	
	\node (13)at(13 ,0){};	
	\filldraw[fill=red!80,draw=black!80] (13) circle (2pt)
	node[below=2pt] {$9$};
	\node (14)at(14,0){};	
	\filldraw[fill=red!80,draw=black!80] (14) circle (2pt)
	node[below=2pt] {$10$};
	\node (15)at(15.5 ,0){};	
%	\filldraw[fill=red!80,draw=black!80] (15) circle (2pt)
%	node[below=2pt] {$11$};
%	\node (16)at(16,0){};	
%	\filldraw[fill=red!80,draw=black!80] (16) circle (2pt)
%	node[below=2pt] {$12$};

	%\draw (1)to[bend right] (5);
%	\draw (0) to (1);
	\draw  (1) [dashed] -- (2);
	\draw (2) to (3);
	\draw (3) to (4);
	\draw (4) to (5);
	\draw (5) to (6);
	\draw (6) to (7);
	\draw (7) to (8);
	\draw (8) to (9);
	\draw (9) to (10);
	\draw (10) to (11);
	\draw (11) to (12);
	\draw (12) to (13);
	\draw (13) to (14);
	\draw (14) [dashed] -- (15);
%	\draw (15) to (16);

	\draw[dashed] (0,0) to[bend left] (2);
	\draw (2) to[bend left] (4);
	\draw[color=cyan,line width = 1pt] (4) to[bend left] (6);
	\draw (6) to[bend left] (8);
	\draw (8) to[bend left] (10);
	\draw (10) to[bend left] (12);
	\draw (12) to[bend left] (14);
	\draw[dashed] (14) to[bend left] (16,0);
	
	\draw (2) to[bend left] (5);	
	\draw[color=cyan,line width = 1pt] (5) to[bend left] (8);
	\draw (8) to[bend left] (11);
 	\draw (11) to[bend left] (14);
	
	\draw (7)[color=cyan,line width = 1pt] to[bend left] (12);	
    \draw (2) to[bend left] (7);
    \draw[dashed] (12) to[bend left] (16,0.3);

	\draw[dashed] (14)  to[bend left] (16,0.3);
 \draw[dashed] (2)  to[bend right] (0,0.3);
 \draw[dashed] (2)  to[bend right] (0,0.6);

%	\draw (8) [dashed] -- (10);
%	\draw (10) [dashed] -- (12);

%	\draw (12) to (14);

%	\draw (14) to (16);

	\end{tikzpicture}
	\caption{The harmonically labeled graph $\langle (0,2), (1,3), (3,5)\rangle$ (in cyan, the spanning edges).}
	\label{Figure:P_B}
\end{figure}
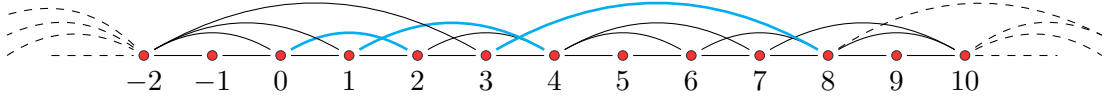

Some of the previously presented examples actually belong to the collection $\mathcal{P}$. For example, $C^{k,\infty}=\langle (0,k) \rangle$ and $K_{1,2}\breve{\times}\mathbb{Z}=\langle (1,3) \rangle$. However, $K_{1,4}\breve{\times}\mathbb{Z}$ is not one of these graphs.

\begin{obs}\label{Remark:OpenProblems} A set $V'\subset V_G$ is said to be a \textit{labeling spanning set} if the values of a labeling $\ell$ on the vertices of $V'$ completely determines the labeling of $G$ (by the harmonic property). In \cite[\S 6]{BCPT} the authors ask which connected graphs other than $\mathbb{Z}$ admit an harmonic labeling spanned by a finite set. We claim that the members $\langle (0,k) \rangle$ of $P_{\mathcal{B}}$ for any $k\in\mathbb{Z}$ are  finitely spanned by vertices labeled $0$ and $1$. Indeed, these two labels trivially determine all labels from $0$ to $k$. The labels $x_{k+1},x_{k+2},\dots,x_{2k}$ pictured in Figure \ref{Figure:P_B_Remark} are solutions of the system
$$\left(\begin{matrix}
1&0&0&\ldots&0&0&1\\
2&-1&0&\dots&0&0&0\\
-1&2&-1&\ldots&0&0&0\\
\vdots&\vdots&\vdots&\ldots&\vdots&\vdots&\vdots\\
0&0&0&\ldots&-1&2&-1\\
\end{matrix}\right)\cdot \left(\begin{matrix}
x_{k+1}\\
x_{k+2}\\
x_{k+3}\\
\vdots\\
x_{2k}\\
\end{matrix}\right)=\left(\begin{matrix}
3k+1\\
k\\
0\\
\vdots\\
0\\
\end{matrix}\right)$$
whose matrix is non-singular for every $k\in\mathbb{Z}$. The claim is then settled by an inductive argument. %$\det(A)=(-1)^{k+1}(k+1)$.
\begin{figure}
\begin{tikzpicture}[scale=0.9]

%	\node(0)at(0,0){};
%	\filldraw[fill=red!80,draw=black!80] (0) circle (2pt)
%	node[below=2pt] {$-4$}; 
	\node (1)at(0.5 ,0){};	
%	\filldraw[fill=red!80,draw=black!80] (1) circle (2pt)
%	node[below=2pt] {}; 
	\node (2)at(2 ,0){};
	\filldraw[fill=red!80,draw=black!80] (2) circle (2pt)
	node[below=2pt] {};
	\node (3)at(3 ,0){};	
	\filldraw[fill=red!80,draw=black!80] (3) circle (2pt)
	node[below=2pt] {};
	\node (4)at(4,0){};	
	\filldraw[fill=red!80,draw=black!80] (4) circle (2pt)
	node[below=2pt] {\small $0$};
	\node (5)at(5 ,0){};	
	\filldraw[fill=red!80,draw=black!80] (5) circle (2pt)
	node[below=2pt] {\small $1$};
	\node (6)at(6,0){};	
	\filldraw[fill=red!80,draw=black!80] (6) circle (2pt)
	node[below=2pt] {\small $2$};
%	\node (7)at(7 ,0){};	
%	\filldraw[fill=red!80,draw=black!80] (7) circle (2pt)
%	node[below=2pt] {\small $3$};
	\node (8)at( 8,0){};	
	\filldraw[fill=red!80,draw=black!80] (8) circle (2pt)
	node[below=2pt] {\small $k-1$};
	\node (9)at(9 ,0){};	
	\filldraw[fill=red!80,draw=black!80] (9) circle (2pt)
	node[below=2pt] {\small $k$};
	\node (10)at(10,0){};	
	\filldraw[fill=red!80,draw=black!80] (10) circle (2pt)
	node[below=2pt] {\small $x_{k+1}$};
	\node (11)at(11 ,0){};	
	\filldraw[fill=red!80,draw=black!80] (11) circle (2pt)
	node[below=2pt] {\small $x_{k+2}$};
%	\node (12)at(12,0){};	
%	\filldraw[fill=red!80,draw=black!80] (12) circle (2pt)
%	node[below=2pt] {\small $x_{k+3}$};	
	\node (13)at(13 ,0){};	
	\filldraw[fill=red!80,draw=black!80] (13) circle (2pt)
	node[below=2pt] {\small $x_{2k-1}$};
	\node (14)at(14,0){};	
	\filldraw[fill=red!80,draw=black!80] (14) circle (2pt)
	node[below=2pt] {\small $x_{2k}$};
	\node (15)at(15.5 ,0){};	
%	\filldraw[fill=red!80,draw=black!80] (15) circle (2pt)
%	node[below=2pt] {$11$};
%	\node (16)at(16,0){};	
%	\filldraw[fill=red!80,draw=black!80] (16) circle (2pt)
%	node[below=2pt] {$12$};

	%\draw (1)to[bend right] (5);
%	\draw (0) to (1);
	\draw  (1) [dashed] -- (2);
	\draw (2) to (3);
	\draw (3) to (4);
	\draw (4) to (5);
	\draw (5) to (6);
%	\draw (6) to (7);
	\draw[dashed] (6) to (8);
	\draw (8) to (9);
	\draw (9) to (10);
	\draw (10) to (11);
%	\draw (11) to (12);
	\draw[dashed] (11) to (13);
	\draw (13) to (14);
	\draw (14) [dashed] -- (15);
%	\draw (15) to (16);

%	\draw[dashed] (0,0) to[bend left] (2);
	\draw (4) to[bend left] (9);
%	\draw[color=cyan,line width = 1pt] (4) to[bend left] (6);
	\draw (9) to[bend left] (14);
	\draw[dashed] (0.6,0.4) to[bend left] (4);
	\draw[dashed] (14) to[bend left] (15.5,0.6);
%	
%	\draw (2) to[bend left] (5);	
%	\draw[color=cyan,line width = 1pt] (5) to[bend left] (8);
%	\draw (8) to[bend left] (11);
% 	\draw (11) to[bend left] (14);
%	
%	\draw (7)[color=cyan,line width = 1pt] to[bend left] (12);	
%    \draw (2) to[bend left] (7);
%    \draw[dashed] (12) to[bend left] (16,0.3);		
%	
%	
%	\draw[dashed] (14)  to[bend left] (16,0.3);
% \draw[dashed] (2)  to[bend right] (0,0.3);
% \draw[dashed] (2)  to[bend right] (0,0.6);

%	\draw (8) [dashed] -- (10);
%	\draw (10) [dashed] -- (12);

%	\draw (12) to (14);

%	\draw (14) to (16);

	\end{tikzpicture}
	\caption{The harmonic labeling of $\langle (0,k)\rangle$ is finitely spanned by $\{0,1\}$ for every $k\in \mathbb{Z}$.}
	\label{Figure:P_B_Remark}
\end{figure}
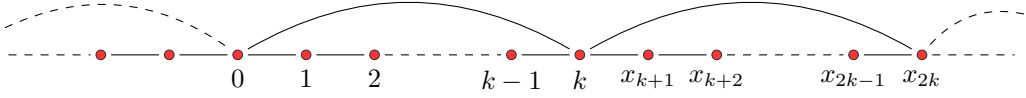
\end{obs}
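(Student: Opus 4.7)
The plan is to show by induction on blocks of $k$ consecutive integers that $\ell(n)=n$ is forced for every $n$, assuming $\ell$ is a harmonic labeling of $G=\langle(0,k)\rangle$ with $\ell(0)=0$ and $\ell(1)=1$. Recall that in $G$ each vertex that is a multiple of $k$ has degree four (with neighbors $n\pm 1$ and $n\pm k$), while every non-multiple of $k$ has degree two. As a base case I would handle the block $\{0,1,\dots,k\}$: at every vertex $n$ with $1\le n\le k-1$, harmonicity reduces to the two-term recurrence $\ell(n+1)=2\ell(n)-\ell(n-1)$, so iterating from $\ell(0)=0,\ell(1)=1$ yields $\ell(n)=n$ for $0\le n\le k$.

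For the inductive step, suppose $\ell(n)=n$ is forced on $\{0,1,\ldots,jk\}$ and set $x_i=\ell(jk+i)$ for $i=1,\ldots,k$. The harmonic conditions at $jk+1,jk+2,\ldots,jk+k-1$ (each of degree two) produce the $k-1$ relations $2x_1-x_2=jk$ and $-x_{i-1}+2x_i-x_{i+1}=0$ for $2\le i\le k-1$, while the harmonic condition at the degree-four vertex $jk$ yields the coupling equation $x_1+x_k=2jk+k+1$ (using the known values $\ell((j-1)k)=(j-1)k$, $\ell(jk-1)=jk-1$). Together these $k$ equations form the linear system whose matrix $A$ is exactly the one displayed in the remark (with a right-hand side shifted by a multiple of $k$). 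Since $x_i=jk+i$ is visibly a solution, uniqueness of the solution completes the induction, and the same argument carried out symmetrically extends $\ell$ to all negative integers.

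The crux is therefore the non-singularity of $A$. Expanding the determinant along the first row $(1,0,\ldots,0,1)$, only two minors contribute. The minor $M_{1,1}$ is lower triangular with diagonal entries all equal to $-1$, giving $M_{1,1}=(-1)^{k-1}$. The minor $M_{1,k}$ is the standard $(k-1)\times(k-1)$ tridiagonal matrix with diagonal $2$ and off-diagonals $-1$, whose determinant is classically known to equal its size plus one, namely $k$. Combining yields $\det A=(-1)^{k-1}+(-1)^{k+1}\cdot k=(-1)^{k-1}(k+1)\neq 0$. The main subtlety lies in setting up the boundary equation at $jk$ correctly — keeping careful track of which four neighbors contribute and their inductively known labels — after which the non-singularity of $A$ follows from this clean cofactor expansion.
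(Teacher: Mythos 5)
Your proposal is correct and follows essentially the same route as the paper's (very terse) argument: determine the labels on $[0,k]$ from the degree-two recurrences, set up the $k\times k$ linear system coming from the harmonic conditions at $jk, jk+1,\dots,jk+k-1$, observe that the identity labeling solves it, and conclude by non-singularity and induction (symmetrically for negative integers). The only difference is that you actually prove the non-singularity of the matrix via the cofactor expansion $\det A=(-1)^{k-1}+(-1)^{k+1}k=(-1)^{k-1}(k+1)\neq 0$, a computation which checks out and which the paper merely asserts.
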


\section{A Characterization of weak harmonic labelings}

%\section{Caracterizacion de etiquetamientos armonicos en terminos de subconjuntos armonicos del intervalo entero $[0,n]$}\mbox{}

In this section we characterize weakly labeled graphs in terms of certain collection of sets  of integers which we call \emph{harmonic subsets of $\mathbb{Z}$}.

\begin{defi}\label{harmonic} Given a non-empty finite subset $A\subset\mathbb{Z}$ we let $$av(A)=\frac{1}{|A|}\sum\limits_{k\in A}k.$$ Here $|A|$ denotes the cardinality of $A$. We say that $A$ is an \emph{harmonic subset of} $\mathbb{Z}$ if $av(A)\in A$.\end{defi}

\begin{obs}\label{Remark 3 Elements} Note that every unit subset of $\mathbb{Z}$ is harmonic; we call them \textit{trivial harmonic subsets}. Also, there are no two-element harmonic subsets of $\mathbb{Z}$. Therefore, any non-trivial harmonic subset of $\mathbb{Z}$ has at least three elements.\end{obs}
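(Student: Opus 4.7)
The plan is to verify each of the three assertions in turn directly from Definition \ref{harmonic}, since they are essentially computational.

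First I would handle the singleton case. If $A=\{a\}$ for some $a\in\mathbb{Z}$, then by definition $av(A)=\tfrac{1}{1}\cdot a=a$, which trivially belongs to $A$. Hence every unit subset of $\mathbb{Z}$ is harmonic.

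Next, I would address the two-element case. Suppose $A=\{a,b\}\subset\mathbb{Z}$ with $a\neq b$; without loss of generality $a<b$. Then
\[
av(A)=\frac{a+b}{2}.
\]
For $A$ to be harmonic we would need $\tfrac{a+b}{2}\in\{a,b\}$, that is, $a+b=2a$ or $a+b=2b$. The former gives $b=a$ and the latter gives $a=b$, both contradicting $a\neq b$. Hence no two-element subset of $\mathbb{Z}$ is harmonic.

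Finally, combining the two observations: a non-trivial harmonic subset has cardinality $\neq 1$, and by the argument above it cannot have cardinality $2$, so it must contain at least three elements. There is no real obstacle here; the only subtlety is to note that the argument in the two-element case uses crucially that $a,b\in\mathbb{Z}$ are distinct integers, so $\tfrac{a+b}{2}$ cannot coincide with either endpoint.
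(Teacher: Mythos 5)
Your proposal is correct and is exactly the routine verification the paper leaves implicit in Remark \ref{Remark 3 Elements}: the singleton case is immediate from $av(\{a\})=a$, and for $\{a,b\}$ with $a\neq b$ the average $\frac{a+b}{2}$ strictly between $a$ and $b$ cannot lie in the set. Nothing further is needed.
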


% The simplest examples of non-trivial harmonic subsets of $\mathbb{Z}$ are the sets $\{k-1,k,k+1\}$ of three consecutive integers. Of course, any set of an odd number of consecutive integers is harmonic. So is any multiple of these sets (the set obtained by multiplying every element by a fixed integer). A non-trivial example of harmonic subset of $\mathbb{Z}$ is $\{2,3,4,7\}$, which additionally has an even number of elements.

We shall show that certain collections of harmonic subsets of $\mathbb{Z}$ characterize weakly labeled graphs. For this, we consider pairs $(G,\ell)$ of a graph $G$  and a weak harmonic labeling  $\ell$ over $G$. Define an \textit{isomorphism} between two weakly labeled graphs $(G,\ell) $ and $(G',\ell')$ as a graph isomorphism $f:G\rightarrow G'$ such that $\ell'(f(v))=\ell(v)$ for every $v\in V_G$. We let $\mathcal{G}$ denote the quotient set of pairs $(G,\ell)$ under the isomorphism relation.

Given $(G,\ell)\in\mathcal{G}$ we consider the collection $$\mathcal{A}_{(G,\ell)}=\{A_{v}\,:\, v\in V_G\setminus S_G\}$$ where $A_{v}=\{\ell(w)\,:\, w\in N_G(v)\}$. It is easy to see that $\mathcal{A}_{(G,\ell)}$ is a well-defined collection of non-trivial harmonic subsets of $\mathbb{Z}$ such that $av(A_{v})=\ell(v)$. In particular, $A_v\neq A_u$ if $v\neq u$. Also, this collection is finite if and only if $G$ is finite. Furthermore, the collection $\mathcal{A}_{(G,\ell)}$ satisfies the following conditions (whose easy verification are left to the reader).

\begin{lema}\label{Lemma:  4 Properties} Let $\mathcal{A}$ be the collection $\mathcal{A}_{(G,\ell)}$ of harmonic subsets of $\mathbb{Z}$  defined as above. For $A,B\in\mathcal{A}$, we have:\begin{enumerate}
\item[(P1)] $\bigcup_{C\in\mathcal{A}_{(G,\ell)}} C$ is an integer interval. 
\item[(P2)] $av(A)\neq av(B)$ if $A\neq B$
\item[(P3)] If $t\in A\cap B$, $A\neq B$, then there exists $C\in \mathcal{A}$ such that $av(C)=t$.
\item[(P4)] If $av(A)\in B$ then $av(B)\in A$
\item[(P5)] There exists a sequence $A_{i_1},\ldots,A_{i_r}\subset \mathcal{A}$ such that $A_{i_1}=A$, $A_{i_r}=B$ and $av(A_{i_j})\in A_{i_{j+1}}$ for $1\leq j\leq r-1$ (connectedness condition).
%\item $|C|\geq 3$ for all $C\in\mathcal{A}_{(G,\ell)}$
%\item $\mathcal{A}_{(G,\ell)}$ is connected
\end{enumerate}\end{lema}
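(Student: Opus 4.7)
The plan is to translate each of the five properties into a statement about the graph $G$ via the dictionary $A_v \leftrightarrow v$ for $v \in V_G \setminus S_G$, using the two identities $\ell(v) = av(A_v)$ (a direct reformulation of the harmonicity condition, together with $|N_G(v)| = \deg(v)+1$) and $A_v = \ell(N_G(v))$. Bijectivity of $\ell$ then turns membership and equality questions on $\mathbb{Z}$ into adjacency questions in $G$, which are immediate.

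For (P1), I would note that $\bigcup_{v \notin S_G} A_v = \ell\bigl(\bigcup_{v \notin S_G} N_G(v)\bigr)$; by Remark~\ref{Obs:dosverticesunonohoja} (every edge has a non-leaf endpoint, and each non-leaf belongs to its own closed neighborhood), this union is all of $V_G$, so the image is $I$, which is an integer interval by Definition~\ref{Def:Harmonic}. For (P2), $A_u \neq A_v$ forces $u \neq v$, whence $av(A_u) = \ell(u) \neq \ell(v) = av(A_v)$ by injectivity of $\ell$. Property (P4) is nothing but symmetry of adjacency: $\ell(u) = av(A_u) \in A_v$ means $u \in N_G(v)$, which is equivalent to $v \in N_G(u)$, hence $av(A_v) = \ell(v) \in A_u$.

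For (P3), suppose $t \in A_u \cap A_v$ with $u \neq v$. Injectivity of $\ell$ produces a unique $w$ with $\ell(w) = t$ and $w \in N_G(u) \cap N_G(v)$. If $w \in \{u,v\}$, then $w$ is already a non-leaf and $C := A_w$ satisfies $av(C) = t$. Otherwise $w$ is a proper neighbor of both $u$ and $v$; it cannot be a leaf, since a leaf has a unique neighbor but $w$ has the two distinct neighbors $u$ and $v$. Hence $C = A_w \in \mathcal{A}$ works in either case.

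The main obstacle, and the only step requiring a nontrivial combinatorial observation, is (P5). The correspondence sends a sequence $A_{v_1}, \ldots, A_{v_r}$ with all $v_j$ non-leaves and $av(A_{v_j}) \in A_{v_{j+1}}$ to a walk of non-leaves $v_1, \ldots, v_r$ in $G$ with consecutive vertices equal or adjacent. Since $G$ is connected, there exists a simple path from $u$ to $v$; the key observation is that no internal vertex of a simple path can be a leaf, for such a leaf would have its unique neighbor equal to both its predecessor and its successor on the path, violating simplicity. Applying this observation to a shortest path from $u$ to $v$ yields the required sequence, completing the proof.
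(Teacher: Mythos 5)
Your verification is correct and is precisely the routine check the paper leaves to the reader (the paper states these properties without proof, noting their "easy verification"): each property translates under the dictionary $v \leftrightarrow A_v$, $\ell(v) = av(A_v)$, $A_v = \ell(N_G(v))$ into an elementary fact about $G$. Your handling of the two less-automatic points — that the common vertex in (P3) cannot be a leaf, and that internal vertices of a simple path are never leaves for (P5) — is exactly right.
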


%For every $p,q\in\{av(A)\,:\, A\in \mathcal{A}\}$ there exists a sequence $i_1,\ldots,i_r$ such that \begin{itemize}\item $p=av(A_{i_1})$ and $q\in av(A_{i_r})$
%\item $A_{i_j}\cap A_{i_{j+1}}\neq\emptyset$ for $1\leq j\leq r-1$.\end{itemize}

Note that  \textit{(P2)} implies that the $t$ in \textit{(P3)} is unique. Actually, \textit{(P2)} is covered by requesting the unicity of $t$ in \textit{(P3)}. However, we state it in this form for computational reasons that will become evident later. On the other hand, \textit{(P5)} is a direct consequence of the connectedness of $G$.

%Note that it follows from Remark \ref{Remark 3 Elements} that every set in $\mathcal{A}_{(G,\ell)}$ has at least three elements.

The main result of this section is that properties \textit{(P1)} through \textit{(P5)} of Lemma \ref{Lemma:  4 Properties} characterize weak harmonic labelings, in the sense that $(G,\ell)\mapsto \mathcal{A}_{(G,\ell)}$ is a bijection between $\mathcal{G}$ and the class $\mathcal{H}$ of collections of non-trivial harmonic subsets of $\mathbb{Z}$ satisfying \textit{(P1)} through \textit{(P5)}. Furthermore, if $\mathcal{G}_I\subset\mathcal{G}$ is the subset of pairs $(G,\ell)$ for which $\ell$ is a weak harmonic labeling onto $I$ and $\mathcal{H}_I\subset \mathcal{H}$ is the class of collections $\mathcal{A}$ for which $\bigcup_{C\in\mathcal{A}} C=I$ then the bijection takes $\mathcal{G}_I$ onto $\mathcal{H}_I$.

Note that the map $(G,\ell) \mapsto \mathcal{A}_{(G,\ell)}$ sends elements of $\mathcal{G}_I$ to $\mathcal{H}_I$ by Remark \ref{Obs:dosverticesunonohoja}. We next build the inverse map $\mathcal{H}_I\rightarrow \mathcal{G}_I$. Let $\mathcal{A}=\{A_i\}_{i\in J}\in \mathcal{H}_I$. We define the associated graph $G_{\mathcal{A}}$ as follows:\begin{itemize}
\item $V_{G_{\mathcal{A}}}=I$
\item $i\sim j \Leftrightarrow (\exists\, t / i=av(A_t)\text{ and }j\in A_t)\text{ or }(\exists\, t / j=av(A_t)\text{ and }i\in A_t$).
\end{itemize}
Furthermore, we define a vertex labeling $\ell_{\mathcal{A}}:V_{G_{\mathcal{A}}}\rightarrow I$ by $\ell_{\mathcal{A}}(i)=i$. Lemmas \ref{Lemma Conected} and \ref{Lemma Core} and Corollary \ref{Coro labeling harmonic} below prove that $(G_{\mathcal{A}},\ell_{\mathcal{A}})\in\mathcal{G}_I$.

\begin{lema}\label{Lemma Conected} With the notations as above, $G_{\mathcal{A}}$ is connected.\end{lema}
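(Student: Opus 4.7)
The plan is to reduce connectedness of $G_{\mathcal{A}}$ to the combinatorial connectedness condition (P5) by first showing that every vertex of $G_{\mathcal{A}}$ is either an average $av(A_t)$ for some $t$ or is adjacent to such an average, and then showing that all the averages lie in a single connected component.

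First, I would observe that for any vertex $i\in V_{G_{\mathcal{A}}}=I$, property (P1) guarantees the existence of some $A_t\in\mathcal{A}$ with $i\in A_t$. Let $m_t=av(A_t)$. If $i\neq m_t$, then the pair $(m_t,i)$ witnesses the second disjunct in the edge definition (with $j=m_t=av(A_t)$ and $i\in A_t$), so $i\sim m_t$ in $G_{\mathcal{A}}$. Therefore every vertex of $G_{\mathcal{A}}$ is either equal to, or adjacent to, some average $av(A_t)$.

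Next, I would show that the set $M=\{av(A_t)\,:\,A_t\in\mathcal{A}\}\subset I$ lies entirely in a single connected component of $G_{\mathcal{A}}$. For this, take arbitrary $A,B\in\mathcal{A}$; by (P5) there is a sequence $A_{i_1}=A,A_{i_2},\ldots,A_{i_r}=B$ in $\mathcal{A}$ with $av(A_{i_j})\in A_{i_{j+1}}$ for every $1\leq j\leq r-1$. For each such $j$, the condition $av(A_{i_j})\in A_{i_{j+1}}$ says (taking the index $t=i_{j+1}$, together with $j':=av(A_{i_{j+1}})=av(A_t)$ and $i':=av(A_{i_j})\in A_t$) that $av(A_{i_j})$ and $av(A_{i_{j+1}})$ are adjacent in $G_{\mathcal{A}}$, unless these two values coincide, in which case they are the same vertex and we may shorten the sequence (the possibility of equality is governed by (P2), since equal averages force equal sets). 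Concatenating these edges yields a path in $G_{\mathcal{A}}$ from $av(A)$ to $av(B)$, so $M$ lies in one connected component.

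Combining the two steps, any two vertices $i,j\in I$ are each connected to some element of $M$ (or already lie in $M$), and any two elements of $M$ are connected to each other, so $i$ and $j$ are connected in $G_{\mathcal{A}}$. I expect the only delicate point to be the bookkeeping of the degenerate case where consecutive averages in the (P5)-sequence coincide; this is handled by invoking (P2) to collapse the sequence. No other obstacle appears, so the argument should be very short.
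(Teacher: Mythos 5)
Your proposal is correct and follows essentially the same route as the paper: use \textit{(P1)} to attach every vertex to some average $av(A_t)$ via the edge definition, and use \textit{(P5)} to string the averages together into a walk. Your explicit treatment of the degenerate case $av(A_{i_j})=av(A_{i_{j+1}})$ (collapsed via \textit{(P2)}) is a small point the paper's proof glosses over, but it does not change the argument.
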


\begin{proof} Let $p,q\in I$. By \textit{(P1)} there exists $A'_{p},A'_{q}\in\mathcal{A}$ such that $p\in A'_{p}$ and $q\in A'_{q}$. Note that either $p=av(A_{i_p})$ or $p\sim av(A_{i_p})$ (and analogously with $q$). By \textit{(P5)} there exists a sequence $A'_{p}=A_{i_1},A_{i_2},\ldots,A_{i_r}=A'_{q}$ such that $av(A_{i_j})\in A_{i_{j+1}}$ for every $1\leq j\leq r$. In particular, $av(A_{i_j})\sim av(A_{i_{j+1}})$ for every $1\leq j\leq r$. Hence, the walk $p, av(A_{i_1}),av(A_{i_2}),\ldots,av(A_{i_r}),q$ connects $p$ with $q$.\end{proof}

\begin{lema}\label{Lemma Core} With the notations as above, $i\in V_{G_{\mathcal{A}}}\setminus S_{G_{\mathcal{A}}}$ if and only if $\exists\, t\in J$ such that $i=av(A_t)$. Furthermore, this $t$ is unique and $N_{G_{\mathcal{A}}}(i)=A_{t}$.\end{lema}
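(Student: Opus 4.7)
The plan is to prove the biconditional in two implications, then handle the uniqueness of $t$, and finally the equality of closed neighborhoods. For the forward direction, I would suppose $i=av(A_t)$ for some $t\in J$. Since $A_t$ is harmonic, $i\in A_t$; and for every $j\in A_t$ with $j\neq i$ the adjacency rule (the clause with $t$ as witness) yields $j\sim i$. By Remark \ref{Remark 3 Elements}, $A_t$ has at least three elements, so $i$ has at least two distinct neighbors and cannot be a leaf.

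For the reverse direction, suppose $i\in V_{G_{\mathcal{A}}}\setminus S_{G_{\mathcal{A}}}$. Using Lemma \ref{Lemma Conected} and the fact that every $A\in\mathcal{A}$ is non-trivial, $i$ has at least two distinct neighbors $j_1,j_2$. For each $k\in\{1,2\}$, the definition of $\sim$ yields one of two cases: either (a) there is some $t$ with $i=av(A_t)$ and $j_k\in A_t$, or (b) there is some $s_k$ with $j_k=av(A_{s_k})$ and $i\in A_{s_k}$. If case (a) occurs for either $k$, we are done. Otherwise (b) holds for both indices, and since $j_1\neq j_2$ property (P2) forces $A_{s_1}\neq A_{s_2}$. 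Thus $i\in A_{s_1}\cap A_{s_2}$ with $A_{s_1}\neq A_{s_2}$, and (P3) furnishes $C\in\mathcal{A}$ with $av(C)=i$. This case analysis, and in particular the crucial invocation of (P3), is the main (and essentially only) subtle point of the argument.

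Uniqueness of $t$ is immediate from (P2): if $i=av(A_t)=av(A_{t'})$ then $A_t=A_{t'}$, so $t=t'$. To establish $N_{G_{\mathcal{A}}}(i)=A_t$, the inclusion $A_t\subseteq N_{G_{\mathcal{A}}}(i)$ is built into the adjacency rule (with $A_t$ as witness and $i=av(A_t)$). For the reverse inclusion, take any neighbor $j\neq i$ of $i$; by the definition of $\sim$, either there is $s$ with $i=av(A_s)$ and $j\in A_s$, in which case (P2) forces $A_s=A_t$ and hence $j\in A_t$; or there is $s$ with $j=av(A_s)$ and $i\in A_s$, i.e.\ $av(A_t)\in A_s$, whereupon (P4) yields $av(A_s)\in A_t$, that is $j\in A_t$. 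In either case $j\in A_t$, completing the proof.
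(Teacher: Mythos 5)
Your proof is correct and follows essentially the same route as the paper's: harmonicity plus Remark \ref{Remark 3 Elements} for one direction, the $(P3)$ case analysis for the other, $(P2)$ for uniqueness, and $(P2)$/$(P4)$ for the two inclusions of $N_{G_{\mathcal{A}}}(i)=A_t$. The only (welcome) difference is that you make explicit, via $(P2)$, why the two witnessing sets $A_{s_1}\neq A_{s_2}$ before invoking $(P3)$, a point the paper leaves implicit.
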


\begin{proof} If $i\in V_{G_{\mathcal{A}}}\setminus S_{G_{\mathcal{A}}}$ then there exist $j_1\neq j_2$ such that $j_1,j_2\in N_{G_{\mathcal{A}}}(i)$. If $i\neq av(A_t)$ for every $t$ then $\exists\, t_1, t_2$ such that $j_1=av(A_{t_1})$, $j_2=av(A_{t_2})$ and $i\in A_{t_1}\cap A_{t_2}$. But then  \textit{(P3)} implies the existence of $t$ such that $i=av(A_t)$, contradicting our assumption.

Suppose now that $i=av(A_t)$ for some $t$. In particular, $i\in A_{t}$ (because $A_t$ is an harmonic subset). By Remark \ref{Remark 3 Elements}, there exist $j_1,j_2\in A_{t}$ non-equal such that $j_1,j_2\neq av(A_t)$. Thus $j_1,j_2\in N_{G_{\mathcal{A}}}(i)$  by the definition of adjacency in $G_{\mathcal{A}}$ and $i\in V_{G_{\mathcal{A}}}\setminus S_{G_{\mathcal{A}}}$.

The uniqueness of $t$ is a direct consequence of \textit{(P2)}. Now, if $j\in N_{G_{\mathcal{A}}}(i)$ then either ($\exists\, s / i=av(A_s)\text{ and }j\in A_s)\text{ or }(\exists\, s / j=av(A_s)\text{ and }i\in A_s$). In the first case $s=t$ by unicity. In the latter case,  \textit{(P4)} implies that $j=av(A_s)\in A_{t}$. In any case $j\in A_{t}$, which proves $N_{G_{\mathcal{A}}}(i)\subset A_{t}$. Now, if $j\in A_{t}$ then $i\sim j$ by the definition of adjacency of $G_{\mathcal{A}}$. Hence, $j\in N_{G_{\mathcal{A}}}(i)$.\end{proof}

\begin{coro}\label{Coro labeling harmonic} With the notations as above, $\ell_{\mathcal{A}}$ is a weak harmonic labeling over $G_{\mathcal{A}}$.\end{coro}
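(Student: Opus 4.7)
The plan is to observe that, once Lemma \ref{Lemma Core} is in hand, the harmonicity condition reduces to a one-line computation exploiting the very definition of a harmonic subset. First I note that $\ell_{\mathcal{A}}$ is a bijection onto $I$ by construction (it is the identity on $V_{G_{\mathcal{A}}}=I$), so only equation \eqref{Eq:HarmonicityCondition} needs verification on non-leaf vertices.

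Fix $i\in V_{G_{\mathcal{A}}}\setminus S_{G_{\mathcal{A}}}$. By Lemma \ref{Lemma Core} there is a unique $t\in J$ with $i=av(A_t)$, and the closed neighborhood satisfies $N_{G_{\mathcal{A}}}(i)=A_t$. Since $G_{\mathcal{A}}$ is simple (no loops), the set of proper neighbors of $i$ is $A_t\setminus\{i\}$, so $\deg(i)=|A_t|-1$; this is positive by Remark \ref{Remark 3 Elements}, which guarantees $|A_t|\geq 3$. Then
$$\sum_{w\sim i}\ell_{\mathcal{A}}(w)\;=\;\sum_{w\in A_t\setminus\{i\}}w\;=\;\Bigl(\sum_{w\in A_t}w\Bigr)-i\;=\;|A_t|\cdot av(A_t)-i\;=\;|A_t|\,i-i\;=\;(|A_t|-1)\,i,$$
and dividing by $\deg(i)=|A_t|-1$ yields $i=\ell_{\mathcal{A}}(i)$, which is precisely \eqref{Eq:HarmonicityCondition}.

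I do not anticipate a real obstacle: all the structural work was absorbed into Lemma \ref{Lemma Core}, which identifies the closed neighborhood of every non-leaf vertex as a harmonic subset whose average equals that vertex's label. The only mild subtlety is remembering that $N_{G_{\mathcal{A}}}$ in Lemma \ref{Lemma Core} is the \emph{closed} neighborhood, while equation \eqref{Eq:HarmonicityCondition} sums over proper neighbors; this is handled by subtracting the $i$ term and invoking Remark \ref{Remark 3 Elements} so that the denominator $|A_t|-1$ does not vanish.
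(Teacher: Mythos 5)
Your proof is correct and follows essentially the same route as the paper's: both invoke Lemma \ref{Lemma Core} to identify the closed neighborhood of a non-leaf $i$ with the harmonic set $A_t$ satisfying $i=av(A_t)$, and then the harmonicity equation is just the definition of $av$. The only cosmetic difference is that you subtract the $i$ term to work with the open neighborhood and $\deg(i)=|A_t|-1$, whereas the paper writes the same identity directly in terms of the closed neighborhood and $\deg(i)+1$; your version makes the (elementary) equivalence explicit.
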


\begin{proof} If $i\in V_{\mathcal{A}}\setminus S_{\mathcal{A}}$, let $t$ be such that $i=av(A_t)$. Then $$\ell_{\mathcal{A}}(i)=i=av(A_t)=\frac{1}{|A_{t}|}\sum_{k\in A_{t}} k=\frac{1}{|N_{G_{\mathcal{A}}}(i)|}\sum_{k\in N_{G_{\mathcal{A}}}(i)} k=\frac{1}{\deg(i)+1}\sum_{\substack{k\sim i \\ k=i}} \ell_{\mathcal{A}}(k).$$\end{proof}

\begin{teo}\label{Theorem:MAIN} The maps $(G,\ell)\mapsto \mathcal{A}_{(G,\ell)}$ and $\mathcal{A}\mapsto (G_{\mathcal{A}},\ell_{\mathcal{A}})$ are mutually inverse.\end{teo}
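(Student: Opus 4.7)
The plan is to verify separately that each composition of the two maps is the identity, so they are mutually inverse bijections. Both directions are essentially bookkeeping once we invoke Lemma \ref{Lemma Core} and the Remark that every edge of a graph is incident to a non-leaf.

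First I would handle the direction $\mathcal{A}\mapsto (G_{\mathcal{A}},\ell_{\mathcal{A}})\mapsto \mathcal{A}_{(G_{\mathcal{A}},\ell_{\mathcal{A}})}$. Write $\mathcal{A}=\{A_t\}_{t\in J}$. By definition, $\mathcal{A}_{(G_{\mathcal{A}},\ell_{\mathcal{A}})}$ consists of the sets $\{\ell_{\mathcal{A}}(j):j\in N_{G_{\mathcal{A}}}(i)\}$ for $i\in V_{G_{\mathcal{A}}}\setminus S_{G_{\mathcal{A}}}$; since $\ell_{\mathcal{A}}$ is the identity, this equals $\{N_{G_{\mathcal{A}}}(i):i\in V_{G_{\mathcal{A}}}\setminus S_{G_{\mathcal{A}}}\}$. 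By Lemma \ref{Lemma Core}, the non-leaves of $G_{\mathcal{A}}$ are precisely the averages $av(A_t)$ (one per $t$ by \textit{(P2)}), and $N_{G_{\mathcal{A}}}(av(A_t))=A_t$. Hence $\mathcal{A}_{(G_{\mathcal{A}},\ell_{\mathcal{A}})}=\{A_t:t\in J\}=\mathcal{A}$.

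For the other direction $(G,\ell)\mapsto \mathcal{A}_{(G,\ell)}\mapsto (G_{\mathcal{A}_{(G,\ell)}},\ell_{\mathcal{A}_{(G,\ell)}})$, I would exhibit $\ell:V_G\to I$ as a candidate isomorphism of weakly labeled graphs onto $(G_{\mathcal{A}_{(G,\ell)}},\ell_{\mathcal{A}_{(G,\ell)}})$. Label preservation is immediate since $\ell_{\mathcal{A}}(\ell(v))=\ell(v)$, and $\ell$ is a bijection onto $I=V_{G_{\mathcal{A}_{(G,\ell)}}}$ by construction of $I$. The only substantive check is adjacency. If $v\sim w$ in $G$, then by the Remark at least one of them, say $v$, is a non-leaf, so $A_v\in \mathcal{A}_{(G,\ell)}$ satisfies $av(A_v)=\ell(v)$ and $\ell(w)\in A_v$, giving $\ell(v)\sim \ell(w)$ in $G_{\mathcal{A}_{(G,\ell)}}$. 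Conversely, if $\ell(v)\sim \ell(w)$ then there is some non-leaf $u\in V_G\setminus S_G$ with $\ell(v)=av(A_u)=\ell(u)$ and $\ell(w)\in A_u$ (or the symmetric statement); injectivity of $\ell$ forces $u=v$, whence $w\in N_G(v)$ and $v\sim w$ in $G$.

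The main (mild) obstacle is keeping the leaf/non-leaf asymmetry in focus: in $\mathcal{A}_{(G,\ell)}$ there is one harmonic subset per non-leaf and none per leaf, and conversely in $G_{\mathcal{A}}$ the leaves are exactly the integers in $I$ that do not arise as averages. Lemma \ref{Lemma Core} plus property \textit{(P2)} handles this cleanly, and the Remark that every edge meets a non-leaf guarantees that, in the first direction, we never need an $A_v$ associated to a leaf $v$ when exhibiting an edge. Once these points are noted, both composition identities follow without further calculation, completing the proof.
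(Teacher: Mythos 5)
Your proposal is correct and follows essentially the same route as the paper: the composition $\mathcal{A}\mapsto(G_{\mathcal{A}},\ell_{\mathcal{A}})\mapsto\mathcal{A}_{(G_{\mathcal{A}},\ell_{\mathcal{A}})}$ is handled via Lemma \ref{Lemma Core} together with \textit{(P2)}, and the other composition via the candidate isomorphism $v\mapsto\ell(v)$, checking adjacency in both directions using Remark \ref{Obs:dosverticesunonohoja} and the injectivity of $\ell$. Your presentation of the first composition is a touch more streamlined than the paper's explicit map $g$, but the underlying argument is identical.
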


\begin{proof}
Define the function $f:(G,\ell)\rightarrow (G_{\mathcal{A}_{(G,\ell)}},\ell_{\mathcal{A}_{(G,\ell)}})$ as $f(v)=\ell(v)$. We will show that $f$ is a graph isomorphism between $G$ and $G_{\mathcal{A}_{(G,\ell)}}$ and that $\ell_{\mathcal{A}_{(G,\ell)}}(f(v))=\ell(v)$. Since $\ell$ is a weak harmonic labeling then $f$ is a bijection between $V_G$ and $I$, so it suffices to show that $v\sim w$ if and only if $f(v)\sim f(w)$. Now, if $v\sim w$ then either $v$ or $w$ must belong to the set of non-leaves of $G$ (Remark \ref{Obs:dosverticesunonohoja}). Assume $v\in V_G\setminus S_G$. Then, by  definition of $\mathcal{A}_{(G,\ell)}$ it exists $A_v$ with $av(A_v)=\ell(v)$. Also, since $v\sim w$ then $w\in N_G(v)$ and hence $\ell(w)\in A_v$. Therefore $\ell(v)\sim \ell(w)$; that is, $f(v)\sim f(w)$.

Now, suppose $f(v)\sim f(w)$. Then $\ell(v)\sim \ell(w)$ in $G_{\mathcal{A}_{(G,\ell)}}$. Then, either ($\exists u\in V_G\setminus S_G$ such that $ \ell(v)=av(A_{u})$ and $\ell(w)\in A_{u}$) or ($\exists x\in V_G\setminus S_G$ such that $\ell(w)=av(A_{x})$ and $\ell(v)\in A_{x}$). Without loss of generality we may assume the first case happens. Since $\ell$ is a bijection then $w$ must belong to $N_G(v)$. Hence $w\sim v$. This proves that $G$ is isomorphic to $G_{\mathcal{A}_{(G,\ell)}}$.

Finally, from the definition of $\ell_{\mathcal{A}_{(G,\ell)}}$:
$$\ell_{\mathcal{A}_{(G,\ell)}}(f(v))=f(v)=\ell(v),$$ which finishes proving that $(G,\ell)\mapsto \mathcal{A}_{(G,\ell)}\mapsto (G_{\mathcal{A}_{(G,\ell)}},\ell_{\mathcal{A}_{(G,\ell)}})$ is the identity.

We now  prove that $\mathcal{A}\mapsto (G_{\mathcal{A}},\ell_{\mathcal{A}})\mapsto \mathcal{A}_{(G_{\mathcal{A}},\ell_{\mathcal{A}})}$ is the identity. 
Define $g:\mathcal{A}\rightarrow \mathcal{A}_{(G_{\mathcal{A}},\ell_{\mathcal{A}})}$ as follows: $g(A_t)=\tilde{A}_i$  where $i\in V_{G_{\mathcal{A}}}\setminus S_{G_{\mathcal{A}}}$ is such that $i=av(A_t)$ (Lemma \ref{Lemma Core}). Note that $g$ is one to one by (\textit{P2}) and the fact that $i\neq j$ implies $\tilde{A}_i\neq \tilde{A}_j$ in $\mathcal{A}_{(G_{\mathcal{A}},\ell_{\mathcal{A}})}$ (see properties of $\mathcal{A}_{(G,\ell)}$ before Lemma \ref{Lemma:  4 Properties}). Also, Lemma \ref{Lemma Core} implies that $g$ is onto. Since $\ell_{\mathcal{A}}(s)=s$ and $A_t=N_{G_{\mathcal{A}}}(i)$ (again by Lemma \ref{Lemma Core}) then $\tilde{A}_i=\{\ell_{\mathcal{A}}(s)\,:\, s\in N_{G_{\mathcal{A}}}(i)\}=N_{G_{\mathcal{A}}}(i)=A_t$. 

%Supongamos que existe $\tilde{A}_j\in\mathcal{A}_{(G_{\mathcal{A}},\ell_{\mathcal{A}})}$ que no está en $\mathcal{A}$. El $j$ debe ser un vertice de $V_{G_{\mathcal{A}}}\setminus S_{G_{\mathcal{A}}}$. Por Lemma 8 debe existir $A_r\in\mathcal{A}$ tal que $j=av(A_r)$. Pero $g(A_r)=\tilde{A}_j$ por definicion.
\end{proof}

Theorem \ref{Theorem:MAIN} provides a concrete way to compute weak harmonic labelings of finite graphs. A list of all possible weakly labeled graphs up to ten vertices can be found in the Appendix.

\subsection*{On non-connected graphs.} 
In \cite{BCPT}, harmonic labelings are defined for general graphs (not necessarily connected ones). However, the non-connected case gives rise to many superfluous examples, as the following construction shows. Given a graph $G$ and an harmonic labeling $\ell:V_G\rightarrow\mathbb{Z}$, let $H=\bigvee_{1\leq i\leq k} G_i$ be the disjoint union of $k\in\mathbb{Z}$ copies of $G$. Then, we can define an harmonic labeling $\ell_H$ over $H$ as follows: $$\ell_H(v)=k\ell(v)+i-1\text{, if $v\in V_{G_i}$}.$$

The definitions and results for the connected case can be extended to the non-connected case in a straightforward manner as long as every connected components of $G$ have at least three vertices. The case for connected components with less than three vertices give rise to uninteresting examples as these components are ``invisible" to the requirement of harmonicity and can be used to complete partial one to one labelings. Even with these requirements,  harmonically labeled non-connected graphs are in great amount uninteresting examples, which arise from simply disconnecting connected cases (see Figure \ref{Figure:DisconnectedOne} (Top)). The first non-trivial examples appear on 8-vertex graphs and are shown in Figure \ref{Figure:DisconnectedOne} (Bottom).

\medskip

\begin{figure}
\begin{minipage}{0.49\textwidth}
\begin{center}

\tiny

	\begin{tikzpicture}[scale=0.6]

	\node [draw,circle](0)at(0,0){$0$};
	\node [draw,circle](1)at(2,0){$1$};
	\node [draw,circle](2)at(4,0){$2$};
	
	\node [draw,circle](3)at(0,2){$3$};
	\node [draw,circle](4)at(2,2){$4$};
	\node [draw,circle](5)at(4,2){$5$};
	\draw (0)--(1);
	\draw (1)--(2);
	
	\draw (3)--(4);
	\draw (4)--(5);
	\end{tikzpicture}\\
	\vspace{5mm}
	$ \mathcal{A}=\lbrace 012; 345\rbrace$	
\end{center}
\end{minipage}\begin{minipage}{0.49\textwidth}
\begin{center}

\tiny

	\begin{tikzpicture}[scale=0.6]

	\node [draw,circle](0)at(0,0){$1$};
	\node [draw,circle](1)at(2,0){$3$};
	\node [draw,circle](2)at(4,0){$5$};
	
	\node [draw,circle](3)at(0,2){$0$};
	\node [draw,circle](4)at(2,2){$2$};
	\node [draw,circle](5)at(4,2){$4$};
	\draw (0)--(1);
	\draw (1)--(2);
	
	\draw (3)--(4);
	\draw (4)--(5);
	\end{tikzpicture}\\
	\vspace{5mm}
	$ \mathcal{A}=\lbrace 135; 024\rbrace$	
\end{center}
\end{minipage}

\vspace{0.5in}

\begin{minipage}{0.49\textwidth}
\begin{center}
\tiny	
	
	\begin{tikzpicture}[scale=0.6]

	\node [draw,circle](2) at(0,1.5){$4$};
	\node [draw,circle](0)at(0,4){$1$};	
	\node [draw,circle](3)at(-1.8,-0.8){$5$};
	\node [draw,circle](7)at(1.8,-0.8){$6$};

	\node [draw,circle](5)at(4,2){$3$};
	
	\node [draw,circle](8)at(2,4){$0$};
	\node [draw,circle](1)at(6,4){$2$};
	\node [draw,circle](6)at(4,-0.8){$7$};

	%\draw (1)to[bend right] (5);
	\draw (1) to (5);
	\draw (8) to (5);
	\draw (6) to (5);

	%\draw (4) to (2);
	\draw (2) to (0);
    \draw (2) to (3);   	
	\draw (7) to (2);

	\end{tikzpicture}\\
	\vspace{5mm}	
	$ \mathcal{A}=\lbrace 1456; 0237 \rbrace$
\end{center}
\end{minipage}\begin{minipage}{0.49\textwidth}
\begin{center}

\tiny

	\begin{tikzpicture}[scale=0.6]

	\node [draw,circle](2) at(0,1.5){$4$};
	\node [draw,circle](0)at(0,4){$0$};	
	\node [draw,circle](3)at(-1.8,-0.8){$5$};
	\node [draw,circle](7)at(1.8,-0.8){$7$};

	\node [draw,circle](5)at(4,2){$3$};
	
	\node [draw,circle](8)at(2,4){$1$};
	\node [draw,circle](1)at(6,4){$2$};
	\node [draw,circle](6)at(4,-0.8){$6$};

	%\draw (1)to[bend right] (5);
	\draw (1) to (5);
	\draw (8) to (5);
	\draw (6) to (5);

	%\draw (4) to (2);
	\draw (2) to (0);
    \draw (2) to (3);   	
	\draw (7) to (2);

	\end{tikzpicture}\\
	\vspace{5mm}	
	$ \mathcal{A}=\lbrace 0457; 1236 \rbrace$
\end{center}

\end{minipage}

\caption{\textbf{Top.} Trivial examples of disconnected weakly labeled graphs. \textbf{Bottom.} Non-trivial examples of disconnected weakly labeled graphs.}
\label{Figure:DisconnectedOne}
\end{figure}
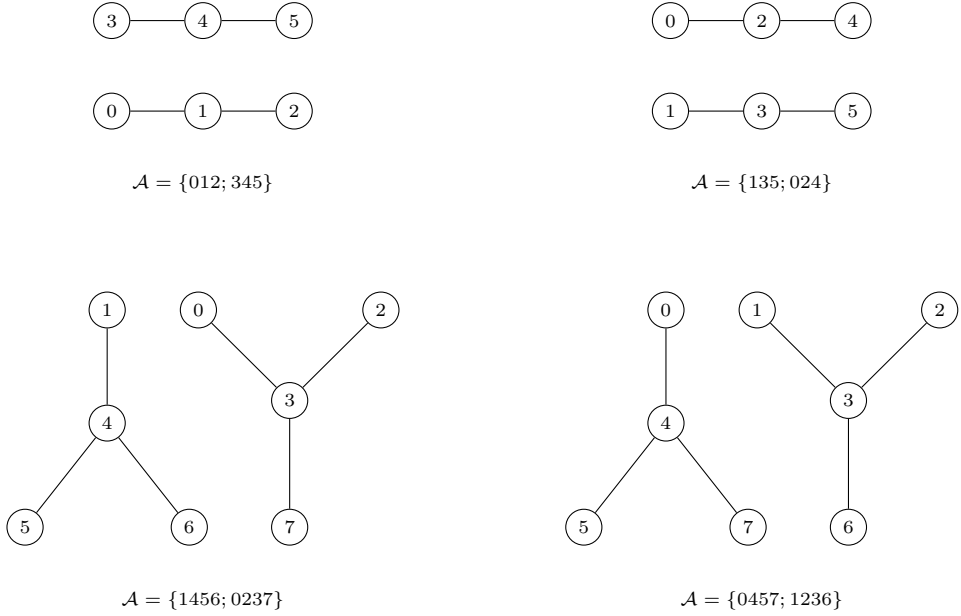

\normalsize

%\begin{figure}
%\begin{minipage}{0.49\textwidth}
%\begin{center}
%\tiny	
%	
%	\begin{tikzpicture}[scale=0.6]
%	
%	
%	\node [draw,circle](2) at(0,1.5){$4$};
%	\node [draw,circle](0)at(0,4){$1$};	
%	\node [draw,circle](3)at(-1.8,-0.8){$5$};
%	\node [draw,circle](7)at(1.8,-0.8){$6$};
%	
%	
%	\node [draw,circle](5)at(4,2){$3$};
%	
%	\node [draw,circle](8)at(2,4){$0$};
%	\node [draw,circle](1)at(6,4){$2$};
%	\node [draw,circle](6)at(4,-0.8){$7$};
%	
%	
%	
%	%\draw (1)to[bend right] (5);
%	\draw (1) to (5);
%	\draw (8) to (5);
%	\draw (6) to (5);
%	
%	
%	%\draw (4) to (2);
%	\draw (2) to (0);
%    \draw (2) to (3);   	
%	\draw (7) to (2);
%	
%	
%	
%	
%	\end{tikzpicture}\\
%	\vspace{5mm}	
%	$ \mathcal{A}=\lbrace 1456; 0237 \rbrace$
%\end{center}
%\end{minipage}\begin{minipage}{0.49\textwidth}
%\begin{center}
%
%\tiny
%
%	\begin{tikzpicture}[scale=0.6]
%	
%	
%	\node [draw,circle](2) at(0,1.5){$4$};
%	\node [draw,circle](0)at(0,4){$0$};	
%	\node [draw,circle](3)at(-1.8,-0.8){$5$};
%	\node [draw,circle](7)at(1.8,-0.8){$7$};
%	
%	
%	\node [draw,circle](5)at(4,2){$3$};
%	
%	\node [draw,circle](8)at(2,4){$1$};
%	\node [draw,circle](1)at(6,4){$2$};
%	\node [draw,circle](6)at(4,-0.8){$6$};
%	
%	
%	
%	%\draw (1)to[bend right] (5);
%	\draw (1) to (5);
%	\draw (8) to (5);
%	\draw (6) to (5);
%	
%	
%	%\draw (4) to (2);
%	\draw (2) to (0);
%    \draw (2) to (3);   	
%	\draw (7) to (2);
%	
%	
%	
%	
%	\end{tikzpicture}\\
%	\vspace{5mm}	
%	$ \mathcal{A}=\lbrace 0457; 1236 \rbrace$
%\end{center}
%
%\end{minipage}
%\caption{Non-trivial examples of disconnected weakly labeled graphs.}
%\label{Figure:DisconnectedTwo}
%\end{figure}
%
%\normalsize

The same characterization given in Theorem \ref{Theorem:MAIN} also holds for non-connected graphs provided that the condition (\textit{P5}) is dropped from Lemma \ref{Lemma:  4 Properties}. Actually, it is straightforward to see that $(G,\ell)$ (resp. $G_{\mathcal{A}}$) is connected if and only if $\mathcal{A}_{(G,\ell)}$ (resp. $\mathcal{A}$) satisfies $(P5)$. Particularly, if we let $\tilde{\mathcal{G}}_{\mathbb{Z}}\subset \mathcal{G}_{\mathbb{Z}}$ denote the set of pairs $(G,\ell)$ for which $S_G=\emptyset$ ($G$ not necessarily connected) then $\tilde{\mathcal{G}}_{\mathbb{Z}}$ is the set of harmonically labeled graphs as defined in \cite{BCPT}. From the above considerations, we obtain the following characterization of harmonic labelings.

\begin{teo}\label{Theorem:MAINMAIN} A (non-necessarily connected) graph $G$ admits an harmonic labeling $\ell$ if and only if $S_G=\emptyset$ and $\mathcal{A}_{(G,\ell)}$ satisfies:
\begin{enumerate}
\item[(ZP1)] For every $k\in \mathbb{Z}$ there exists $C\in \mathcal{A}_{(G,\ell)}$ such that $av(C)=k$.
\item[(ZP2)] $av(A)\neq av(B)$ if $A\neq B$.
\item[(ZP3)] If $av(A)\in B$ then $av(B)\in A$.
%\item $|C|\geq 3$ for all $C\in\mathcal{A}_{(G,\ell)}$
%\item $\mathcal{A}_{(G,\ell)}$ is connected
\end{enumerate}\end{teo}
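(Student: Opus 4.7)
The plan is to obtain Theorem \ref{Theorem:MAINMAIN} as a direct corollary of Theorem \ref{Theorem:MAIN} extended to non-connected graphs (i.e., with condition \textit{(P5)} dropped, as noted in the paragraph preceding the theorem), specialized to the case $I=\mathbb{Z}$ and $S_G=\emptyset$. Recall that an harmonic labeling in the sense of \cite{BCPT} is nothing but a weak harmonic labeling onto $\mathbb{Z}$ of a graph without leaves; thus $\tilde{\mathcal{G}}_{\mathbb{Z}}$ is precisely the image of the subset of $\mathcal{G}_{\mathbb{Z}}$ with $S_G=\emptyset$ under the bijection $(G,\ell)\mapsto \mathcal{A}_{(G,\ell)}$. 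Under this bijection, the class corresponding to $\tilde{\mathcal{G}}_{\mathbb{Z}}$ consists of collections of non-trivial harmonic subsets of $\mathbb{Z}$ satisfying \textit{(P1)}--\textit{(P4)} and with $\bigcup_{C\in\mathcal{A}} C=\mathbb{Z}$. The whole proof then reduces to showing that, in this regime, \textit{(P1)}--\textit{(P4)} is equivalent to \textit{(ZP1)}--\textit{(ZP3)}.

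Since \textit{(ZP2)} and \textit{(ZP3)} are literal restatements of \textit{(P2)} and \textit{(P4)}, the only real work is to show that the conjunction of \textit{(P1)} and \textit{(P3)} (together with the ambient assumptions $I=\mathbb{Z}$ and $S_G=\emptyset$) is equivalent to \textit{(ZP1)}. First I would handle the implication ``harmonic labeling implies \textit{(ZP1)}'': given $k\in\mathbb{Z}$, the bijectivity of $\ell$ and the hypothesis $S_G=\emptyset$ produce a non-leaf vertex $v=\ell^{-1}(k)$, whence $A_v\in\mathcal{A}_{(G,\ell)}$ satisfies $av(A_v)=\ell(v)=k$. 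Conversely, assuming \textit{(ZP1)}, I would check \textit{(P1)} by observing that every $k\in\mathbb{Z}$ lies in some $C\in\mathcal{A}$ because $av(C)\in C$ for harmonic $C$, so $\bigcup_{C}C=\mathbb{Z}$ (an interval), and I would check \textit{(P3)} by noting that the existence of $C\in\mathcal{A}$ with $av(C)=t$ is granted unconditionally by \textit{(ZP1)} whenever $t\in\mathbb{Z}$, in particular whenever $t\in A\cap B$ with $A\neq B$.

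Assembling these equivalences with the non-connected version of Theorem \ref{Theorem:MAIN}, both directions of Theorem \ref{Theorem:MAINMAIN} follow: an harmonic labeling $(G,\ell)$ yields a collection $\mathcal{A}_{(G,\ell)}$ verifying \textit{(ZP1)}--\textit{(ZP3)} by the forward implication above and Lemma \ref{Lemma:  4 Properties}; and conversely, starting from $S_G=\emptyset$ and \textit{(ZP1)}--\textit{(ZP3)} one recovers \textit{(P1)}--\textit{(P4)}, which via the extended bijection produces the sought harmonic labeling on $G$. I do not anticipate any substantive obstacle: the proof is essentially a bookkeeping exercise translating the properties \textit{(P1)} and \textit{(P3)} into the single, more compact property \textit{(ZP1)} under the collapse that occurs when the interval $I$ is all of $\mathbb{Z}$ and there are no leaves. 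The only point demanding a bit of care is to keep straight that \textit{(P5)} plays no role (since we have already passed to the non-connected extension of Theorem \ref{Theorem:MAIN}) and that \textit{(P2)} is retained explicitly, despite being implicit in a reading of \textit{(P3)} via \textit{(ZP1)}, because one still needs distinct averages to correspond to distinct vertices of $G_{\mathcal{A}}$.
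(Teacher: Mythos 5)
Your proposal is correct and follows essentially the same route as the paper: the paper's proof is the one-line observation that, under the non-connected extension of Theorem \ref{Theorem:MAIN} with $I=\mathbb{Z}$ and $S_G=\emptyset$, property \textit{(P1)} transforms into \textit{(ZP1)} and \textit{(P3)} is absorbed by \textit{(ZP1)}, while \textit{(ZP2)} and \textit{(ZP3)} restate \textit{(P2)} and \textit{(P4)}. Your write-up merely spells out the same bookkeeping (including the correct use of $S_G=\emptyset$ and bijectivity to realize every integer as an average, and of $av(C)\in C$ to recover the covering condition) in more detail.
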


\begin{proof} The result follows from Theorem \ref{Theorem:MAIN} by noting that \emph{P1} transforms into \emph{ZP1} and that \emph{P3} is covered by \emph{ZP1}.\end{proof}

%\begin{teo}\label{Teo:MAINDISCONNECTED} A simple graph $G$ such that every connected component of $G$ has at least three vertices Let $\mathcal{A}$ be the collection $\mathcal{A}_{(G,\ell)}$ of harmonic subsets of $\mathbb{Z}$  defined as above. For $A,B\in\mathcal{A}$, we have:\begin{enumerate}
%\item[(P1)] $\bigcup_{C\in\mathcal{A}_{(G,\ell)}} C$ is an integer interval. 
%\item[(P2)] $av(A)\neq av(B)$ if $A\neq B$
%\item[(P3)] If $t\in A\cap B$, $A\neq B$, then there exists $C\in \mathcal{A}$ such that $av(C)=t$.
%\item[(P4)] If $av(A)\in B$ then $av(B)\in A$
%\item[(P5)] There exists a sequence $i_1,\ldots,i_r$ such that $A_{i_1}=A$, $A_{i_r}=B$ and $av(A_{i_j})\in A_{i_{j+1}}$ for $1\leq j\leq r-1$ (connectedness condition).
%%\item $|C|\geq 3$ for all $C\in\mathcal{A}_{(G,\ell)}$
%%\item $\mathcal{A}_{(G,\ell)}$ is connected
%\end{enumerate}\end{teo}

\section{Multigraphs and total labelings}

In this section we extend the main definitions and results of weak harmonic labelings to multigraphs and provide a generalization of Theorem \ref{Theorem:MAIN} in this context. \textit{All multigraphs are connected, loopless and have bounded degree (see Remark \ref{generalizacion a loops}).} Also, since the identity of the edges is indifferent to the theory, we consider all parallel edges to be indistinguishable.

Recall that a (finite) \textit{multiset} $\mathcal{M}$ is a pair $(A,m)$ where $A$ is a (finite) non-empty set and $m:A\rightarrow \mathbb{N}$ is a function giving the \textit{multiplicity} of each element in $A$ (the number of instances of that element). The cardinality of $\mathcal{M}$ is the number $\vert \mathcal{M}\vert=\sum_{x\in A} m(x)$. If $A=\lbrace x_1,x_2,\cdots x_n\rbrace$ we shall often write $\mathcal{M}=\lbrace x_1^{m(x_1)},x_2^{m(x_2)},\ldots, x_n^{m(x_n)} \rbrace$. If $m(x_i)=1$ we simply write $x_i$.

%If $G(V,E,m)$ is a multigraph we let $m_G(v,w)=m_G(w,v)$ denote the number of edges between vertices $v,w\in V$. In particular, $m_G(v,v)$ is the number of loops around $v\in V$ and $m_G(v,w)=0$ implies $v$ and $w$ are not adjacent. If $m(v,w)=k\geq 1$ we shall often write $\{u,v\}^k\in \mathcal{E}$. A vertex $v\in G$ is said to be a \textit{leaf} if $m_G(v,w)\neq 0$  for exactly one $w\neq v$. As in the simple case, we shall denote $S$ the set of leaves of a multigraph.

Given a multigraph $G$ we let $m_G(v,w)=m_G(w,v)\in \mathbb{Z}_{\geq 0}$ denote the number of edges between vertices $v,w\in V_G$, $v\neq w$. If $m_G(v,w)\neq 0$ then $v$ and $w$ are adjacent and we write $v\sim w$. If $m(v,w)=k\geq 2$ we shall often write $v\stackrel{k}{\sim} w$ or $\{v,w\}^k\in G$. A vertex $v\in G$ is a \textit{leaf} if $m_G(v,w)\neq 0$ for exactly one $w\neq v$. As in the simple case, we shall denote $S_G$ the set of leaves of the multigraph $G$.
%
%
%We can describe a multigraph $G=(V,\mathcal{E})$ where $\mathcal{E}$ is the multiset of edges; that is, $\mathcal{E}=(V\times V,m_G)$ where $m_G(v,w)=m_G(w,v)$ denotes the number of edges between vertices $v,w\in V$. In particular, $m_G(v,v)$ is the number of loops around $v\in V$ and $m_G(v,w)=0$ implies $v$ and $w$ are not adjacent. If $m(v,w)=k\geq 1$ we shall often write $\{u,v\}^k\in \mathcal{E}$. A vertex $v\in G$ is said to be a \textit{leaf} if $m_G(v,w)\neq 0$  for exactly one $w\neq v$. As in the simple case, we shall denote $S$ the set of leaves of a multigraph.
%
%\begin{obs}\label{notation neighborhood} Note that we can identify $N_G(v)$ with the multiset $\{v_1^{k_1},\ldots,v_t^{k_t}\}$ where $V(N_G(v))=\{v_1,\ldots,v_t\}$ and $k_i=m_G(v,v_i)$ for every $v_i\in V(N_G(v))$, where $v_i^k$ means that $\{v,v_i\}^k\in \mathcal{E}_{N_G(V)}$.\end{obs}

The \textit{simplification} of a multigraph $G$ is the simple graph $sG$ where $V_{sG}=V_G$ and $\{u,v\}\in E_{sG}$ if and only if $m_G(v,w)\neq 0$ ($v\neq w$). We shall call the \textit{closed multi neighborhood} of $v\in V_G$ in a multigraph $G$ to the multiset $\mathcal{N}_G(v)=\{v\}\cup\{w^{m_G(v,w)}\,:\, v\sim w\}$. Thus, the closed multi neighborhood of $v$ keeps track of the multiplicities of the vertices adjacent to $v$ as well. The (standard) close neighborhood of $v$ is $N_{sG}(v)\subset V_G.$

%A \textit{multigraph isomorphism} between two multigraphs $G$ and $H$ is a bijection $f:V_G\rightarrow V_H$ such that $m_H(f(v),f(w))=m_G(v,w)$ for every $v,w\in V$, $v\neq w$ (or equivalently, $v\stackrel{k}{\sim} w$ if and only if $f(v)\stackrel{k}{\sim} f(w)$).

\begin{defi}\label{Def:HarmonicMulti} A \emph{weak harmonic labeling} of a multigraph $G$ is a bijective function $\ell: V_G\rightarrow I$ such that 
$$
\ell(v)=\frac{1}{deg(v)}\sum_{w\sim v}m_G(v,w)\phi(w)\hspace{0.2in}\forall v \in V_G\setminus S_G.
$$\end{defi}

Figure \ref{Figure:MultilabelingONE} shows some examples of harmonic labelings of finite multigraphs. Note that the presence of at least two leaves is still a requirement for the existence of a weak harmonic labeling.

\footnotesize

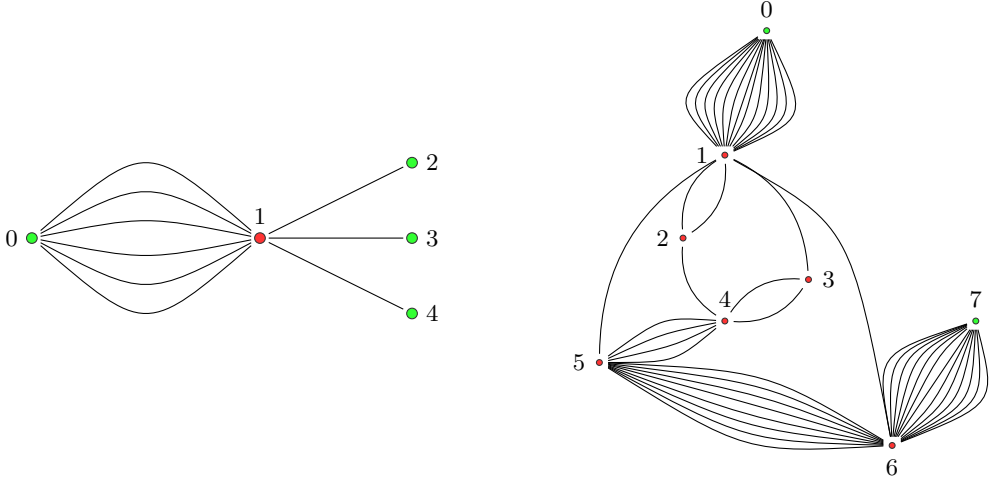
\begin{figure}
\begin{minipage}{0.49\textwidth}
\begin{center}

\begin{tikzpicture}[scale=1]
	\node(0)at(0,0){};
	\filldraw[fill=green!80,draw=black!80] (0) circle (2pt)
	node[left=2pt] {$0$}; 
	%\node [draw,circle,fill=green](0)at(0,0){$0$};
	\node(1)at(3,0){};
	\filldraw[fill=red!80,draw=black!80] (1) circle (2pt)
	node[above=2pt] {$1$}; 
	%\node [draw,circle,fill=red](1) at(3,0){$1$};
	\node(2)at(5,1){};
	\filldraw[fill=green!80,draw=black!80] (2) circle (2pt)
	node[right=2pt] {$2$}; 
	%\node [draw,circle,fill=green](2)at(5,1){$2$};	
	\node(3)at(5,0){};
	\filldraw[fill=green!80,draw=black!80] (3) circle (2pt)
	node[right=2pt] {$3$}; 
	%\node [draw,circle,fill=green](3)at(5,0){$3$};
	\node(4)at(5,-1){};
	\filldraw[fill=green!80,draw=black!80] (4) circle (2pt)
	node[right=2pt] {$4$}; 
    %\node [draw,circle,fill=green](4)at(5,-1){$4$};

	%\node [above] at (2,-5.6) {\footnotesize $(8)$};
	%\node [above] at (-0.5,-4) {\footnotesize $(4)$};
	%\node [above] at (2,-2.9) {\footnotesize $(2)$};
	%\node [right] at (0,-0.1) {\footnotesize $(2)$};

	\draw (1)to  (2);
	\draw (1)to  (3);
	\draw (1)to  (4);

	\draw (0) .. controls (1.5,1.3) .. (1);
	\draw (0) .. controls (1.5,0.8) .. (1);
	\draw (0) .. controls (1.5,0.3) .. (1);
	\draw (0) .. controls (1.5,-0.3) .. (1);
	\draw (0) .. controls (1.5,-0.8) .. (1);
	\draw (0) .. controls (1.5,-1.3) .. (1);
	
\end{tikzpicture}
\end{center}
\end{minipage}\begin{minipage}{0.49\textwidth}
\begin{center}

\begin{tikzpicture}[scale=.55]
	\node(-1)at(2,4){};
	\filldraw[fill=green!80,draw=black!80] (-1) circle (2pt)
	node[above=2pt] {$0$}; 
	%\node [draw,circle,fill=green](-1)at(2,4){$0$};
	\node(0)at(1,1){};
	\filldraw[fill=red!80,draw=black!80] (0) circle (2pt)
	node[left=3pt] {$1$}; 
	%\node [draw,circle,fill=red](0)at(1,1){$1$};
	\node(1)at(0,-1){};
	\filldraw[fill=red!80,draw=black!80] (1) circle (2pt)
	node[left=2pt] {$2$}; 
	%\node [draw,circle,fill=red](1) at(0,-1){$2$};
	\node(2)at(3,-2){};
	\filldraw[fill=red!80,draw=black!80] (2) circle (2pt)
	node[right=2pt] {$3$}; 
	%\node [draw,circle,fill=red](2)at(3,-2){$3$};	
	\node(3)at(1,-3){};
	\filldraw[fill=red!80,draw=black!80] (3) circle (2pt)
	node[above=2pt] {$4$}; 
	%\node [draw,circle,fill=red](3)at(1,-3){$4$};
	\node(4)at(-2,-4){};
	\filldraw[fill=red!80,draw=black!80] (4) circle (2pt)
	node[left=2pt] {$5$}; 
	%\node [draw,circle,fill=red](4)at(-2,-4){$5$};
	\node(5)at(5,-6){};
	\filldraw[fill=red!80,draw=black!80] (5) circle (2pt)
	node[below=2pt] {$6$}; 
	%\node [draw,circle,fill=red](5)at(5,-6){$6$};
	\node(6)at(7,-3){};
	\filldraw[fill=green!80,draw=black!80] (6) circle (2pt)
	node[above=2pt] {$7$}; 
	%\node [draw,circle,fill=green](6)at(7,-3){$7$};

	%\node [above] at (2,-5.6) {\footnotesize $(8)$};
	%\node [above] at (-0.5,-4) {\footnotesize $(4)$};
	%\node [above] at (2,-2.9) {\footnotesize $(2)$};
	%\node [right] at (0,-0.1) {\footnotesize $(2)$};
	
	\draw (0)to [bend right] (1);
	\draw (0)to [bend left] (1);
	\draw (0)to [bend left] (2);
	\draw (0)to [bend right] (4);

	\draw (-1)to (0);
	\draw (-1) .. controls (1.2,2.4) .. (0);
	\draw (-1) .. controls (0.9,2.4) .. (0);
	\draw (-1) .. controls (0.6,2.4) .. (0);
	\draw (-1) .. controls (0.3,2.4) .. (0);
	\draw (-1) .. controls (0.0,2.4) .. (0);
	\draw (-1) .. controls (-0.3,2.4) .. (0);
	\draw (-1) .. controls (3,1.9) .. (0);
	\draw (-1) .. controls (2.7,1.9) .. (0);
	\draw (-1) .. controls (2.4,1.9) .. (0);
	\draw (-1) .. controls (2.1,1.9) .. (0);
	\draw (-1) .. controls (1.8,1.9) .. (0);	
	\draw (-1) .. controls (1.5,1.9) .. (0);
	
	\draw (0).. controls (4,-0.7) .. (5);
	
	\draw (2)to [bend left] (3);
	\draw (2)to [bend right] (3);
	\draw (1)to [bend right] (3);
	
	%\draw (3)to (4);
	\draw (3) .. controls (-0.5,-2.9) .. (4);
	\draw (3) .. controls (-0.5,-3.3) .. (4);
	\draw (3) .. controls (-0.2,-3.6) .. (4);
	\draw (3) .. controls (-0.2,-4) .. (4);
	
	%\draw (4)to (5);
	\draw (4) .. controls (2,-4.2) .. (5);
	\draw (4) .. controls (2,-4.5) .. (5);
	\draw (4) .. controls (2,-4.8) .. (5);
	\draw (4) .. controls (2,-5.1) .. (5);
	\draw (4) .. controls (1.5,-5.3) .. (5);	
	\draw (4) .. controls (1.5,-5.6) .. (5);
	\draw (4) .. controls (1.5,-5.9) .. (5);
	\draw (4) .. controls (1.5,-6.2) .. (5);
	
	\draw (5) .. controls (4.6,-3.8) .. (6);
	\draw (5) .. controls (4.9,-3.8) .. (6);
	\draw (5) .. controls (5.2,-3.8) .. (6);
	\draw (5) .. controls (5.5,-3.8) .. (6);
	\draw (5) .. controls (5.8,-3.8) .. (6);
	\draw (5) .. controls (6.1,-3.8) .. (6);
	\draw (5)to (6);
	\draw (5) .. controls (6,-5) .. (6);
	\draw (5) .. controls (6.3,-5) .. (6);
	\draw (5) .. controls (6.6,-5) .. (6);
	\draw (5) .. controls (6.9,-5) .. (6);
	\draw (5) .. controls (7.2,-5) .. (6);
	\draw (5) .. controls (7.5,-5) .. (6);

\end{tikzpicture}
\end{center}
\end{minipage}
\caption{Examples of weak harmonic labeling on multigraphs}
\label{Figure:MultilabelingONE}
\end{figure}
\normalsize

We next show that Theorem \ref{Theorem:MAIN} can be generalized to multigraphs.

\begin{defi} For a multiset $\mathcal{M}=(A,m)$ with finite non-empty $A\subset \mathbb{Z}$ we let $$av(\mathcal{M})=\frac{1}{|\mathcal{M}|}\sum\limits_{k\in A}m(k)k.$$ We say that $\mathcal{M}$ is an \emph{harmonic multiset of $\mathbb{Z}$} if $av(\mathcal{M})\in A$.\end{defi}

\begin{obs}\label{Remark 3 Elements Multigraphs} As for harmonic subsets, the multisets whose underlying set is a unit set of $\mathbb{Z}$ are (trivial) harmonic multisets of $\mathbb{Z}$. Also, there are no harmonic multisets of $\mathbb{Z}$ whose underlying set has two elements. Therefore, any non-trivial harmonic multiset of $\mathbb{Z}$ has an underlying set of at least three elements.\end{obs}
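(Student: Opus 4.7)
The plan is to dispatch the three claims by direct inspection of the definition of $av(\mathcal{M})$; none of them needs more than an elementary convex-combination argument, and the proof structurally mirrors that of Remark \ref{Remark 3 Elements} for plain subsets.

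First, for the triviality claim, I would take $\mathcal{M}=(\{a\},m)$ with $m(a)=k\geq 1$ and simply compute $av(\mathcal{M})=\frac{1}{k}\cdot k\cdot a=a\in\{a\}$, so every unit-underlying-set multiset is an harmonic multiset of $\mathbb{Z}$. This is the analogue of the trivial harmonic subsets.

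Second, for the impossibility of two-element underlying sets, I would write the underlying set as $\{a,b\}$ with $a<b$ and multiplicities $m(a),m(b)\geq 1$, so that
$$av(\mathcal{M})=\frac{m(a)\,a+m(b)\,b}{m(a)+m(b)}.$$
Since both multiplicities are strictly positive, this is a strict convex combination of $a$ and $b$, hence lies in the open interval $(a,b)$, and in particular $av(\mathcal{M})\notin\{a,b\}=A$. Therefore no such $\mathcal{M}$ can be harmonic.

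The final conclusion is then immediate: a non-trivial harmonic multiset of $\mathbb{Z}$ has, by definition, an underlying set that is not a singleton, and by the previous step it cannot have exactly two elements either; hence its underlying set must contain at least three distinct integers. The only conceivable pitfall is to forget that the multiplicities enter the weighted average but do not alter the strict convexity argument, since $m(a),m(b)\geq 1$ is all that is used; so there is no real obstacle.
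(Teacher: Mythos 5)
Your proposal is correct, and it is exactly the (implicit) argument the paper relies on: the remark is stated without proof, and the strict-convex-combination observation that $a<av(\mathcal{M})<b$ whenever both multiplicities are positive is the natural verification, mirroring Remark \ref{Remark 3 Elements} for plain subsets. Nothing is missing.
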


%Harmonic multisets in $\mathbb{Z}$ are more versatile that harmonic subsets, as the following result shows.
%
%\begin{lema}[VER SI VA]
% For each $0<k<n$  there exist  an harmonic multiset $\mathcal{M}=([0,n],m)$ such that $av(\mathcal{M})=k$.
%\end{lema}
%\begin{proof}
%Let the set $S=[0;n]$ and $ \overline{x}=\dfrac{1}{n+1}\sum\limits_{i=0}^{n}i=\dfrac{n}{2}$ and $\mathcal{H}(i)=h_i$
%\begin{enumerate}
%\item We choose $0<k\leq\overline{x}$.\\
%
%Let's take $h_{k-1}=\dfrac{(n+1)}{2}(n-2k)+1$
%then, \\
%\begin{align*}
%av(\mathcal{H})&= \dfrac{0+1+\ldots+(k-2)+h_{k-1}(k-1)+k+\ldots+n}{h_{k-1}+n} \\
%&=\dfrac{(k-1)h_{k-1}+\frac{n(n+1)}{2}-(k-1)}{h_{k-1}+n}\\
% &= k
%\end{align*}
%
%\item We choose $\overline{x}<k<n$.\\
%  
%Let's take $h_{k+1}=\dfrac{n}{2}(2k-(n+1))+k+1$
%then, \\
%\begin{align*}
%av(\mathcal{H})&= \dfrac{0+1+\ldots+k+h_{k+1}(k+1)+(k+2)+\ldots+n}{h_{k+1}+n}  \\
%&=\dfrac{(k+1)h_{k+1}+\frac{n(n+1)}{2}-(k+1)}{h_{k-1}+n}\\
% &= k
%\end{align*}  
%  
%\end{enumerate}
%
%\end {proof}

Analgously to the simple case, we consider pairs $(G,\ell)$ for a multigraph $G$ and a weak harmonic labeling $\ell:V_G\rightarrow I$ and define an \textit{isomorphism} between two weakly labeled multigraphs $(G,\ell)$ and $(G',\ell')$ as a multigraph isomorphism $f:G\rightarrow G'$ such that $\ell(f(v))=\ell(v)$ for every $v\in V_G$.  We let $\mathcal{MG}_I$ denote the quotient set of pairs $(G,\ell)$, $\ell:V_G\rightarrow I$, under the isomorphism relation.

Given $(G,\ell)\in\mathcal{MG}_I$ we consider the collection $$\mathcal{MA}_{(G,\ell)}=\{\mathcal{B}_{v}\,:\, v\in V_G\setminus S_G\}$$ where $\mathcal{B}_{v}=\{\ell(v)\}\cup \{\ell(w)^{m_G(v,w)}\,:\, w\sim v\}$. As in the simple case, it is easy to see that $\mathcal{MA}_{(G,\ell)}$ is a collection of non-trivial harmonic multisets of $\mathbb{Z}$ verifying  $av(\mathcal{B}_{v})=\ell(v)$ that satisfies the (analoguous) conditions than Lemma \ref{Lemma:  4 Properties}. Namely, if  $A_{\mathcal{M}}$ stands for the underlying set of the multiset $\mathcal{M}$:

\begin{lema}\label{Lemma:  4 Properties Multigraphs} 

Let $\mathcal{MA}$ be the collection $\mathcal{MA}_{(G,\ell)}$ of harmonic multisets of $\mathbb{Z}$  defined as above. For $\mathcal{B},\mathcal{C}\in\mathcal{MA}$, we have:

%With the notations and definitions as above, and for $\mathcal{B},\mathcal{C}\in\mathcal{MA}_{(G,\ell)}$, we have:
\begin{enumerate}
\item[(MP1)] $\bigcup_{\mathcal{D}\in\mathcal{MA}} A_{\mathcal{D}}=I$.
\item[(MP2)] $av(\mathcal{B})\neq av(\mathcal{C})$ if $\mathcal{B}\neq \mathcal{C}$.
\item[(MP3)] If $t\in A_{\mathcal{B}}\cap A_{\mathcal{C}}$ then there exists $\mathcal{D}\in \mathcal{MA}$ such that $av(\mathcal{D})=t$.
\item[(MP4)] If $av(\mathcal{B})^k\in \mathcal{C}$ then $av(\mathcal{C})^k\in \mathcal{B}$.
\item[(MP5)] There exists a sequence $\mathcal{B}_{i_1},\ldots,\mathcal{B}_{i_r}\subset\mathcal{MA}$ such that $\mathcal{B}_{i_1}=\mathcal{B}$, $\mathcal{B}_{i_r}=\mathcal{C}$ and $av(\mathcal{B}_{i_j})\in \mathcal{B}_{i_{j+1}}$ for $1\leq j\leq r-1$ (connectedness condition).
%\item[(P5)] The multiplicity of $av(\mathcal{B})$ in $\mathcal{B}$ is $1$.
%\item $|C|\geq 3$ for all $C\in\mathcal{A}_{(G,\ell)}$
%\item $\mathcal{A}_{(G,\ell)}$ is connected
\end{enumerate}\end{lema}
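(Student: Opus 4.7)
The plan is to mirror the verification of properties \textit{(P1)}--\textit{(P5)} for the simple case, keeping careful track of multiplicities when moving from subsets to multisets. The observation that pins everything down is that, by definition, $av(\mathcal{B}_v)=\ell(v)$ for every non-leaf $v$ (this is precisely the harmonicity condition of Definition \ref{Def:HarmonicMulti}), so the passage from the multiset world back to the graph is governed by the bijection $\ell$.

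For (MP1), I would invoke Remark \ref{Obs:dosverticesunonohoja} (whose statement carries over verbatim to the simplification $sG$): every $t\in I$ equals $\ell(u)$ for a unique $u\in V_G$, and $u$ is either itself a non-leaf, in which case $t=\ell(u)\in A_{\mathcal{B}_u}$, or it is adjacent to some non-leaf $v$, in which case $t=\ell(u)\in A_{\mathcal{B}_v}$. (MP2) is immediate from the injectivity of $\ell$: if $\mathcal{B}_v\neq\mathcal{B}_w$ then in particular $v\neq w$, whence $av(\mathcal{B}_v)=\ell(v)\neq\ell(w)=av(\mathcal{B}_w)$. For (MP3), if $t\in A_{\mathcal{B}_v}\cap A_{\mathcal{B}_w}$ with $\mathcal{B}_v\neq\mathcal{B}_w$, let $u\in V_G$ be the unique vertex with $\ell(u)=t$. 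If $u\in\{v,w\}$ we take $\mathcal{D}=\mathcal{B}_u$; otherwise $u$ is simultaneously adjacent to $v$ and to $w$ in $sG$, so $u$ is not a leaf, and again $\mathcal{D}=\mathcal{B}_u$ satisfies $av(\mathcal{D})=t$.

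For (MP4) I would unfold the definitions: if $av(\mathcal{B}_v)^k\in\mathcal{B}_w$, then $\ell(v)^k$ appears in the multiset $\{\ell(w)\}\cup\{\ell(u)^{m_G(w,u)}:u\sim w\}$; injectivity of $\ell$ together with $v\neq w$ (which follows from $\mathcal{B}_v\neq\mathcal{B}_w$ and (MP2); the degenerate $v=w$ case forces $k=1$ and holds trivially) forces $m_G(w,v)=k$, and the symmetry $m_G(v,w)=m_G(w,v)$ then yields $\ell(w)^k=av(\mathcal{B}_w)^k\in\mathcal{B}_v$. For (MP5), I would argue that any path in $G$ connecting two non-leaves $v$ and $w$ passes only through non-leaves at interior positions, since a leaf has degree one in $sG$ and cannot appear strictly between two distinct adjacent vertices of a path. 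Hence a shortest $v$-$w$ path $v=u_0\sim u_1\sim\cdots\sim u_r=w$ yields the desired sequence $\mathcal{B}_{u_0},\ldots,\mathcal{B}_{u_r}$, with $av(\mathcal{B}_{u_j})=\ell(u_j)\in A_{\mathcal{B}_{u_{j+1}}}$ because $u_j\sim u_{j+1}$ in $sG$.

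The only step that requires more than purely formal manipulation is (MP4), where one has to translate an equality in $\mathbb{Z}_{\geq 0}$ about multiplicities back into the symmetry $m_G(v,w)=m_G(w,v)$ of edge-counts; everywhere else the proof is a direct transcription of the simple-graph argument with subsets replaced by underlying sets of multisets, using the fact that harmonic multisets inherit from Remark \ref{Remark 3 Elements Multigraphs} the property that $av(\mathcal{B}_v)\in A_{\mathcal{B}_v}$ automatically.
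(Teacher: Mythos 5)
Your verification is correct and is exactly the routine argument the paper has in mind: the authors leave both the simple-graph Lemma \ref{Lemma:  4 Properties} and its multigraph analogue as "easy to see," and your write-up supplies that check, with the two genuinely multigraph-specific points (the looplessness forcing $k=1$ when $\mathcal{B}=\mathcal{C}$ in \textit{(MP4)}, and the symmetry $m_G(v,w)=m_G(w,v)$ translating multiplicities) handled properly. No gaps.
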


We let $\mathcal{MH}_I$ stand for the class of collections of non-trivial harmonic multisets of $\mathbb{Z}$ with $\bigcup_{\mathcal{D}\in\mathcal{MA}} A_{\mathcal{D}}=I$ satisfying \textit{(MP1)} through \textit{(MP5)} of Lemma \ref{Lemma:  4 Properties Multigraphs}. With the analogous constructions as in the simple case it can be shown that there is a bijection $\mathcal{MG}_I\equiv\mathcal{MH}_I$.
%
%\obs $\mathcal{M}(x)=(0)$ if and only if $x \notin \mathcal{M}.$
%
%
%\begin{defi} The root set of multiset $\mathcal{M}$ is defined:  $\mathcal{M^*}=\{ x/ x\in \mathcal{M} \text{ and } \mathcal{M}(x)>0  \}$\end{defi}
%\color{red}{sirve para multigrafo aplanado y conexidad}\\ \color{black}
Namely, for $\mathcal{MA}=\{\mathcal{B}_i\}_{i\in J}\in \mathcal{MH}_I$ define the associated multigraph $G_{\mathcal{MA}}$ as:
\begin{itemize}
\item $V_{\mathcal{MA}}=I$
\item $i\stackrel{k}{\sim}j\in G_{\mathcal{MA}} \Leftrightarrow  (\exists\, t / i=av(\mathcal{B}_t)\text{ and }j^k\in \mathcal{B}_t)\text{ or }(\exists\, t / j=av(\mathcal{B}_t)\text{ and }i^k\in \mathcal{B}_t) $         
\end{itemize}
Note that, by \textit{(MP4)}, this multigraph is well-defined. Finally, we define a vertex labeling $\ell_{\mathcal{MA}}$ over $G_{\mathcal{MA}}$ by $\ell_{\mathcal{MA}}(i)=i$.

%Firstly, we show that $(G_{\mathcal{MA}},\ell_{\mathcal{MA}})\in\mathcal{MG}_n$. Let $S_{\mathcal{MA}}$ denote the set of leaves of $G_{\mathcal{MA}}$.

%Similar arguments as in the proofs of Lemma \ref{Lemma Core} and Corollary \ref{Coro labeling harmonic} go through to prove the following analogous results for multigraphs. Since the notations and the technicalities are rather involved we include the proofs to help the reader follow the line of reasoning.

Identical arguments as in the proofs of Lemmas \ref{Lemma Conected} and \ref{Lemma Core}, Corollary \ref{Coro labeling harmonic} and Theorem \ref{Theorem:MAIN} go through to prove the following analogous results for multigraphs.

%\begin{prop} Lemma \ref{Lemma Core} and Corollary \ref{Coro labeling harmonic} prove that $(G_{\mathcal{MA}},\ell_{\mathcal{MA}})\in\mathcal{MG}_n$.\end{prop}

%\begin{lema} With the notations as above, $G_{\mathcal{MA}}$ is connected.\end{lema}

\begin{lema} With the notations as above,
\begin{enumerate}
\item $G_{\mathcal{MA}}$ is connected.
\item $i\in V_{G_{\mathcal{MA}}}\setminus S_{G_{\mathcal{MA}}}$ if and only if $\exists\,t\in J$ such that $i=av(\mathcal{B}_t)$. Furthermore, this $t$ is unique and $\mathcal{N}_{G_{\mathcal{MA}}}(i)=\mathcal{B}_t$. In particular, $j\in A_{\mathcal{B}_t}$ if and only if $j=i$ or $j\sim i$ in $G_{\mathcal{MA}}$.
\item $\ell_{\mathcal{MA}}$ is a weak harmonic labeling over $G_{\mathcal{MA}}$.
\end{enumerate}\end{lema}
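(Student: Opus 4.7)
The plan is to transcribe, with the natural multiplicity adjustments, the proofs of Lemma \ref{Lemma Conected}, Lemma \ref{Lemma Core} and Corollary \ref{Coro labeling harmonic} into the multigraph setting. The underlying skeleton is the same; what requires care is tracking how multiplicities interact with the averages and with adjacency in $G_{\mathcal{MA}}$, which has been defined precisely so that an equality $j^k\in\mathcal{B}_t$ is reflected by $i\stackrel{k}{\sim} j$ whenever $i=av(\mathcal{B}_t)$. The main obstacle I anticipate is part (2), since the identity $\mathcal{N}_{G_{\mathcal{MA}}}(i)=\mathcal{B}_t$ has to be checked as multisets (multiplicities included), and this is exactly where property \textit{(MP4)} is indispensable.

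For (1) I would copy the argument of Lemma \ref{Lemma Conected} verbatim: given $p,q\in I$, use \textit{(MP1)} to pick $\mathcal{B}_p',\mathcal{B}_q'\in\mathcal{MA}$ with $p\in A_{\mathcal{B}_p'}$ and $q\in A_{\mathcal{B}_q'}$, noting that in $G_{\mathcal{MA}}$ either $p=av(\mathcal{B}_p')$ or $p\sim av(\mathcal{B}_p')$ (analogously for $q$), and then apply \textit{(MP5)} to produce a chain $\mathcal{B}_p'=\mathcal{B}_{i_1},\dots,\mathcal{B}_{i_r}=\mathcal{B}_q'$ with $av(\mathcal{B}_{i_j})\in \mathcal{B}_{i_{j+1}}$; these consecutive averages are pairwise adjacent in $G_{\mathcal{MA}}$, yielding a walk joining $p$ to $q$.

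For (2) I would mimic Lemma \ref{Lemma Core}. If $i$ is not a leaf then it has at least two distinct neighbours $j_1\neq j_2$ in $G_{\mathcal{MA}}$; assuming $i\neq av(\mathcal{B}_t)$ for every $t$ would force $j_1=av(\mathcal{B}_{t_1})$, $j_2=av(\mathcal{B}_{t_2})$ with $i\in A_{\mathcal{B}_{t_1}}\cap A_{\mathcal{B}_{t_2}}$, contradicting \textit{(MP3)}. Conversely, if $i=av(\mathcal{B}_t)$ then $A_{\mathcal{B}_t}$ has at least three elements by Remark \ref{Remark 3 Elements Multigraphs}, so $i$ has two distinct neighbours and is not a leaf. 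Uniqueness of $t$ is \textit{(MP2)}. For the neighborhood identity, any $j$ with $i\stackrel{k}{\sim}j$ arises either from $j^k\in\mathcal{B}_t$ directly, or from $j=av(\mathcal{B}_s)$ together with $i^k\in\mathcal{B}_s$; in the latter case \textit{(MP4)} gives $j^k\in\mathcal{B}_t$. The reverse inclusion is immediate from the definition of adjacency. Part (3) then follows by a one-line computation using (2): if $t$ is the unique index with $i=av(\mathcal{B}_t)$, then $|\mathcal{B}_t|=\deg(i)+1$ and
$$\ell_{\mathcal{MA}}(i)=i=av(\mathcal{B}_t)=\frac{1}{|\mathcal{B}_t|}\sum_{k\in A_{\mathcal{B}_t}}m_{\mathcal{B}_t}(k)\,k=\frac{1}{\deg(i)+1}\Bigl(\ell_{\mathcal{MA}}(i)+\sum_{w\sim i}m_{G_{\mathcal{MA}}}(i,w)\,\ell_{\mathcal{MA}}(w)\Bigr),$$
which, after solving for $\ell_{\mathcal{MA}}(i)$, is exactly the harmonicity equation in Definition \ref{Def:HarmonicMulti}.
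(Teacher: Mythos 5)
Your proposal is correct and takes essentially the same route as the paper, which for this lemma simply asserts that the arguments of Lemmas \ref{Lemma Conected} and \ref{Lemma Core} and Corollary \ref{Coro labeling harmonic} carry over verbatim; you have supplied exactly that transcription, with the right use of \textit{(MP1)}--\textit{(MP5)} (in particular \textit{(MP4)} to match multiplicities in $\mathcal{N}_{G_{\mathcal{MA}}}(i)=\mathcal{B}_t$) and the correct bookkeeping $|\mathcal{B}_t|=\deg(i)+1$ in the harmonicity computation.
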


\begin{teo}\label{Theorem:MAIN Multigraphs} The maps $(G,\ell)\rightarrow \mathcal{MA}_{(G,\ell)}$ and $\mathcal{MA}\rightarrow (G_{\mathcal{MA}},\ell_{\mathcal{MA}})$ are mutually inverse.\end{teo}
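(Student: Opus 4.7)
The plan is to mimic the proof of Theorem \ref{Theorem:MAIN}, modifying the arguments so they keep track of edge multiplicities rather than only adjacency. First I would define, as in the simple case, the map $f:(G,\ell)\rightarrow (G_{\mathcal{MA}_{(G,\ell)}},\ell_{\mathcal{MA}_{(G,\ell)}})$ by $f(v)=\ell(v)$. Since $\ell$ is a bijection $V_G\rightarrow I$ and $V_{G_{\mathcal{MA}_{(G,\ell)}}}=I$, $f$ is a bijection on vertices, and the identity $\ell_{\mathcal{MA}_{(G,\ell)}}(f(v))=f(v)=\ell(v)$ is immediate.

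The heart of the proof is showing that $f$ is a multigraph isomorphism, i.e.\ that $m_G(v,w)=m_{G_{\mathcal{MA}_{(G,\ell)}}}(\ell(v),\ell(w))$ for every pair $v\neq w$. Assume $v\sim w$ in $G$ with $m_G(v,w)=k$. As in the simple case, the analog of Remark \ref{Obs:dosverticesunonohoja} ensures at least one of $v,w$ lies in $V_G\setminus S_G$; say $v$ does. Then $\mathcal{B}_v\in \mathcal{MA}_{(G,\ell)}$ satisfies $av(\mathcal{B}_v)=\ell(v)$ and $\ell(w)^k\in \mathcal{B}_v$, so by the definition of adjacency in $G_{\mathcal{MA}_{(G,\ell)}}$ we get $\ell(v)\stackrel{k}{\sim}\ell(w)$. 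Conversely, if $\ell(v)\stackrel{k}{\sim}\ell(w)$, then one of the two disjunctive clauses in the definition of $G_{\mathcal{MA}}$ holds, say $\ell(v)=av(\mathcal{B}_u)$ with $\ell(w)^k\in \mathcal{B}_u$ for some $u\in V_G\setminus S_G$; bijectivity of $\ell$ forces $u=v$, and the multiplicity of $\ell(w)$ in $\mathcal{B}_v$ is by definition $m_G(v,w)$, giving $m_G(v,w)=k$. Here property \textit{(MP4)} guarantees consistency when both clauses apply simultaneously, so the multiplicity is well-defined.

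For the other composition, I define $g:\mathcal{MA}\rightarrow \mathcal{MA}_{(G_{\mathcal{MA}},\ell_{\mathcal{MA}})}$ by $g(\mathcal{B}_t)=\widetilde{\mathcal{B}}_i$ where $i=av(\mathcal{B}_t)$ and $\widetilde{\mathcal{B}}_i$ is the harmonic multiset built from the multigraph $G_{\mathcal{MA}}$ at vertex $i$. By the multigraph version of Lemma \ref{Lemma Core} stated above, every non-leaf vertex of $G_{\mathcal{MA}}$ has this form with unique $t$ and $\mathcal{N}_{G_{\mathcal{MA}}}(i)=\mathcal{B}_t$; \textit{(MP2)} promotes this to a bijection. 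Since $\ell_{\mathcal{MA}}$ is the identity on $I$,
\[
\widetilde{\mathcal{B}}_i=\{\ell_{\mathcal{MA}}(i)\}\cup\{\ell_{\mathcal{MA}}(j)^{m_{G_{\mathcal{MA}}}(i,j)}\,:\,j\sim i\}=\mathcal{N}_{G_{\mathcal{MA}}}(i)=\mathcal{B}_t,
\]
so $g$ is the identity on $\mathcal{MA}$, completing the verification that the two assignments are mutually inverse.

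The main obstacle I anticipate is the correct tracking of multiplicities in the passage between $G$ and $G_{\mathcal{MA}}$, which in the simple case was a trivial reading of (P4) as a compatibility condition but now becomes the genuine multigraphic statement (MP4): both clauses defining $i\stackrel{k}{\sim}j$ can simultaneously hold, and one must check they assign the same exponent $k$. Once this is settled, the remainder of the argument is a routine transcription of the simple-graph proof, with closed neighborhoods replaced by closed multi-neighborhoods and cardinalities by the weighted sums appearing in Definition \ref{Def:HarmonicMulti}.
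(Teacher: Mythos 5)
Your proposal is correct and follows exactly the route the paper intends: the paper itself dispenses with a written proof by asserting that the arguments of Lemmas \ref{Lemma Conected} and \ref{Lemma Core}, Corollary \ref{Coro labeling harmonic} and Theorem \ref{Theorem:MAIN} carry over verbatim, and your write-up is precisely that transcription, with the one genuinely new point (that both clauses defining $i\stackrel{k}{\sim}j$ must assign the same exponent, which is what \textit{(MP4)} guarantees) correctly identified and handled.
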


\begin{obs}\label{generalizacion a loops} All the results of this section can be extended in a straighforward manner to multigraphs with loops.  This is consequence of the fact that a multiset $$\{ x_1^{m_1},x_2^{m_{2}},\dots, x_k,\ldots, x_n^{m_{n}}\}$$ is a harmonic with average $x_k$ if and only if $\{ x_1^{m_1},x_2^{m_{2}},\dots, x_k^{m},\ldots, x_n^{m_{n}}\}$ is harmonic with average $x_k$ for all $k>0$.\end{obs}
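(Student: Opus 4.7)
The plan is to first verify the underlying computational claim about multisets and then show that it makes every definition, lemma, and theorem of this section extend verbatim once loops are properly bookkept.

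For the computational claim, I fix a non-trivial multiset $\mathcal{M}=\{x_1^{m_1},\ldots,x_k,\ldots,x_n^{m_n}\}$ and set $S=\sum_{i\neq k}m_ix_i$ and $N=\sum_{i\neq k}m_i$. Then $av(\mathcal{M})=x_k$ if and only if $S+x_k=(N+1)x_k$, that is, $S=Nx_k$. For any integer $m>0$, the multiset $\mathcal{M}'=\{x_1^{m_1},\ldots,x_k^m,\ldots,x_n^{m_n}\}$ satisfies $av(\mathcal{M}')=x_k$ if and only if $S+mx_k=(N+m)x_k$, which is again $S=Nx_k$. Hence both conditions coincide, independently of $m$.

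Next, I would extend the notions of Section 5 by declaring that in a multigraph $G$ with loops, the closed multi-neighborhood of $v$ is $\mathcal{N}_G(v)=\{v^{1+m_G(v,v)}\}\cup\{w^{m_G(v,w)}\,:\,v\sim w,\,w\neq v\}$ and the degree is $\deg(v)=\sum_{w\neq v}m_G(v,w)+m_G(v,v)$. The harmonicity condition of Definition \ref{Def:HarmonicMulti} then rewrites as $\ell(v)=av(\mathcal{N}_G(v))$. By the computational claim applied to $x_k=\ell(v)$, this condition is equivalent to $\ell(v)=av(\mathcal{N}_{G'}(v))$, where $G'$ is the loopless multigraph obtained from $G$ by erasing every loop. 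Thus, the class of weakly labeled multigraphs with loops stands in canonical bijection with pairs consisting of a loopless weakly labeled multigraph together with an arbitrary assignment of a nonnegative integer $m_G(v,v)$ to each non-leaf $v$.

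Finally, to upgrade Theorem \ref{Theorem:MAIN Multigraphs} I would redefine $\mathcal{B}_v$ so that $\ell(v)$ appears with multiplicity $1+m_G(v,v)$, and extend the associated multigraph construction by setting $m_{G_{\mathcal{MA}}}(i,i)$ equal to the multiplicity of $i$ in the unique $\mathcal{B}_t$ with $av(\mathcal{B}_t)=i$, minus one. The computational claim guarantees that every such $\mathcal{B}_v$ remains harmonic with average $\ell(v)$, and properties (MP1)--(MP5) are insensitive to the multiplicity of the average within each multiset, so Lemma \ref{Lemma:  4 Properties Multigraphs} goes through unchanged. The analogues of Lemmas \ref{Lemma Conected} and \ref{Lemma Core}, Corollary \ref{Coro labeling harmonic} and Theorem \ref{Theorem:MAIN Multigraphs} can then be copied verbatim. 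The step I expect to require the most attention is pinning down a consistent convention for how loops contribute to the multi-neighborhood and to the degree so that the identity $\ell(v)=av(\mathcal{N}_G(v))$ exactly reproduces Definition \ref{Def:HarmonicMulti}; once that convention is fixed, the computational claim at the start does all the real work.
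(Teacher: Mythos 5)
Your proposal is correct and follows exactly the justification the paper intends: the identity $S=Nx_k$ shows the average condition is insensitive to the multiplicity of $x_k$ itself, which is precisely the paper's stated reason (note the paper's ``for all $k>0$'' is a typo for ``for all $m>0$'', which you read correctly). The additional bookkeeping you supply for loops in the multi-neighborhood and in the construction of $G_{\mathcal{MA}}$ is consistent with the paper's conventions and fills in the ``straightforward manner'' the remark leaves implicit.
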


%A similar definition as in the simple case allows to characterize  connected multigraphs.
%
%\begin{defi} \textbf{DEFINICION DE CONEXO PARA MULTIGRAFOS} A collection $\mathcal{MA}=\{A_1,\ldots, A_k\}$ of harmonic multisubsets  over $[0,n]$ is said to be \textit{connected} if for every $p,q\in \bigcup\limits_{1\leq i\leq k}A_i^*$ there exists a sequence $i_1,\ldots,i_r$ such that \begin{itemize}\item $p\in A_{i_1}^*$ and $q\in A_{i_r}^*$\item $av(A_{i_j})^{x_j}\in A_{i_{j+1}}$ for $1\leq j\leq r-1$.\end{itemize}\end{defi}
%
%The analogue to Lemma \ref{Lemma:Connected} follows the same lines as its proof.

\subsection*{Total weak harmonic labelings} Since a weak harmonic labeling over a multigraph $G$ is trivially equivalent to a total labeling over $sG$ we can state the theory in terms of total labelings.

\begin{defi} If $G$ is a simple graph, then we call a \textit{total weak harmonic labeling} of $G$ onto $I$ to a function $\ell: V_G\cup E_G\rightarrow \mathbb{Z}$ such that $\ell |_{V_G}$ is a bijection with $I$ and
$$\ell(v)=\frac{1}{deg(v)}\sum_{w\sim v}\ell(\{v,w\})\ell(w)\hspace{0.2in}\forall v \in V_G\setminus S_G.$$\end{defi}

Note that total weak harmonic labelings have no restriction on the edges. Now, given a weak harmonic labeling $\ell:V\rightarrow I$ over a multigraph $G$ we have the associated total weak harmonic labeling $\ell^*: V_{sG}\cup E_{sG}\rightarrow \mathbb{Z}$ over $sG$ defined as $$\begin{cases}\ell^*(v)=\ell(v)& v\in V_{sG}\\ \ell^*(\{v,w\})=m_G(v,w)&\{u,v\}\in E_{sG}.\end{cases}$$ Conversely, given a total weak harmonic labeling $\ell: V_G\cup E_G\rightarrow \mathbb{Z}$ over a simple graph $G$ then we can define a weak harmonic labeling over the multigraph $G_{\ell}$ where $V_{G_{\ell}}=V_G$ and $m_{G_{\ell}}(v,w)=\ell(\{v,w\})$. View in this fashion, weak harmonic labelings of simple graphs are a particular case of total weak harmonic labelings of simple graphs.

%\begin{obs} If $\ell$ is a total weak harmonic labeling and $d$ is the greater common divisor of the labels of the edges then the new labelling "cambiar los label de los edges por los labels originales divididos $d$" is also a weak harmonic labeling. We shall require this condition for labelings.\end{obs}

Total weak harmonicity  is naturally much less restrictive than weak harmonicity. Any finite simple graph $G$ which admits a weak harmonic labeling in particular admits a bijective vertex-labeling $\ell:V_G\rightarrow [0,n-1]$ such that \begin{equation}\label{eq:final}\min_{w\in N_v(G)}\{\ell(w)\}<\ell(v)<\max_{u\in N_v(G)}\{\ell(u)\}\end{equation} for every $v\in V_G\setminus S_G$. Algorithm \ref{algovalues} produces a total weak harmonic labeling from any labeling $\phi$ fulfilling \eqref{eq:final} on a finite simple graph $G$. It makes use of the following

\begin{obs}\label{Obs:MultiHarmonize} If $\mathcal{M}=(A,m)$ is a finite multiset and $x\in\mathcal{M}$ is neither the maximum or minimum of $\mathcal{M}$ then we can correct the multiplicities of the elements of $\mathcal{M}$ so $av(\mathcal{M})=x$. Indeed, if $x>av(\mathcal{M})$ then letting $s=\min_{y\in \mathcal{M}}\{y\}$ and $$m'(y)=\begin{cases}m(y)\cdot m(s)\cdot (x-s)& y\neq s, x \\m(s)\cdot|\sum\limits_{z\neq s}(x-z)m(z)|&y=s\end{cases}$$ we readily see that $\mathcal{M}'=(A,m')$ is an harmonic multiset of $\mathbb{Z}$. The case $x<av(\mathcal{M})$ is analogous.

Additionally, note that multiplying the multiplicities of every element in an harmonic multiset of $\mathbb{Z}$ by a fixed positive integer does not alter its harmonicity.\end{obs}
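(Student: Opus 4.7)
The plan is to separate the remark into its two assertions---the construction of $\mathcal{M}'$, and the scaling invariance---and treat them independently. For the construction, I would recast the condition $av(\mathcal{M}')=x$ as the single linear equation $\sum_{y\in A}m'(y)(y-x)=0$ in positive integer unknowns $m'(y)$. Since $x$ is strictly between the minimum $s=\min A$ and the maximum $t=\max A$, both $s$ and $t$ lie in $A$, so the coefficients $(y-x)$ attain integer values of both signs as $y$ ranges over $A$. Thus the equation admits solutions with $m'(y)\in \mathbb{N}$, and the only real task is to exhibit one.

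Assuming without loss of generality $x>av(\mathcal{M})$ (the other case being symmetric, exchanging the roles of $s$ and $t$), set $\sigma:=\sum_y m(y)(y-x)$, which is a negative integer because $\sigma=|\mathcal{M}|\cdot(av(\mathcal{M})-x)$. My candidate is $m'(y)=(t-x)\,m(y)$ for $y\neq t$ and $m'(t)=(t-x)\,m(t)+|\sigma|$. A one-line substitution gives $\sum_y m'(y)(y-x)=(t-x)\sigma+|\sigma|(t-x)=(t-x)(\sigma+|\sigma|)=0$, so $av(\mathcal{M}')=x$; and the preliminary scaling by the positive integer $(t-x)$ makes every $m'(y)$ a positive integer. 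The explicit formula suggested in the remark follows the same philosophy but adjusts the multiplicity of $s$ rather than of $t$; either choice works by symmetry.

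The second assertion---that multiplying every multiplicity by a fixed positive integer $\lambda$ preserves harmonicity---is immediate from the definition, since $av(\lambda\mathcal{M})=(\lambda\sum_y m(y)y)/(\lambda\sum_y m(y))=av(\mathcal{M})$, so $av$ is unchanged and in particular remains in $A$. The only subtle point in the whole argument is ensuring positive integrality of the adjusted multiplicities, and this is precisely what the preliminary scaling by $(t-x)$ accomplishes: without that step, the correction $-\sigma/(t-x)$ needed to annihilate $\sigma$ would in general be a non-integer rational. Everything else is routine arithmetic.
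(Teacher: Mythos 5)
Your construction is correct and the verification is complete: $\sum_{y}m'(y)(y-x)=(t-x)\sigma+|\sigma|(t-x)=(t-x)(\sigma+|\sigma|)=0$ because $\sigma<0$, the factor $t-x$ is a positive integer because $x$ is not the maximum, and every $m'(y)$ is a positive integer, so $\mathcal{M}'=(A,m')$ is harmonic with $av(\mathcal{M}')=x$; the scaling argument for the second assertion is likewise fine. Your route differs from the paper's in the one detail that actually matters: when $x>av(\mathcal{M})$ you absorb the correction into the multiplicity of the \emph{maximum} $t$, whereas the paper rescales everything except the minimum $s$ and recomputes the multiplicity of $s$. Your choice is the robust one. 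Adding copies of $t$ can push the average arbitrarily close to $t$, hence up to any $x<t$; shrinking the relative weight of $s$ can only push the average up to $av$ of $\mathcal{M}$ restricted to $A\setminus\{s\}$, which need not reach $x$. A computation shows the paper's displayed $m'$ yields $av(\mathcal{M}')=x$ precisely when $\sum_{z\neq s}(z-x)m(z)\geq 0$, and this can fail even with all multiplicities equal to $1$: for $A=\{0,1,100,101,102,103,104,105\}$ and $x=100$ one has $x>av(\mathcal{M})=77$ but $\sum_{z\neq 0}(z-100)=-84$, and the paper's formula does not harmonize. So your parenthetical remark that ``either choice works by symmetry'' is not accurate: the true symmetry is between augmenting $\max A$ when $x>av(\mathcal{M})$ and augmenting $\min A$ when $x<av(\mathcal{M})$, which is exactly what your construction implements. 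This does not affect the validity of your proof; it means your argument repairs, rather than reproduces, the formula given in the remark.
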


\begin{algorithm}
\caption{\emph{Total weak harmonic labeling}}\label{algovalues}
\begin{algorithmic}[1]
\Require{$\phi:V_G\rightarrow [0,n-1]$ with property \eqref{eq:final}}
\Ensure{$\ell$ a total harmonic labeling on $G$}
\Procedure{totalLabelingFrom}{$\phi$}
\State Order $V_G\setminus S_G=\{v_1,\ldots,v_t\}$ such that $\phi(v_i)<\phi(v_j)$ if $i<j$.
%\State $m_i(j)\gets\begin{cases}1&j\in\{\phi(w)\,|\, w\in N_G(v_i)\} \\0&\text{otherwise}\end{cases}$ ($1\leq i\leq t$, $0\leq j\leq n-1$)
\State $\mathcal{B}_{i} \gets \{\phi(w)\,|\, w\in N_G(v_i)\}$ ($1\leq i\leq t$).

\For{$1\leq i\leq t$}

\If{$av(\mathcal{B}_i)\neq\phi(v_i)$}

\State \multiline{%
Harmonize $\mathcal{B}_i$ by conveniently altering the multiplicity of the elements different from $\phi(v_i)$ (see Remark \ref{Obs:MultiHarmonize}).}
\If{$\phi(v_j)\in \mathcal{B}_i$}
\State \multiline{%
\textbf{For $1\leq j<i$:} Correct the multiplicities of the elements of $\mathcal{B}_j$ so the multiplicity of $\phi(v_i)\in\mathcal{B}_j$ coincides with that of $\phi(v_j)\in\mathcal{B}_i$}
\State  \multiline{%
\textbf{For $i< j\leq t$:} Correct the multiplicity of $\phi(v_i)\in\mathcal{B}_j$ so it coincides with that of $\phi(v_j)\in\mathcal{B}_i$}
%\multiline{%
%Correct the multiplicities of the elements of $\mathcal{B}_j$ so the multiplicity of $\phi(v_i)\in\mathcal{B}_j$ coincides with that of $\phi(v_j)\in\mathcal{B}_i$ ($1\leq j<i$).}
%\State \multiline{%
%Correct the multiplicity of $\phi(v_i)\in\mathcal{B}_j$ so it coincides with that of $\phi(v_j)\in\mathcal{B}_i$ ($i<j\leq t$).}
\EndIf
\EndIf
%\If{$av(\mathcal{B}_i)>\phi(v_i)$}
%\State $s\gets\displaystyle\min_{m_i(j)\neq 0}\{j\}$
%
%\State $m_i(j)\gets\begin{cases}m_i(j)\cdot m_i(s)\cdot|s-\phi(v_i)|&j\neq s, \phi(v_i) \\m_i(s)\cdot|\sum\limits_{k\neq s}(\phi(v_i)-k)m_i(k)|&j=s\end{cases}$
%
%\For{$1\leq r<i$}
%\If{$m_i(\phi(v_r))\neq 0$}
%\State $m_r(j)\gets m_r(j)\cdot m_i(\phi(v_r))\text{ ($0\leq j\leq n-1$, $j\neq \phi(v_i)$)}
%$
%\EndIf
%\EndFor
%\For{$i< r\leq t$}
%\If{$m_i(\phi(v_r))\neq 0$}
%\State $m_r(\phi(v_i))\gets m_r(\phi(v_i))\cdot m_i(\phi(v_r))$
%\EndIf
%\For{$1\leq j< i$}
%\If{$m_r(\phi(v_j))\neq 0$}
%\State $m_r(\phi(v_j))\gets m_r(\phi(v_j))\cdot m_{\phi(v_j)}(r)$
%\EndIf
%\EndFor
%\EndFor
%
%\EndIf
%
%\If{$av(\mathcal{B}_i)<\phi(v_i)$}
%\State $s\gets\displaystyle\max_{m_i(j)\neq 0}\{j\}$
%\State $m_i(j)\gets\begin{cases}m_i(j)\cdot m_i(s)\cdot|s-\phi(v_i)|&j\neq s, \phi(v_i) \\m_i(s)\cdot|\sum\limits_{k\neq s}(\phi(v_i)-k)m_i(k)|&j=s\end{cases}$
%\For{$i< r\leq t$}
%\If{$m_i(\phi(v_r))\neq 0$}
%\State $m_r(\phi(v_i))\gets m_r(\phi(v_i))\cdot m_i(\phi(v_r))$
%\EndIf
%\For{$1\leq j< i$}
%\If{$m_r(\phi(v_j))\neq 0$}
%\State $m_r(\phi(v_j))\gets m_r(\phi(v_j))\cdot m_{\phi(v_j)}(r)$
%\EndIf
%\EndFor
%\EndFor
%\EndIf
%
%\State $\mathcal{B}_{k} \gets \{j^{m_k(j)}\,|\, 0\leq j\leq n-1)\}$ ($1\leq k\leq t$).
\EndFor
\State $\ell(v)\gets\phi(v)$ for every $v\in V_G$
\State $\ell(\{w,u\})\gets$ multiplicity of $\phi(u)$ in $\mathcal{B}_{\phi(w)}$
\EndProcedure
\end{algorithmic}
\end{algorithm}

Figure \ref{Figure:MultilabelingTWO} shows examples of total weak harmonic labelings obtained from Algorithm \ref{algovalues} to some complete graphs with two leaves added.

\begin{figure}
\begin{minipage}{0.33\textwidth}
\tiny

	\begin{tikzpicture}[scale=0.6]
	
	\node [draw,circle](0)at(1,-2){$0$};
	\node [draw,circle](1) at(2,0){$1$};
	\node [draw,circle](2)at(3,2){$2$};	
	\node [draw,circle](3)at(4,0){$3$};
	\node [draw,circle](4)at(5,-2){$4$};
	\node [above] at (1,-1.35) {$(3)$};
	\node [above] at (5,-1.35) {$(3)$};

	%\draw (1)to[bend right] (5);
	\draw (0) to (1);
	\draw (1) to (2);
	\draw (1) to (3);
	\draw (2) to (3);   	
	\draw (3) to (4);

	\end{tikzpicture}
%	\begin{tikzpicture}[scale=0.6]
%	
%	\node [draw,circle](0)at(1,-2){$0$};
%	\node [draw,circle](1) at(2,0){$1$};
%	\node [draw,circle](2)at(3,2){$2$};	
%	\node [draw,circle](3)at(4,0){$3$};
%	\node [draw,circle](4)at(5,-2){$4$};
%	\node [above] at (1,-1.35) {\footnotesize $(3)$};
%	\node [above] at (5,-1.35) {\footnotesize $(3)$};
%	
%	
%	
%	%\draw (1)to[bend right] (5);
%	\draw (0) to (1);
%	\draw (1) to (2);
%	\draw (1) to (3);
%	\draw (2) to (3);   	
%	\draw (3) to (4);
%	
%	
%	
%	
%	\end{tikzpicture}
\end{minipage}	\begin{minipage}{0.33\textwidth}
%\centering
\tiny
	\begin{tikzpicture}[scale=0.6]
	
\draw[->, dashed]  (3.3,1.3) to [bend right] (2.75,2.7);	
	
	\node [draw,circle](0)at(1,-2){$0$};
	\node [draw,circle](1) at(2,0){$1$};
	\node [draw,circle](2)at(2,2){$2$};	
	\node [draw,circle](3)at(4,2){$3$};
	\node [draw,circle](4)at(4,0){$4$};
	\node [draw,circle](5)at(5,-2){$5$};
	
	\node [below] at (0.8,-0.5) {$(18)$};
	\node [below] at (5.2,-0.5) {$(18)$};
	\node [left]  at (2,1) { $(3)$};
	\node [right] at (4,1) { $(7)$};
	\node [below] at (3,0) {$(3)$};
	\node [anchor=south east] at (2.8,2.5){$(3)$};
	%\draw (1)to[bend right] (5);
	\draw (0) to (1);
	\draw (1) to (2);
	\draw (1) to (3);
	\draw (1) to (4);
	\draw (2) to (3);   	
	\draw (2) to (4);
	\draw (3) to (4);
	\draw (4) to (5);

	\end{tikzpicture}
%	\begin{tikzpicture}[scale=0.6]
%	
%	\node [draw,circle](0)at(1,-2){$0$};
%	\node [draw,circle](1) at(2,0){$1$};
%	\node [draw,circle](2)at(2,2){$2$};	
%	\node [draw,circle](3)at(4,2){$3$};
%	\node [draw,circle](4)at(4,0){$4$};
%	\node [draw,circle](5)at(5,-2){$5$};
%	
%	\node [below] at (0.8,-0.5) {\footnotesize $(18)$};
%	\node [below] at (5.2,-0.5) {\footnotesize $(18)$};
%	\node [left]  at (2,1) {\footnotesize $(3)$};
%	\node [right] at (4,1) {\footnotesize $(7)$};
%	\node [below] at (3,0) {\footnotesize $(3)$};
%	\node [above] at (3,2){\footnotesize $(3)$};
%	%\draw (1)to[bend right] (5);
%	\draw (0) to (1);
%	\draw (1) to (2);
%	\draw (1) to (3);
%	\draw (1) to (4);
%	\draw (2) to (3);   	
%	\draw (2) to (4);
%	\draw (3) to (4);
%	\draw (4) to (5);
%	
%	
%	\end{tikzpicture}
\end{minipage}\begin{minipage}{0.33\textwidth}
\tiny
\centering
	\begin{tikzpicture}[scale=0.6]
	
	\draw[->, dashed]  (0.52,2.2) to [bend right] (-1,3.2);
	\draw[->, dashed]  (1.4,1.1) to [bend right] (-0.18,3.6);
	\draw[->, dashed]  (1.4,2.4) to [bend left] (2.8,3.6);
	
	\node [draw,circle](0)at(-2.2,-1){$0$};
	\node [draw,circle](1) at(-0.2,0){$1$};
	\node [draw,circle](2)at(-0.7,2){$2$};	
	\node [draw,circle](3)at(1,3.5){$3$};
	\node [draw,circle](4)at(2.7,2){$4$};
	\node [draw,circle](5)at(2.2,0){$5$};
	\node [draw,circle](6)at(4.2,-1){$6$};
	\node [below] at (-1.7,0.3) { $(60)$};
	\node [below] at (3.7,0.3) {$(60)$};
	\node [below] at (1,0) {$(6)$};
	\node [anchor=south east]  at (-0.2,3.4) { $(6)$};
	\node [above]  at (-1.4,2.7) {$(6)$};
	\node [right] at (2.4,0.9) {$(21)$};
	\node [above] at (3.3,3.3){$(6)$};
	\node [left] at (-0.4,0.9) { $(6)$};
	%\draw (1)to[bend right] (5);
	\draw (0) to (1);
	\draw (1) to (2);
	\draw (1) to (3);
	\draw (1) to (4);
	\draw (1) to (5);
	
	\draw (2) to (3);   	
	\draw (2) to (4);
	\draw (2) to (5);
	
	\draw (3) to (4);
	\draw (3) to (5);
	
	\draw (4) to (5);
	
	\draw (5) to (6);

	\end{tikzpicture}
%	\begin{tikzpicture}[scale=0.6]
%	
%	\draw[->, dashed]  (0.52,2.2) to [bend right] (-1,3.2);
%	\draw[->, dashed]  (1.4,2.4) to [bend left] (-0.18,3.6);
%	
%	\node [draw,circle](0)at(-2.2,-1){$0$};
%	\node [draw,circle](1) at(-0.2,0){$1$};
%	\node [draw,circle](2)at(-0.7,2){$2$};	
%	\node [draw,circle](3)at(1,3.5){$3$};
%	\node [draw,circle](4)at(2.7,2){$4$};
%	\node [draw,circle](5)at(2.2,0){$5$};
%	\node [draw,circle](6)at(4.2,-1){$6$};
%	\node [below] at (-1.7,0.3) {\footnotesize $(60)$};
%	\node [below] at (3.7,0.3) {\footnotesize $(60)$};
%	\node [below] at (1,0) {\footnotesize $(6)$};
%	\node [above]  at (-0.2,3.5) {\footnotesize $(6)$};
%	\node [above]  at (-1.4,2.7) {\footnotesize $(6)$};
%	\node [right] at (2.4,0.9) {\footnotesize $(31)$};
%	\node [above] at (2.3,2.65){\footnotesize $(11)$};
%	\node [left] at (-0.4,0.9) {\footnotesize $(6)$};
%	%\draw (1)to[bend right] (5);
%	\draw (0) to (1);
%	\draw (1) to (2);
%	\draw (1) to (3);
%	\draw (1) to (4);
%	\draw (1) to (5);
%	
%	\draw (2) to (3);   	
%	\draw (2) to (4);
%	\draw (2) to (5);
%	
%	\draw (3) to (4);
%	\draw (3) to (5);
%	
%	\draw (4) to (5);
%	
%	\draw (5) to (6);
%	
%	
%	\end{tikzpicture}
\end{minipage}	
\caption{Examples of total weak harmonic labelings obtained from Algorithm \ref{algovalues}. The label of the edges appear in parenthesis (labels equal to $1$ are omitted).}
\label{Figure:MultilabelingTWO}
\end{figure}
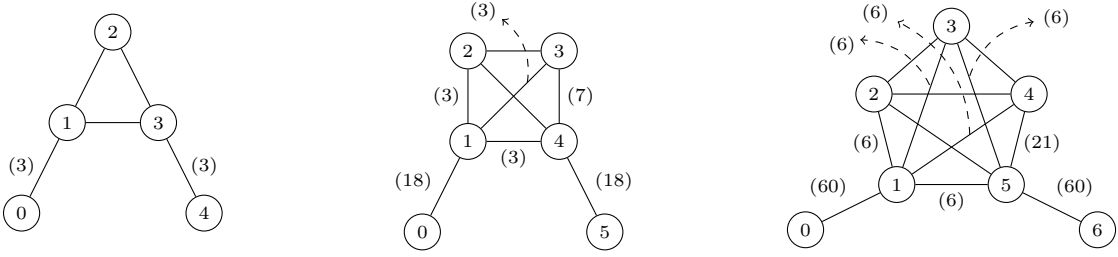

%\section{Open questions}
%
%\begin{itemize}
%
%\item Recall that the \emph{Laplacian} $L_G$ of an $n$-vertex graph $G$ with vertices $\{v_1,\ldots,v_n\}$ is the operator $$(L_G)_{ij}=\begin{cases}\deg(v_i)&i=j\\-1 & v_i\sim v_j\\0 & v_i\nsim v_j\end{cases}\subset \mathbb{Z}^{n\times n}.$$ Thus, a solution to $$L_G X=0$$ It is a standard result that if $G$ is connected then  $rank(L_G)=n-1$ (see e.g. \cite{Bapat}). If we define the \emph{reduced Laplacian} $\tilde{L}_G$ of $G$ as the operator obtained from $L_G$ by removing the rows corresponding to leaves then an ($n+1$)-vertex graph $G$ admits a weak harmonic labeling if and only if there exists a permutation $\sigma\in S_{n+1}$ such that $\sigma(0,\ldots,n)\in ker(\tilde{L}_G)$. It would be interesting to look for harmonic labelings from this viewpoint.
%
%\item Harmonic subsets are rather interesting sets. It would be interesting to study the number of harmonic subsets of size $k$ contained in $[0,n]$.
%\end{itemize}

\newpage

\section*{Appendix}

All possible weakly labeled finite graphs up to ten vertices.
 
\begin{table}[h]

\tiny

\vspace{5mm}

\begin{minipage}{0.49\textwidth}
\begin{center}
	\begin{tikzpicture}[scale=0.8]

	\node [draw,circle](0)at(0,0){$0$};
	\node [draw,circle](1)at(2,0){$1$};
	\node [draw,circle](2)at(4,0){$2$};

	\draw (1)--(0);
	\draw (1)--(2);

	\end{tikzpicture}\\
	\vspace{5mm}
	$ \mathcal{A}=\lbrace 012\rbrace$	
	\end{center}
\end{minipage}
\;
\begin{minipage}{0.49\textwidth}
\begin{center}
	\begin{tikzpicture}[scale=0.8]

	\node [draw,circle](0)at(0,0){$0$};
	\node [draw,circle](1)at(2,0){$1$};
	\node [draw,circle](2)at(4,0){$2$};
	\node [draw,circle](3)at(6,0){$3$};
	
	\draw (1)--(0);
	\draw (1)--(2);
	\draw (3)--(2);
	\end{tikzpicture}\\
	\vspace{5mm}
	$ \mathcal{A}=\lbrace 012,123\rbrace$
	
	\end{center}
\end{minipage}\\

\vspace{5mm}

\begin{minipage}{0.49\textwidth}
\begin{center}
	\begin{tikzpicture}[scale=0.6]

	\node [draw,circle](0)at(0,0){$0$};
	\node [draw,circle](1)at(2,0){$1$};
	\node [draw,circle](2)at(4,0){$2$};
	\node [draw,circle](3)at(6,0){$3$};
	\node [draw,circle](4)at(8,0){$4$};
	\draw (1)--(0);
	\draw (1)--(2);
	\draw (3)--(2);
	\draw (3)--(4);
	\end{tikzpicture}\\
	\vspace{5mm}
	$ \mathcal{A}=\lbrace 012; 123; 234\rbrace$	
\end{center}
\end{minipage}
\;
\begin{minipage}{0.49\textwidth}
\begin{center}
	\begin{tikzpicture}[scale=0.6]

	\node [draw,circle](2)at(0,0){$2$};
	
	\node [draw,circle](1)at(2,0){$1$};
	\node [draw,circle](0)at(-2,0){$0$};
	\node [draw,circle](3)at(0,2){$3$};
	\node [draw,circle](4)at(0,-2){$4$};
	\draw (1)--(2);
	\draw (0)--(2);
	\draw (3)--(2);
	\draw (4)--(2);
	\end{tikzpicture}\\
	\vspace{5mm}
	$ \mathcal{A}=\lbrace 01234\rbrace$	
\end{center}
\end{minipage}

\vspace{5mm}

\begin{center}

	\begin{tikzpicture}[scale=0.6]

	\node [draw,circle](0)at(0,0){$0$};
	\node [draw,circle](1)at(2,0){$1$};
	\node [draw,circle](2)at(4,0){$2$};
	\node [draw,circle](3)at(6,0){$3$};
	\node [draw,circle](4)at(8,0){$4$};
	\node [draw,circle](5)at(10,0){$5$};
	\draw (1)--(0);
	\draw (1)--(2);
	\draw (3)--(2);
	\draw (3)--(4);
	\draw (5)--(4);
	\end{tikzpicture}\\
	\vspace{5mm}
	$ \mathcal{A}=\lbrace 012; 123; 234; 345\rbrace$	
\end{center}

\vspace{5mm}

\begin{minipage}{0.49\textwidth}
\begin{center}
	\begin{tikzpicture}[scale=0.55]

	\node [draw,circle](0)at(0,0){$0$};
	\node [draw,circle](1)at(2,0){$1$};
	\node [draw,circle](2)at(4,0){$2$};
	\node [draw,circle](3)at(6,0){$3$};
	\node [draw,circle](4)at(8,0){$4$};
	\node [draw,circle](5)at(10,0){$5$};
	\node [draw,circle](6)at(12,0){$6$};
	
	\draw (1)--(0);
	\draw (1)--(2);
	\draw  (2) to (3);
	\draw  (3) to (4);
	\draw  (4) to (5);
	\draw  (5) to (6);
	
	\end{tikzpicture}\\
	\vspace{5mm}
	$ \mathcal{A}=\lbrace 012; 123; 234;345, 456\rbrace$
	\end{center}
\end{minipage}
\;
\begin{minipage}{0.49\textwidth}
\begin{center}
	\begin{tikzpicture}[scale=0.6]
	
	\node [draw,circle](1)at(0,0){$1$};
	\node [draw,circle](2) at(2,0){$2$};
	\node [draw,circle](3)at(4,0){$3$};
	\node [draw,circle](4)at(6,0){$4$};
	\node [draw,circle](5)at(8,0){$5$};
	\node [draw,circle](0)at(4,2){$0$};
	\node [draw,circle](6)at(4,-2){$6$};

	%\node [red] at(6,5) {core};
	%\node [red] at(7.9,0) {leaf};
	%\node [red] at(0.8,2.3) {stem};
	%
	%\draw [dashed, red] (3,3) circle (3);
	
	%\draw (1)--(2);
	
	%\draw (1)to[bend right] (5);
	\draw (1) to (2);
	\draw (2)--(3);
	\draw (3)--(4);
	\draw (4)-- (5);
	\draw (0)-- (3);
	\draw (3)--(6);

	\end{tikzpicture}\\
	\vspace{5mm}
$ \mathcal{A}=\lbrace 123;02346;345\rbrace$		
\end{center}
\end{minipage}\\
\begin{minipage}{0.49\textwidth}
\begin{center}
	\begin{tikzpicture}[scale=0.6]

	\node [draw,circle](3)at(0,0){$3$};
	
	\node [draw,circle](0)at(-3.8,0){$0$};
	\node [draw,circle](5)at(3.8,0){$5$};
	\node [draw,circle](6)at(2,-3){$6$};
	\node [draw,circle](1)at(-2,-3){$1$};
	\node [draw,circle](2)at(-2,3){$2$};
	\node [draw,circle](4)at(2,3){$4$};

	%\node [red] at(6,5) {core};
	%\node [red] at(7.9,0) {leaf};
	%\node [red] at(0.8,2.3) {stem};
	%
	%\draw [dashed, red] (3,3) circle (3);
	
	%\draw (1)--(2);
	\draw  (3) to (0);
	\draw  (3) to (1);
	\draw  (3) to (2);
	\draw  (3) to (4);
	\draw  (3) to (5);
	\draw  (3) to (6);
	\end{tikzpicture}\\
	\vspace{5mm}
	$ \mathcal{A}=\lbrace 0123456\rbrace$	
\end{center}
\end{minipage}
\;
\begin{minipage}{0.49\textwidth}
\begin{center}
 
	\begin{tikzpicture}[scale=0.6]
	
	\node [draw,circle](0)at(-2,-2){$0$};
	\node [draw,circle](1) at(-2,2){$1$};
	\node [draw,circle](2)at(0,0){$2$};
	\node [draw,circle](3)at(2,-2){$3$};
	\node [draw,circle](4)at(4,0){$4$};
	\node [draw,circle](5)at(6,2){$5$};
	\node [draw,circle](6)at(6,-2){$6$};

	%\node [red] at(6,5) {core};
	%\node [red] at(7.9,0) {leaf};
	%\node [red] at(0.8,2.3) {stem};
	%
	%\draw [dashed, red] (3,3) circle (3);
	
	%\draw (1)--(2);
	
	%\draw (1)to[bend right] (5);
	\draw (2) to (0);
	\draw (2)--(1);
	\draw (2)--(3);
	\draw (2)-- (4);
	\draw (3)-- (4);
	\draw (4)--(5);
	\draw (4)--(6);

	\end{tikzpicture}\\
	\vspace{5mm}	
	$ \mathcal{A}=\lbrace 01234; 234; 23456\rbrace$
\end{center}

\end{minipage}

\vspace{5mm}

\caption{Every possible weakly labeled graph of up to seven vertices.}
\end{table}

%\vspace{5mm}

\newpage

\begin{table}

\tiny

\vspace{5mm}

\begin{minipage}{0.49\textwidth}
\begin{center}
	\begin{tikzpicture}[scale=0.45]

	\node [draw,circle](0)at(0,0){$0$};
	\node [draw,circle](1)at(2,0){$1$};
	\node [draw,circle](2)at(4,0){$2$};
	\node [draw,circle](3)at(6,0){$3$};
	\node [draw,circle](4)at(8,0){$4$};
	\node [draw,circle](5)at(10,0){$5$};
	\node [draw,circle](6)at(12,0){$6$};
	\node [draw,circle](7)at(14,0){$7$};
	
	\draw (1)--(0);
	\draw (1)--(2);
	\draw  (2) to (3);
	\draw  (3) to (4);
	\draw  (4) to (5);
	\draw  (5) to (6);
	\draw  (7) to (6);
	
	\end{tikzpicture}\\
	\vspace{2cm}
	$ \mathcal{A}=\lbrace 012; 123; 234;345;456;567\rbrace$	
\end{center}
\end{minipage}
\;
\begin{minipage}{0.49\textwidth}
\begin{center}
	\begin{tikzpicture}[scale=0.8]
	
	\node [draw,circle](0)at(0,0){$0$};
	\node [draw,circle](4) at(2,0){$3$};
	\node [draw,circle](3)at(4,0){$4$};
	\node [draw,circle](1)at(6,0){$2$};
	\node [draw,circle](6)at(0,2){$7$};
	\node [draw,circle](7)at(0,-2){$1$};
	\node [draw,circle](2)at(6,2){$6$};
	\node [draw,circle](5)at(6,-2){$5$};

	%\draw (1)to[bend right] (5);
	\draw (0) to (4);
	\draw (3) to (4);
	\draw (6) to (4);
	\draw (7) to (4);
	
	\draw (2)--(3);
	\draw (1)--(3);
	\draw (5)--(3);

	\end{tikzpicture}\\
	\vspace{5mm}	
	$ \mathcal{A}=\lbrace 01347; 13457\rbrace$
\end{center}
\end{minipage}\\

\vspace{10mm}

\begin{minipage}{0.49\textwidth}
\begin{center}
	\begin{tikzpicture}[scale=0.8]
	
	\node [draw,circle](0)at(0,0){$0$};
	\node [draw,circle](4) at(2,0){$4$};
	\node [draw,circle](3)at(4,0){$3$};
	\node [draw,circle](1)at(6,0){$1$};
	\node [draw,circle](6)at(0,2){$6$};
	\node [draw,circle](7)at(0,-2){$7$};
	\node [draw,circle](2)at(6,2){$2$};
	\node [draw,circle](5)at(6,-2){$5$};

	%\draw (1)to[bend right] (5);
	\draw (0) to (4);
	\draw (3) to (4);
	\draw (6) to (4);
	\draw (7) to (4);
	
	\draw (2)--(3);
	\draw (1)--(3);
	\draw (5)--(3);

	\end{tikzpicture}\\
	\vspace{5mm}
	$ \mathcal{A}=\lbrace 03467; 12345\rbrace$
\end{center}
\end{minipage}
\;
\begin{minipage}{0.49\textwidth}
\begin{center}
	\begin{tikzpicture}[scale=0.8]
	
	\node [draw,circle](0)at(0,0){$0$};
	\node [draw,circle](4) at(2,0){$3$};
	\node [draw,circle](3)at(4,0){$4$};
	\node [draw,circle](1)at(6,0){$1$};
	\node [draw,circle](6)at(0,2){$6$};
	\node [draw,circle](7)at(0,-2){$2$};
	\node [draw,circle](2)at(6,2){$7$};
	\node [draw,circle](5)at(6,-2){$5$};

	%\draw (1)to[bend right] (5);
	\draw (0) to (4);
	\draw (3) to (4);
	\draw (6) to (4);
	\draw (7) to (4);
	
	\draw (2)--(3);
	\draw (1)--(3);
	\draw (5)--(3);

	\end{tikzpicture}\\
	\vspace{5mm}
	$ \mathcal{A}=\lbrace 02346; 13457\rbrace$	
\end{center}
\end{minipage}\

\vspace{10mm}

\begin{minipage}{0.49\textwidth}
\begin{center}
	\begin{tikzpicture}[scale=0.5]

	\node [draw,circle](3)at(0,0){$3$};
	
	\node [draw,circle](0)at(-3.8,0){$0$};
	\node [draw,circle](5)at(3.8,0){$5$};
	\node [draw,circle](6)at(2,-3){$6$};
	\node [draw,circle](1)at(-2,-3){$1$};
	\node [draw,circle](2)at(-2,3){$2$};
	\node [draw,circle](4)at(2,3){$4$};
	\node [draw,circle](7)at(7.6,0){$7$};
	
	%\node [red] at(6,5) {core};
	%\node [red] at(7.9,0) {leaf};
	%\node [red] at(0.8,2.3) {stem};
	%
	%\draw [dashed, red] (3,3) circle (3);
	
	%\draw (1)--(2);
	\draw  (3) to (0);
	\draw  (3) to (1);
	\draw  (3) to (2);
	\draw  (3) to (4);
	\draw  (3) to (5);
	\draw  (3) to (6);
	\draw  (5) to (7);
	\end{tikzpicture}\\
	\vspace{5mm}
	$ \mathcal{A}=\lbrace 0123456; 357\rbrace$	
\end{center}
\end{minipage}
\;
\begin{minipage}{0.49\textwidth}
\begin{center}

 \begin{tikzpicture}[scale=0.5]

	\node [draw,circle](3)at(0,0){$4$};
	
	\node [draw,circle](0)at(-3.8,0){$7$};
	\node [draw,circle](5)at(3.8,0){$2$};
	\node [draw,circle](6)at(2,-3){$6$};
	\node [draw,circle](1)at(-2,-3){$1$};
	\node [draw,circle](2)at(-2,3){$5$};
	\node [draw,circle](4)at(2,3){$3$};
	\node [draw,circle](7)at(7.6,0){$0$};
	
	%\node [red] at(6,5) {core};
	%\node [red] at(7.9,0) {leaf};
	%\node [red] at(0.8,2.3) {stem};
	%
	%\draw [dashed, red] (3,3) circle (3);
	
	%\draw (1)--(2);
	\draw  (3) to (0);
	\draw  (3) to (1);
	\draw  (3) to (2);
	\draw  (3) to (4);
	\draw  (3) to (5);
	\draw  (3) to (6);
	\draw  (5) to (7);
	\end{tikzpicture}\\
	\vspace{5mm}
	$ \mathcal{A}=\lbrace 1234567; 024\rbrace$	
\end{center}

\end{minipage}

\vspace{5mm}

\caption{Every possible weakly labeled graph of eight vertices.}
\end{table}

%-----------------------------------------------------------

\begin{table}

\tiny

\vspace{5mm}

\begin{minipage}{0.49\textwidth}
\begin{center}
	\begin{tikzpicture}[scale=0.45]

	\node [draw,circle](0)at(0,0){$0$};
	\node [draw,circle](1)at(2,0){$1$};
	\node [draw,circle](2)at(4,0){$2$};
	\node [draw,circle](3)at(6,0){$3$};
	\node [draw,circle](4)at(8,0){$4$};
	\node [draw,circle](5)at(10,0){$5$};
	\node [draw,circle](6)at(12,0){$6$};
	\node [draw,circle](7)at(14,0){$7$};
	\node [draw,circle](8)at(16,0){$8$};
	\draw (1)--(0);
	\draw (1)--(2);
	\draw  (2) to (3);
	\draw  (3) to (4);
	\draw  (4) to (5);
	\draw  (5) to (6);
	\draw  (6) to (7);
	\draw  (7) to (8);
	\end{tikzpicture}\\
	\vspace{1cm}
	$ \mathcal{A}=\lbrace 012; 123; 234;345;456;567;678\rbrace$	
\end{center}
\end{minipage}
\;
\begin{minipage}{0.49\textwidth}
\begin{center}
	\begin{tikzpicture}[scale=0.6]
	
	\node [draw,circle](3)at(0,0){$3$};
	\node [draw,circle](8) at(0,2){$8$};
	\node [draw,circle](0)at(0,-2){$0$};
	\node [draw,circle](1)at(-2,1){$1$};
	\node [draw,circle](2)at(-2,-1){$2$};
	\node [draw,circle](4)at(2,0){$4$};
	\node [draw,circle](5)at(4,0){$5$};
	\node [draw,circle](6)at(6,0){$6$};
	\node [draw,circle](7)at(8,0){$7$};

	%\draw (1)to[bend right] (5);
	\draw (8) to (3);
	\draw (0) to (3);
	\draw (2) to (3);
	\draw (1) to (3);
	
	\draw (3)--(4);
	\draw (4)--(5);
	\draw (5)--(6);
	\draw (6) to (7);

	\end{tikzpicture}\\
	\vspace{5mm}	
	$ \mathcal{A}=\lbrace 012348; 345; 456; 567\rbrace$
\end{center}
\end{minipage}\

\vspace{10mm}

\begin{minipage}{0.49\textwidth}
\begin{center}
	\begin{tikzpicture}[scale=0.6]
	
	\node [draw,circle](4)at(0,0){$4$};
	\node [draw,circle](7) at(0,2){$7$};
	\node [draw,circle](0)at(0,-2){$0$};
	\node [draw,circle](2)at(-2,1){$2$};
	\node [draw,circle](6)at(-2,-1){$6$};
	\node [draw,circle](5)at(2,0){$5$};
	\node [draw,circle](8)at(2,2){$8$};
	\node [draw,circle](3)at(4,0){$3$};
	\node [draw,circle](1)at(6,0){$1$};

	%\draw (1)to[bend right] (5);
	\draw (7) to (4);
	\draw (0) to (4);
	\draw (2) to (4);
	\draw (6) to (4);
	\draw (5) to (4);
	
	\draw (5)--(8);
	\draw (3)--(5);
	\draw (3)--(1);

	\end{tikzpicture}\\
	\vspace{5mm}	
	$ \mathcal{A}=\lbrace 024567; 3458; 135\rbrace$
\end{center}
\end{minipage}
\;
\begin{minipage}{0.49\textwidth}
\begin{center}
	\begin{tikzpicture}[scale=0.6]
	
	\node [draw,circle](5)at(0,0){$5$};
	\node [draw,circle](8) at(0,2){$8$};
	\node [draw,circle](0)at(0,-2){$0$};
	\node [draw,circle](7)at(-2,1){$7$};
	\node [draw,circle](6)at(-2,-1){$6$};
	\node [draw,circle](4)at(2,0){$4$};
	\node [draw,circle](3)at(4,0){$3$};
	\node [draw,circle](2)at(6,0){$2$};
	\node [draw,circle](1)at(8,0){$1$};

	%\draw (1)to[bend right] (5);
	\draw (8) to (5);
	\draw (0) to (5);
	\draw (4) to (5);
	\draw (6) to (5);
	\draw (7) to (5);
	\draw (3)--(4);
	\draw (2)--(3);
	\draw (2)--(1);

	\end{tikzpicture}\\
	\vspace{5mm}	
	$ \mathcal{A}=\lbrace 045678; 345; 456; 123\rbrace$
\end{center}

\end{minipage}

\vspace{10mm}

\begin{minipage}{0.49\textwidth}
\begin{center}
	\begin{tikzpicture}[scale=0.6]
	
	\node [draw,circle](4)at(0,0){$4$};
	\node [draw,circle](1) at(0,2){$1$};
	\node [draw,circle](2)at(0,-2){$2$};
	\node [draw,circle](8)at(-2,1){$8$};
	\node [draw,circle](6)at(-2,-1){$6$};
	\node [draw,circle](3)at(2,0){$3$};
	\node [draw,circle](0)at(2,2){$0$};
	\node [draw,circle](5)at(4,0){$5$};
	\node [draw,circle](7)at(6,0){$7$};

	%\draw (1)to[bend right] (5);
	\draw (1) to (4);
	\draw (3) to (4);
	\draw (2) to (4);
	\draw (6) to (4);
	\draw (8) to (4);
	
	\draw (4)--(3);
	\draw (3)--(0);
	\draw (3)--(5);
	\draw (7)--(5);

	\end{tikzpicture}\\
	\vspace{5mm}	
	$ \mathcal{A}=\lbrace 123468; 0345; 357\rbrace$
\end{center}
\end{minipage}
\;
\begin{minipage}{0.49\textwidth}
\begin{center}
	\begin{tikzpicture}[scale=0.6]
	
	\node [draw,circle](4)at(0,0){$4$};
	\node [draw,circle](6) at(0,2){$6$};
	\node [draw,circle](7)at(0,-2){$7$};
	\node [draw,circle](2)at(-2,1){$2$};
	\node [draw,circle](1)at(-2,-1){$1$};
	\node [draw,circle](5)at(2,-2){$5$};
	\node [draw,circle](3)at(4,0){$3$};
	\node [draw,circle](0)at(4,2){$0$};
	\node [draw,circle](8)at(4,-2){$8$};

	%\draw (1)to[bend right] (5);
	\draw (7) to (4);
	\draw (2) to (4);
	\draw (1) to (4);
	\draw (6) to (4);
	\draw (3) to (4);
	\draw (5)--(4);
	\draw (5)--(3);
	\draw (0)--(3);
	
	\draw (8)--(5);

	\end{tikzpicture}\\
	\vspace{5mm}	
	$ \mathcal{A}=\lbrace 1234567; 0345; 3458\rbrace$
\end{center}

\end{minipage}

\vspace{10mm}

\begin{minipage}{0.49\textwidth}
\begin{center}
	\begin{tikzpicture}[scale=0.55]

	\node [draw,circle](0)at(8,2){$0$};
	\node [draw,circle](1)at(2,0){$1$};
	\node [draw,circle](2)at(4,0){$2$};
	\node [draw,circle](3)at(6,0){$3$};
	\node [draw,circle](4)at(8,0){$4$};
	\node [draw,circle](5)at(10,0){$5$};
	\node [draw,circle](6)at(12,0){$6$};
	\node [draw,circle](7)at(14,0){$7$};
	\node [draw,circle](8)at(8,-2){$8$};

	\draw (4)--(0);
	\draw (1)--(2);
	\draw  (2) to (3);
	\draw  (3) to (4);
	\draw  (4) to (5);
	\draw  (4) to (8);
	\draw  (5) to (6);
	\draw  (7) to (6);
	
	\end{tikzpicture}\\
	\vspace{5mm}
	$ \mathcal{A}=\lbrace  123; 234;03458;456;567\rbrace$	
\end{center}
\end{minipage}
\;
\begin{minipage}{0.49\textwidth}
\begin{center}
	\begin{tikzpicture}[scale=0.6]
	
	\node [draw,circle](0)at(0,0){$0$};
	\node [draw,circle](4) at(2,0){$3$};
	\node [draw,circle](3)at(4,0){$4$};
	\node [draw,circle](1)at(6,0){$6$};
	\node [draw,circle](6)at(0,2){$7$};
	\node [draw,circle](7)at(0,-2){$1$};
	\node [draw,circle](2)at(6,2){$2$};
	\node [draw,circle](5)at(6,-2){$5$};
	\node [draw,circle](8)at(8,0){$8$};

	%\draw (1)to[bend right] (5);
	\draw (0) to (4);
	\draw (3) to (4);
	\draw (6) to (4);
	\draw (7) to (4);
	
	\draw (2)--(3);
	\draw (1)--(3);
	\draw (5)--(3);
	\draw (1)--(8);

	\end{tikzpicture}\\
	\vspace{5mm}	
	$ \mathcal{A}=\lbrace 01347; 23456; 468\rbrace$
\end{center}

\end{minipage}

\vspace{10mm}

\begin{minipage}{0.49\textwidth}
\begin{center}
	\begin{tikzpicture}[scale=0.6]
	
	\node [draw,circle](2)at(-2,0){$2$};
	\node [draw,circle](1) at(-2,2){$1$};
	\node [draw,circle](5)at(-2,-2){$5$};
	\node [draw,circle](3)at(0,0){$3$};
	\node [draw,circle](4)at(2,0){$4$};
	\node [draw,circle](7)at(4,-2){$7$};
	\node [draw,circle](6)at(4,0){$6$};
	\node [draw,circle](0)at(4,2){$0$};
	\node [draw,circle](8)at(6,0){$8$};

	%\draw (1)to[bend right] (5);
	\draw (1) to (3);
	\draw (2) to (3);
	\draw (5) to (3);
	\draw (3) to (3);
	\draw (4)--(3);
	
	\draw (0)--(4);
	\draw (7)--(4);
	\draw (6)--(4);
	\draw (6)--(8);
	
	\end{tikzpicture}\\
	\vspace{5mm}	
	$ \mathcal{A}=\lbrace 12345; 03467; 468\rbrace$
\end{center}
\end{minipage}
\;
\begin{minipage}{0.49\textwidth}
\begin{center}
	\begin{tikzpicture}[scale=0.6]
	
	\node [draw,circle](8)at(-2,0){$8$};
	\node [draw,circle](1) at(-2,2){$1$};
	\node [draw,circle](7)at(-2,-2){$7$};
	\node [draw,circle](5)at(0,0){$5$};
	\node [draw,circle](4)at(2,0){$4$};
	\node [draw,circle](3)at(4,0){$3$};
	\node [draw,circle](6)at(6,2){$6$};
	\node [draw,circle](0)at(6,0){$0$};
	\node [draw,circle](2)at(6,-2){$2$};

	%\draw (1)to[bend right] (5);
	\draw (1) to (5);
	\draw (4) to (5);
	\draw (8) to (5);
	\draw (7) to (5);
	
	\draw (4)--(3);
	
	\draw (0)--(3);
	\draw (2)--(3);
	\draw (6)--(3);

	\end{tikzpicture}\\
	\vspace{5mm}	
	$ \mathcal{A}=\lbrace 14578; 345; 02346\rbrace$
\end{center}

\end{minipage}

\vspace{5mm}

\caption{Every possible weakly labeled graph of nine vertices (table  1 of 3).}
\end{table}

\begin{table}

\tiny

\vspace{5mm}

\begin{minipage}{0.49\textwidth}
\begin{center}
	\begin{tikzpicture}[scale=0.6]
	
	\node [draw,circle](8)at(-2,0){$0$};
	\node [draw,circle](1) at(-2,2){$1$};
	\node [draw,circle](7)at(-2,-2){$7$};
	\node [draw,circle](5)at(0,0){$3$};
	\node [draw,circle](4)at(2,0){$4$};
	\node [draw,circle](3)at(4,0){$5$};
	\node [draw,circle](6)at(6,2){$6$};
	\node [draw,circle](0)at(6,0){$8$};
	\node [draw,circle](2)at(6,-2){$2$};

	%\draw (1)to[bend right] (5);
	\draw (1) to (5);
	\draw (4) to (5);
	\draw (8) to (5);
	\draw (7) to (5);
	
	\draw (4)--(3);
	
	\draw (0)--(3);
	\draw (2)--(3);
	\draw (6)--(3);

	\end{tikzpicture}\\
	\vspace{5mm}	
	$ \mathcal{A}=\lbrace 01347; 345; 2456\rbrace$
\end{center}
\end{minipage}
\;
\begin{minipage}{0.49\textwidth}
\begin{center}
	\begin{tikzpicture}[scale=0.6]
	
	\node [draw,circle](8)at(-2,0){$0$};
	\node [draw,circle](1) at(-2,2){$1$};
	\node [draw,circle](7)at(-2,-2){$3$};
	\node [draw,circle](5)at(0,0){$2$};
	\node [draw,circle](4)at(2,0){$4$};
	\node [draw,circle](3)at(4,0){$6$};
	\node [draw,circle](6)at(6,2){$5$};
	\node [draw,circle](0)at(6,0){$7$};
	\node [draw,circle](2)at(6,-2){$8$};

	%\draw (1)to[bend right] (5);
	\draw (1) to (5);
	\draw (4) to (5);
	\draw (8) to (5);
	\draw (7) to (5);
	
	\draw (4)--(3);
	
	\draw (0)--(3);
	\draw (2)--(3);
	\draw (6)--(3);

	\end{tikzpicture}\\
	\vspace{5mm}	
	$ \mathcal{A}=\lbrace 01234; 246; 45678\rbrace$
\end{center}
\end{minipage}\

\vspace{10mm}

\begin{minipage}{0.49\textwidth}
\begin{center}
	\begin{tikzpicture}[scale=0.6]
	
	\node [draw,circle](7)at(-2,0){$7$};
	\node [draw,circle](8) at(-2,2){$8$};
	\node [draw,circle](1)at(-2,-2){$1$};
	\node [draw,circle](5)at(0,0){$5$};
	\node [draw,circle](4)at(2,0){$4$};
	\node [draw,circle](2)at(4,0){$2$};
	\node [draw,circle](6)at(4,2){$6$};
	\node [draw,circle](3)at(4,-2){$3$};
	\node [draw,circle](0)at(6,0){$0$};

	%\draw (1)to[bend right] (5);
	\draw (1) to (5);
	\draw (4) to (5);
	\draw (8) to (5);
	\draw (7) to (5);
	
	\draw (4)--(3);
	\draw (4)--(2);
	\draw (4)--(6);
	
	\draw (0)--(2);

	\end{tikzpicture}\\
	\vspace{5mm}	
	$ \mathcal{A}=\lbrace 14578; 23456; 024\rbrace$
\end{center}
\end{minipage}
\;
\begin{minipage}{0.49\textwidth}
\begin{center}
	\begin{tikzpicture}[scale=0.6]
	
	\node [draw,circle](7)at(-2,0){$7$};
	\node [draw,circle](8) at(-2,2){$3$};
	\node [draw,circle](1)at(-2,-2){$6$};
	\node [draw,circle](5)at(0,0){$5$};
	\node [draw,circle](4)at(2,0){$4$};
	\node [draw,circle](2)at(4,0){$2$};
	\node [draw,circle](6)at(4,2){$1$};
	\node [draw,circle](3)at(4,-2){$8$};
	\node [draw,circle](0)at(6,0){$0$};

	%\draw (1)to[bend right] (5);
	\draw (1) to (5);
	\draw (4) to (5);
	\draw (8) to (5);
	\draw (7) to (5);
	
	\draw (4)--(3);
	\draw (4)--(2);
	\draw (4)--(6);
	
	\draw (0)--(2);

	\end{tikzpicture}\\
	\vspace{5mm}	
	$ \mathcal{A}=\lbrace 34567; 12458; 024\rbrace$
\end{center}

\end{minipage}

\vspace{10mm}

\begin{minipage}{0.49\textwidth}
\begin{center}
	\begin{tikzpicture}[scale=0.6]
	
	\node [draw,circle](0)at(-2,2){$0$};
	\node [draw,circle](1) at(-2,-2){$1$};
	\node [draw,circle](2)at(0,0){$2$};
	\node [draw,circle](5)at(2,0){$5$};
	\node [draw,circle](8)at(2,-2){$8$};
	\node [draw,circle](6)at(4,1){$6$};
	\node [draw,circle](4)at(4,-1){$4$};
	\node [draw,circle](3)at(6,-2){$3$};
	\node [draw,circle](7)at(6,2){$7$};

	%\draw (1)to[bend right] (5);
	\draw (0) to (2);
	\draw (1) to (2);
	\draw (5) to (2);

	\draw (8) to (5);
	\draw (6) to (5);
	\draw (4)--(5);
	\draw (4)--(3);
	\draw (7)--(6);

	\end{tikzpicture}\\
	\vspace{5mm}	
	$ \mathcal{A}=\lbrace 0125; 24568; 345; 567\rbrace$
\end{center}
\end{minipage}
\;
\begin{minipage}{0.49\textwidth}
\begin{center}
	\begin{tikzpicture}[scale=0.6]
	
	\node [draw,circle](7)at(-2,-2){$7$};
	\node [draw,circle](8) at(-2,2){$8$};
	\node [draw,circle](6)at(0,0){$6$};
	\node [draw,circle](3)at(2,0){$3$};
	\node [draw,circle](0)at(2,2){$0$};
	\node [draw,circle](4)at(4,1){$4$};
	\node [draw,circle](5)at(6,2){$5$};
	\node [draw,circle](2)at(4,-1){$2$};
	\node [draw,circle](1)at(6,-2){$1$};

	%\draw (1)to[bend right] (5);
	\draw (3) to (6);
	\draw (8) to (6);
	\draw (7) to (6);
	
	\draw (0)--(3);
	\draw (3)--(2);
	\draw (4)--(3);
	\draw (4)--(5);
	\draw (1)--(2);

	\end{tikzpicture}\\
	\vspace{5mm}	
	$ \mathcal{A}=\lbrace 3678; 02346; 123; 345\rbrace$
\end{center}

\end{minipage}

\vspace{10mm}

\begin{minipage}{0.49\textwidth}
\begin{center}
	\begin{tikzpicture}[scale=0.6]
	
	\node [draw,circle](5)at(-2,0){$5$};
	\node [draw,circle](1) at(-2,2){$1$};
	\node [draw,circle](2)at(-2,-2){$2$};
	\node [draw,circle](3)at(0,0){$3$};
	\node [draw,circle](4)at(0,2){$4$};
	\node [draw,circle](0)at(0,-2){$0$};
	\node [draw,circle](6)at(2,0){$6$};
	\node [draw,circle](8)at(4,2){$8$};
	\node [draw,circle](7)at(4,-2){$7$};

	%\draw (1)to[bend right] (5);
	\draw (0) to (3);
	\draw (1) to (3);
	\draw (2) to (3);
	\draw (4) to (3);
	\draw (5) to (3);
	\draw (6) to (3);
	
	\draw (8) to (6);
	\draw (6) to (7);

	\end{tikzpicture}\\
	\vspace{5mm}	
	$ \mathcal{A}=\lbrace 0123456; 3678\rbrace$
\end{center}
\end{minipage}
\;
\begin{minipage}{0.49\textwidth}
\begin{center}
	\begin{tikzpicture}[scale=0.6]
	
	\node [draw,circle](8)at(-2,0){$8$};
	\node [draw,circle](3) at(-2,2){$3$};
	\node [draw,circle](4)at(-2,-2){$4$};
	\node [draw,circle](6)at(0,2){$6$};
	\node [draw,circle](5)at(0,0){$5$};
	\node [draw,circle](7)at(0,-2){$7$};
	\node [draw,circle](2)at(2,0){$2$};
	\node [draw,circle](1)at(4,2){$1$};
	\node [draw,circle](0)at(4,-2){$0$};

	%\draw (1)to[bend right] (5);
	\draw (2) to (5);
	\draw (3) to (5);
	\draw (4) to (5);
	\draw (6) to (5);
	\draw (7) to (5);
	\draw (8) to (5);
	
	\draw (0)--(2);
	\draw (1)--(2);

	\end{tikzpicture}\\
	\vspace{5mm}	
	$ \mathcal{A}=\lbrace 0125; 2345678\rbrace$
\end{center}

\end{minipage}

\vspace{10mm}

\begin{minipage}{0.49\textwidth}
\begin{center}
	\begin{tikzpicture}[scale=0.6]
	
	\node [draw,circle](0)at(0,0){$0$};
	\node [draw,circle](2) at(2,0){$2$};
	\node [draw,circle](4)at(4,0){$4$};
	\node [draw,circle](3)at(3,2){$3$};
	\node [draw,circle](5)at(5,2){$5$};
	\node [draw,circle](1)at(3,-2){$1$};
	\node [draw,circle](7)at(5,-2){$7$};
	\node [draw,circle](6)at(6,0){$6$};
	\node [draw,circle](8)at(8,0){$8$};

	%\draw (1)to[bend right] (5);
	\draw (0) to (2);
	
	\draw (1) to (4);
	\draw (2) to (4);
	\draw (3) to (4);
	\draw (5) to (4);
	\draw (6) to (4);
	\draw (7) to (4);
	
	\draw (8) to (6);

	\end{tikzpicture}\\
	\vspace{5mm}	
	$ \mathcal{A}=\lbrace 024; 1234567; 468\rbrace$
\end{center}
\end{minipage}
\;
\begin{minipage}{0.49\textwidth}
\begin{center}
	\begin{tikzpicture}[scale=0.6]
	
	\node [draw,circle](0)at(0,0){$2$};
	\node [draw,circle](2) at(2,0){$3$};
	\node [draw,circle](4)at(4,0){$4$};
	\node [draw,circle](3)at(3,2){$0$};
	\node [draw,circle](5)at(5,2){$1$};
	\node [draw,circle](1)at(3,-2){$8$};
	\node [draw,circle](7)at(5,-2){$7$};
	\node [draw,circle](6)at(6,0){$5$};
	\node [draw,circle](8)at(8,0){$6$};

	%\draw (1)to[bend right] (5);
	\draw (0) to (2);
	
	\draw (1) to (4);
	\draw (2) to (4);
	\draw (3) to (4);
	\draw (5) to (4);
	\draw (6) to (4);
	\draw (7) to (4);
	
	\draw (8) to (6);

	\end{tikzpicture}\\
	\vspace{5mm}	
	$ \mathcal{A}=\lbrace 234; 0134578; 456\rbrace$
\end{center}

\end{minipage}

\vspace{5mm}

\caption{Every possible weakly labeled graph of nine vertices (table 2 of 3).}
\end{table}

\newpage

\begin{table}[h]

\tiny

\vspace{5mm}

\begin{minipage}{0.49\textwidth}
\begin{center}
	\begin{tikzpicture}[scale=0.6]
	
	\node [draw,circle](4)at(0,0){$4$};
	
	\node [draw,circle](0) at(-2,0){$0$};
	\node [draw,circle](1)at(-2,2){$1$};
	\node [draw,circle](8)at(-2,-2){$8$};
	\node [draw,circle](2)at(0,2){$2$};
	\node [draw,circle](7)at(0,-2){$7$};
	\node [draw,circle](3)at(2,2){$3$};
	\node [draw,circle](6)at(2,-2){$6$};
	\node [draw,circle](5)at(2,0){$5$};

	%\draw (1)to[bend right] (5);
	\draw (0) to (4);
	
	\draw (1) to (4);
	\draw (2) to (4);
	\draw (3) to (4);
	\draw (5) to (4);
	\draw (6) to (4);
	\draw (7) to (4);
	
	\draw (8) to (4);

	\end{tikzpicture}\\
	\vspace{5mm}	
	$ \mathcal{A}=\lbrace 012345678\rbrace$
\end{center}
\end{minipage}
\;
\begin{minipage}{0.49\textwidth}
\begin{center}
	\begin{tikzpicture}[scale=0.6]
	
	\node [draw,circle](6)at(-2,2){$6$};
	\node [draw,circle](5) at(0,0){$5$};
	\node [draw,circle](4)at(4,0){$4$};
	\node [draw,circle](0)at(6,2){$0$};
	
	\node [draw,circle](7)at(-2,-2){$7$};
	\node [draw,circle](8)at(6,-2){$8$};
	
	\node [draw,circle](3)at(2,2){$3$};
	\node [draw,circle](1)at(0,4){$1$};
	\node [draw,circle](2)at(4,4){$2$};

	%\draw (1)to[bend right] (5);

	\draw (1) to (3);
	\draw (2) to (3);
	\draw (4) to (3);
	\draw (5) to (3);
	
	\draw (4) to (5);
	\draw (6) to (5);
	\draw (7) to (5);
	
	\draw (8) to (4);
	\draw (0) to (4);

	\end{tikzpicture}\\
	\vspace{5mm}	
	$ \mathcal{A}=\lbrace 12345; 03458; 34567\rbrace$
\end{center}
\end{minipage}\\

\vspace{5mm}

\begin{center}
	\begin{tikzpicture}[scale=0.6]
	
	\node [draw,circle](7)at(-2,2){$7$};
	\node [draw,circle](8) at(-2,-2){$8$};
	\node [draw,circle](6)at(0,0){$6$};
	\node [draw,circle](5)at(2,-2){$5$};
	\node [draw,circle](4)at(4,0){$4$};
	\node [draw,circle](3)at(6,2){$3$};
	\node [draw,circle](2)at(8,0){$2$};
	\node [draw,circle](0)at(10,2){$0$};
	\node [draw,circle](1)at(10,-2){$1$};

	%\draw (1)to[bend right] (5);
	\draw (4) to (6);
	\draw (5) to (6);
	\draw (7) to (6);
	\draw (8) to (6);

	\draw (2) to (4);
	\draw (3) to (4);
	\draw (5) to (4);
	
	\draw (0) to (2);
	\draw (1) to (2);
	
	\draw (3) to (2);

	\end{tikzpicture}\\
	\vspace{5mm}	
	$ \mathcal{A}=\lbrace 01234; 234; 23456; 456; 45678 \rbrace$
\end{center}

\vspace{5mm}

\caption{Every possible weakly labeled graph of nine vertices (table 3 of 3).}
\end{table}

\end{document}